\def\english{\selectlanguage{english}}
\providecommand\mathbb{\bf}
\newcommand\R{{\mathbb R}}
\newcommand\Z{{\mathbb Z}}
\newcommand\T{{\mathbb T}}
\newtheorem{thm}{Theorem}[section]
\newtheorem{lemma}{Lemma}[section]
\newtheorem{pro}{Proposition}[section]
\newtheorem{defi}{Definition}[section]
\newtheorem{remark}{Remark}[section]
\newcounter{Remark}
\renewcommand\theRemark{\arabic{Remark}}
\newcounter{steps}
\newenvironment{proof}[1][]{%
\par\medbreak\setcounter{steps}{0}
{\noindent\bfseries Proof#1. }} {\hfill\fbox{\ }\medbreak}
\newcounter{substeps}[steps]
\newcommand{\rot}[0]{
\mathrm{rot}}
\newcommand{\marmd}[0]{\mathrm{d}}
\newcommand{\calE}[0]{
\mathcal{E}}
\newcommand{\calR}[0]{
\mathcal{R}}
\newcommand{\calV}[0]{
\mathcal{V}}
\newcommand{\calF}[0]{
\mathcal{F}}
\newcommand{\intxt}[1]{
\int _{\R ^3} \!#1 \;\mathrm{d}x}
\newcommand{\intvt}[1]{
\int _{\R ^3} \!#1 \;\mathrm{d}v}
\newcommand{\intvxt}[1]{
\int _{\R ^3} \int _{\R ^3} \!#1 \;\mathrm{d}v \mathrm{d}x}
\newcommand{\Divx}[0]{
\mathrm{div}_x}
\newcommand{\Divv}[0]{
\mathrm{div}_v}
\newcommand{\calM}[0]{
\mathcal{M}}
\newcommand{\eps}[0]{
\varepsilon}
\newcommand{\calO}[0]{
\mathcal{O}}
\begin{document}
\english

\title{Long time behavior for collisional strongly magnetized plasma in three space dimensions}

\author{
Miha\"i BOSTAN \thanks{Aix Marseille Universit\'e, CNRS, Centrale Marseille, Institut de Math\'ematiques de Marseille, UMR 7373, Ch\^ateau Gombert 39 rue F. Joliot Curie, 13453 Marseille FRANCE. E-mail : {\tt mihai.bostan@univ-amu.fr}},
Anh-Tuan VU \thanks{Faculty of Mathematics and Informations, Hanoi University of Science and Technology, 1 Dai Co Viet, Hai Ba Trung, Hanoi, Vietnam. E-mail : {\tt tuan.vuanh1@hust.edu.vn}}
}

\date{(\today)}

\maketitle

\begin{abstract}
We consider the long time evolution of a population of charged particles, under strong magnetic fields and collision mechanisms. We derive a fluid model and justify the asymptotic behavior toward regular solutions of this regime. In three space dimensions, a constraint occurs along the parallel direction. For eliminating the corresponding Lagrange multiplier, we average along the magnetic lines.

\end{abstract}

\paragraph{Keywords:} Long time behavior, Strongly magnetized plasmas, Relative entropy.

\paragraph{AMS classification:} 35Q75, 78A35, 82D10.
\\
\\

\section{Introduction}
\label{Intro}
A plasma is a gas that is significantly ionized (through heating or photoionization), consisting of electrons and ions. One of the main applications in plasma physics concerns the energy production through thermo-nuclear fusion. The controlled fusion is achieved by magnetic confinement, where the plasma is confined into toroidal devices called Tokamaks, under the action of strong magnetic fields $\textbf{B}$. This confinement occurs because the radius of the circular motion of charged particles around the magnetic filed lines, known as the Larmor radius $r_L$, is proportional to the inverse of the strength $|\textbf{B}|$ of the magnetic field, i.e., $r_L = mv/|q\textbf{B}|$. Here, $m$ is the particle mass, $q$ is the particle charge and $v$ is the velocity in the plane perpendicular to the magnetic field lines. In Tokamaks, the plasma is heated to extremely high temperatures. As the collision frequency decreases with increasing temperature, fusion plasmas enter a weakly collisional regime. The mathematical modelling useful of such plasma confinement is based on a kinetic framework, which are mesoscopic descriptions for the electrons and ions, coupled with Poisson’s equation or Maxwell’s equation for the computation of the electrostatic or electromagntic fields, respectively.

In this paper, we study a class of kinetic-type models for fusion plasmas, typically represented by Vlasov-type equations that include self-consistent electric forces in strongly confined tokamak plasma, as well as in weakly collisional plasma. Direct simulations of the kinetic system of Vlasov equations are fairly expensive from a numerical point of view due to the high dimensionality of the phase space. Additionally, the presence of strong magnetic fields introduces a small time scale, namely the rotation period of particles around the magnetic lines (known as the cyclotronic period $T_c = 2\pi m/|q\textbf{B}|$). This requires  very small time steps for the numerical resolutions. In most industrial applications, it is necessary to reduce the dimensionality of the model, eliminating the unnecessary fast dynamics while retaining the complete low-frequency physics. Therefore, deriving an approximate model that is numerically less expensive is crucial. Large magnetic fields usually lead to the so-called gyro-kinetic or drift-kinetic limits (see \cite{LeeGyro1983, LittHam1981} for physics references and \cite{BosAsyAna, Bre2000} for mathematical results), which consists in the asymptotic behavior of the charged particles dynamics under slowly varying magnetic fields on the typical gyroradius length.

Here, we derive a new asymptotic model under the assumptions of a large magnetic field and a large-time asymptotic limit for the three dimensional Vlasov-Poisson-Fokker-Planck system. This problem is relevant from the physical point of view, particularly for modeling tokamak plasmas, and has been studied by many mathematicians. Let us summarize the previously known results on this topic. In the case of uniform constant magnetic fields, F. Golse and L. Saint-Raymond have  carefully studied this problem for the Vlasov-Poisson system in two dimensions \cite{GolSaintMag1999, Saint2002} (obtained when one restricted to the perpendicular dynamics). For non-constant magnetic fields (i.e., magnetic fields with constant direction but varying amplitude), P. Degond and F. Filbet, in \cite{DegFil16} have formally derived the asymptotic limit of the three dimensional Vlasov-Poisson system. Recently, in \cite{BosTuan}, M. Bostan and A-T. Vu have been provided a rigorous mathematical justification of the obtained asymptotic model for the Vlasov-Poisson-Fokker-Planck system in two dimensions.

In all the above references, the magnetic field is always assumed to have a constant direction (i.e., the curvature direction of the magnetic field lines is neglected), and the question remains fully open in the general case of the magnetic field. Our goal in this work is to provide a rigorous derivation for certain classes of non-trivial magnetic fields, using a method developed in \cite{BosTuan}. The main distinction of this paper, compared with \cite{BosTuan}, is that in the three dimensional setting with curved magnetic fields, the analysis is much more difficult due to the combination of parallel and perpendicular dynamics, as well as the curvature effects. This introduces additional constraints that must be addressed along the parallel direction of the magnetic field lines.

\section{Setting of the problem and main results}
\subsection{The physical model and Scaling}

We consider a one-species plasma interacting both through the action of the self-consistent electrostatic field and through collisions, in the presence of an external magnetic field. Let $f=f(t,x,v)$ be the density distribution function of charged particles of mass $m$, charge $q$, depending on time $t\in\R_+$, position $x\in\R^3$, and velocity $v\in\R^3$. The evolution of the plasma is governed by the Vlasov-Poisson-Fokker-Planck (referred to as
VPFP for simplicity of presentation) system, with external magnetic field:
\begin{equation}
\label{Non_scale_VPFP}
\partial_t f + v\cdot\nabla_x f + \dfrac{q}{m}\left( E + v\wedge \textbf{B}_{\text{ext}} \right)\cdot\nabla_v f = \mathcal{Q}(f),\,\, (t,x,v)\in\R_+\times\R^3\times\R^3.
\end{equation}
The self-consistent electric field $E(t,x) = -\nabla_x \Phi(t,x)$ and the electric potential $\Phi(t,x)$ will then satisfy the Poisson equation
\[
-\epsilon_0 \Delta_x \Phi(t,x)  =  q n(t,x)=q \int_{\R^3}{f(t,x,v)}\mathrm{d}v,
\]
where the constant $\epsilon_0$ represents the electric permittivity of the vacuum and $n(t,x)$ denotes the particle concentration. The external magnetic field $\textbf{B}_{\text{ext}}$ is of the form
\[
\textbf{B}_{\text{ext}}(x) = B(x) e(x),\quad \Divx(Be) =0, \quad |e(x) | =1, \quad x\in\R^3,
\]
for some scalar positive function $B(x)$ and some field of unitar vector $e(x)$. We assume that $B$, $e$ are smooth.
In the equation \eqref{Non_scale_VPFP}, the operator $\mathcal{Q}(f)$ is the linear Fokker-Planck operator acting on the velocities, which accounts for friction and diffusion effects, i.e.,
\[
\mathcal{Q}(f) = \dfrac{1}{\tau}\Divv{\{\sigma \nabla_v f + vf\}},
\]
where $\tau$ is the relaxation time and $\sigma$ is the velocity diffusion, see \cite{Cha1949} for the introduction of this operator, based on the principle of Brownian motion. The problem is supplemented with the following initial data:
\[
f(0,x,v) = f_{\mathrm{in}}(x,v),\,\,(x,v)\in\R^3\times\R^3.
\]

We are now interested in finding the asymptotic limit of the VPFP system \eqref{Non_scale_VPFP} describing the long-time dynamics of charged particles when they are submitted to a large magnetic field, in order to observe the drift phenomenon in the directions that run orthogonal to the magnetic field. Indeed, it is well known that the velocities of electric cross-field drift, the magnetic gradient drift, and the magnetic curvature drift are proportional to $\frac{1}{B}$ and, consequently, that
it is necessary to observe the drift movements over a large period of time that is proportional to $B$; see \cite{GolSaintMag1999, BosTuan, BosGuiCen3D, Bos2020}. Namely, we consider the following
\[
B^\eps (x)  = \dfrac{B(x)}{\eps},\quad f(t,x,v) = f^\eps(\tilde t,x,v),\quad t = \eps \tilde{t}.
\]
Here, $\eps >0$ is a small parameter, related to the ratio between the cyclotronic period  and the observation time scale. Hence, $\partial_t f = \eps \partial_{\tilde t} f^\eps$. Then, in \eqref{Non_scale_VPFP}, the
term $\partial_t$ is to be replaced by $\eps \partial_{\tilde t}$ or by $\eps \partial_t$ to simplify our notation; the VPFP system given by \eqref{Non_scale_VPFP} becomes:
\begin{equation}
\label{equ:VPFP-Scale}
\eps \partial_t f^\eps + v\cdot \nabla_x f^\eps + \dfrac{q}{m}E[f^\eps]\cdot \nabla_v f^\eps + \dfrac{q}{m}(v\wedge B^\eps(x)e(x))\cdot\nabla_v f^\eps = \dfrac{1}{\tau}\Divv (\sigma \nabla_v f^\eps + v f^\eps),
\end{equation}
\begin{equation}
\label{equ:PoissonEpsi}
E[f^\eps] = - \nabla_x \Phi[f^\eps],\quad -\epsilon_0\Delta_x\Phi[f^\eps] =q \, n[f^\eps]= q \int_{\R^3}{f^\eps}\mathrm{d}v.
\end{equation}
We complete the above system by applying the following initial condition,
\begin{equation}
\label{equ:Initial}
f^\eps(0,x,v) = f^\eps_{\mathrm{in}}(x,v),\,\, (x,v)\in \R^3\times\R^3.
\end{equation}

There are many works dealing with the existence and uniqueness of solutions to the VPFP system, in the three dimensional setting. For the existence of weak solutions for the VPFP problem \eqref{equ:VPFP-Scale}, \eqref{equ:PoissonEpsi}, and \eqref{equ:Initial} we refer to \cite{CarSol1995, Vic1991}. Existence and uniqueness results for strong solutions of the VPFP problem can be found in \cite{Bou1993, Bou1995, Deg1986, ODwVic1990, ReinWeckler1990}.

\subsection{Main results}
In this subsection, we focus on the main result concerning the hydrodynamic convergence. Specifically, we present a rigorous mathematical study of the formal analysis in the three-dimensional setting with general magnetic fields, assuming the existence of regular solutions to the limit model. For the case of non-trivial magnetic fields relevant to tokamaks, we enhance the result by demonstrating the well-posedness of the limit system.

We briefly formalize the study of the asymptotic behavior of the solutions, denoted by $(f^\eps)_{\eps>0}$, to the problem given by \eqref{equ:VPFP-Scale}, \eqref{equ:PoissonEpsi}, and \eqref{equ:Initial}, as $\eps$ tends to  zero. By analysing the balance of the free energy functional associated with the VPFP system, we formally conclude that the limit distribution function $f$ of the family $(f^\eps)_{\eps >0}$, as $\eps \searrow 0$, is an equilibrium of the form of a local Maxwellian distribution in velocity, parametrized by macroscopic quantities (particle concentration), for any $(t,x)\in \R_+ \times \R^3$, i.e.,
\[
f(t,x,v) = n(t,x)M(v)= n(t,x)\dfrac{e^{-|v|^2/2\sigma}}{(2\pi\sigma)^{3/2}},\,\,(t,x,v)\in \R_+ \times \R^3 \times \R^3.
\]
The concentration $n(t,x)$ satisfies the following transport equation with a constraint
\begin{equation}
\label{equ:gyro-kinetic}
\partial_t n + \Divx\left( \dfrac{n e}{\omega_c}\wedge \nabla_x k[n]\right) + Be\cdot\nabla_x p =0,\,\, (t,x)\in \R_+ \times \R^3,
\end{equation}
\begin{equation}
\label{equ:constraint}
B e\cdot \nabla_x k[n] = 0,\quad k[n] = \sigma (1+\ln n) + \dfrac{q}{m}\Phi[n],
\end{equation}
coupled to the Poisson equation
\begin{equation}
\label{equ:PoissonLimit}
E[n] = -\nabla_x \Phi[n],\quad -\epsilon_0 \Delta_x \Phi[n] = q n,
\end{equation}
with initial condition 
\[
n(0,x) = n_{\mathrm{in}} (x) =  \int_{\R^3}{f(0,x,v)}\mathrm{d}v.
\]
Here, $ p$ is thought as a Lagrange multiplier associated to the constraint \eqref{equ:constraint}, and $\omega_c(x) = \frac{qB(x)}{m} $ represents the cyclotron frequency. The formal derivation of the limit system is presented in more detail in Section \ref{ForDerLimMod}. Furthermore, Proposition \ref{Equiv_form} show that the concentration $n$ is advected along the electric cross-field drift $\frac{E\wedge e}{B}$, magnetic gradient drift $-\frac{\sigma \nabla \omega_c \wedge e}{\omega_c ^2}$, and magnetic curvature drift $-\frac{\sigma\partial_x e e \wedge e}{\omega_c}$. The limit model obtained in the three dimensional framework is significantly much more complex than the two-dimensional one (see \cite{BosTuan}), as it requires handling additional constraints in the parallel direction of the magnetic field lines. To eliminate the constraint and determine $p$, we derive an equivalent formulation of the limiting model via averaging along the characteristic flow generated by the magnetic field. Such methods have been employed in numerous papers previously, e.g. references \cite{BogMit61, BosTraEquSin, BosAsyAna, BosGuiCen3D, BosFinHauCRAS, BosFin16, BosSIAM09}. The result is a reduced partial differential equation (PDE) for the average of the particle concentration $n(t,x)$, as explained in Section \ref{RefLiMod}. Moreover, for a special case of magnetic field relevant to tokamaks, such as the cylindrical magnetic field (see \cite{Negu}), the reduced limit system is further simplified in Section \ref{AngVectFields}.

In the present work, the convergence of the VPFP system \eqref{equ:VPFP-Scale}, \eqref{equ:PoissonEpsi}, and \eqref{equ:Initial} towards the limit model \eqref{equ:gyro-kinetic}, \eqref{equ:constraint}, and \eqref{equ:PoissonLimit}  will be investigated by appealing to the relative entropy or modulated energy method, as introduced in \cite{Yau1991}. To the best of our knowledge, to date, there has been no result on the asymptotic
regime in the three dimensional setting with the non-trivial magnetic fields. This technique yields strong converges, provided that the solution of the limit system is regular and that  the initial data converge appropriately. Many asymptotic regimes were obtained using this technique, see \cite{Bre2000, BreMauPue2003, GolSaintQuas2003, PueSaint2004} for quasineutral regimes in collisionless plasma physics, \cite{Saint2003, BerVas2005} for hydrodynamic limits in gaz dynamics, \cite{GouJabVas2004} for fluid-particle interaction, \cite{BosGou08, Bos2007} for high electric or magnetic field limits in plasma physics. 

Before writing our main result, we define the modulated energy $\calE[n^\eps(t)|n(t)]$ by
\[
\calE[n^\eps(t)|n(t)] = \sigma \int_{\R^3}{n(t) h\left( \dfrac{n^\eps(t)}{n(t)}\right)}\mathrm{d}x + \dfrac{\epsilon_0}{2m}\int_{\R^3}{|\nabla_x \Phi[n^\eps] - \nabla_x \Phi[n]|^2}\mathrm{d}x,
\]
where $h: \R_+ \to \R_+$ is the convex function defined by $h(s) = s\ln s -s +1$, $s\in \R_+$. This quantity splits into the standard $L^2$ norm of the electric field plus the relative entropy between the particle density $n^\eps$ of \eqref{equ:VPFP-Scale}, \eqref{equ:PoissonEpsi}, and \eqref{equ:Initial} and the particle concentration $n$ of the limit model \eqref{equ:gyro-kinetic}, \eqref{equ:constraint}, and \eqref{equ:PoissonLimit}. For any nonnegative integer $k$ and $p \in [1, \infty]$, $W^{ k,p} = W^{ k,p} (\R^d )$ stands for the $k$-th order $L^p$ Sobolev space. $C_b^k$ stands for $k$ times continuously differentiable functions, whose partial derivatives, up to order $k$, are all bounded and $C^k ([0, T ]; E)$ is the set of $k$-times continuously differentiable functions from an interval $[0, T] \subset \R$ into a Banach space $E$. $L^p(0, T ; E)$ is the set of measurable functions from an interval $(0, T )$ to a Banach space $E$, whose $p$-th power of the $E$-norm is Lebesgue measurable. The main result of this paper is the following
\begin{thm}
\label{MainThm}
Let $B$ be a smooth magnetic field, such that $\inf_{x\in\R^3}B(x)=B_0 >0$. 
Assume that the initial particle densities $(f^\eps_{\mathrm{in}})_{\eps>0}$ satisfy $f^\eps_{\mathrm{in}}\geq 0$, $M_{\mathrm{in}}:=\sup_{\eps>0}M^\eps_{\mathrm{in}}<+\infty$, $U_{\mathrm{in}}:=\sup_{\eps>0}U^\eps_{\mathrm{in}}<+\infty$ where
\[
M^\eps _{\mathrm{in}} := \int_{\R^3}\int_{\R^3}{f^\eps _{\mathrm{in}} (x,v)}\mathrm{d}v\mathrm{d}x,\quad U^\eps _{\mathrm{in}} := \int_{\R^3}\int_{\R^3}{\dfrac{|v|^2}{2}f^\eps _{\mathrm{in}} (x,v)}\mathrm{d}v\mathrm{d}x + \dfrac{\epsilon_0}{2m}\int_{\R^3}{|\nabla_x \Phi[f^\eps _{\mathrm{in}}]|^2}\mathrm{d}x.
\]
Let $T>0$. We denote by $f^\eps$ the weak solutions of \eqref{equ:VPFP-Scale}, \eqref{equ:PoissonEpsi}, and \eqref{equ:Initial} with the initial data $f^\eps_\mathrm{in}$  on $[0,T]$ provided by Theorem \ref{Weaksol3D}. We assume that $n$ is a non-negative regular solution of \eqref{equ:gyro-kinetic}, \eqref{equ:constraint}, and \eqref{equ:PoissonLimit} on $[0,T]$ such that $W[n] = \frac{e}{\omega_c}\wedge \nabla_x k[n] + \frac{pBe}{n}$ belongs to $W^{1,\infty}((0,T)\times\R^3)$ with the initial concentration $n_\mathrm{in}$ verifying $n_{\mathrm{in}}\geq 0$, $n_{\mathrm{in}} \in L^{1}(\R^3)\cap W^{1,\infty}(\R^3)$, and $k[n_{\mathrm{in}}]\in \mathrm{ker}(Be\cdot\nabla_x)$. Then, we have the following inequality for $0<\eps<1$ and $t\leq T$:
\begin{equation*}
\begin{split}
\calE[n^\eps(t)|n(t)] +&  \sigma\int_{\R^3}\int_{\R^3}{n^\eps(t) M h\left(\dfrac{f^\eps(t)}{n^\eps(t) M} \right)}\mathrm{d}v\mathrm{d}x + \dfrac{1}{2\eps\tau}\int_0^t\int_{\R^3}\int_{\R^3}{\dfrac{|\sigma \nabla_v f^\eps + v f^\eps|^2}{f^\eps}}\mathrm{d}v\mathrm{d}x\mathrm{d}s\\
&\leq  \left[\calE[n^\eps_\mathrm{in}|n_\mathrm{in}] +  \sigma\int_{\R^3}\int_{\R^3}{n^\eps_\mathrm{in} M(v) h\left(\dfrac{f^\eps_\mathrm{in}}{n^\eps_\mathrm{in} M(v)} \right)}\mathrm{d}v\mathrm{d}x + C \sqrt{\eps}\right]e^{C t}.
\end{split}
\end{equation*}
where $n^\eps_{\mathrm{in}} = \intvt{f^\eps_{\mathrm{in}}}$, and $C>0$ is a constant that is independent of $\eps$.
In particular, if
\[
\lim_{\eps\searrow 0}\sigma \int_{\R^3}\int_{\R^3}{n^\eps_{\mathrm{in}}M(v) h\left(\dfrac{f^\eps_{\mathrm{in}}}{n^\eps_{\mathrm{in}}M(v)} \right)}\mathrm{d}v\mathrm{d}x =0,\quad \lim_{\eps\searrow 0}\calE[n^\eps_{\mathrm{in}}|n_{\mathrm{in}}] =0 ,
\]
then we obtain 
\[
\lim_{\eps\searrow 0} \sup_{0 \leq t \leq T} \sigma \int_{\R^3}\int_{\R^3}{n^\eps(t)M(v)h\left(\dfrac{f^\eps}{n^\eps M} \right)}\mathrm{d}v\mathrm{d}x = 0,\quad \lim_{\eps\searrow 0} \sup_{0 \leq t \leq T}\calE[n^\eps(t)|n(t)] =0,
\]
\[
\lim_{\eps\searrow 0}\dfrac{1}{\eps \tau}\int_0^T\int_{\R^3}\int_{\R^3}{\dfrac{|\sigma \nabla_v f^\eps + v f^\eps |^2}{f^\eps}} \mathrm{d}v\mathrm{d}x\mathrm{d}t =0.
\]
Moreover, we have the convergences $\lim_{\eps\searrow 0} f^\eps = nM$ in $L^\infty (0,T;L^1(\R^3\times \R^3))$, and $\lim_{\eps\searrow 0} \nabla_x \Phi[f^\eps] = \nabla_x\Phi[n]$ in $L^\infty(0,T;L^2(\R^3))$.
\end{thm}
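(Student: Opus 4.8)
The plan is to run the relative entropy (modulated energy) method in the spirit of Yau, comparing the weak solution $f^\eps$ of the scaled VPFP system with the local Maxwellian $nM$ built from the regular solution $n$ of the limit model. The central object is the combined functional
\[
\mathcal{H}^\eps(t) := \calE[n^\eps(t)|n(t)] + \sigma\intxt{}\intvt{}\, n^\eps(t) M\, h\!\left(\frac{f^\eps(t)}{n^\eps(t) M}\right),
\]
and the aim is a Gronwall inequality $\frac{\md}{\md t}\mathcal{H}^\eps \leq C\,\mathcal{H}^\eps + C\sqrt{\eps}$, plus the dissipation term $\frac{1}{2\eps\tau}\int\!\!\int |\sigma\nabla_v f^\eps + v f^\eps|^2/f^\eps$ appearing on the left-hand side as a nonnegative leftover. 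First I would split $\mathcal{H}^\eps$ into the purely kinetic relative entropy of $f^\eps$ with respect to $n^\eps M$ (which measures how far $f^\eps$ is from its own local Maxwellian) and the macroscopic modulated energy $\calE[n^\eps|n]$ (relative entropy of densities plus the $L^2$ electric field difference). The kinetic part is controlled directly by the entropy dissipation estimate available for the Fokker–Planck operator: expanding $\frac{\md}{\md t}\intxt{}\intvt{} f^\eps \ln(f^\eps/(n^\eps M))$ using \eqref{equ:VPFP-Scale}, the magnetic term drops (it is in the kernel of the relevant operator and the Maxwellian is isotropic in $v$), the transport and force terms reorganize into a flux of $n^\eps$, and the collision term produces exactly $-\frac{1}{\eps\tau}\int\!\!\int |\sigma\nabla_v f^\eps + v f^\eps|^2/f^\eps$, up to an $O(\eps)$ time-derivative term that must be absorbed.

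The core of the argument is the macroscopic part. I would compute $\frac{\md}{\md t}\calE[n^\eps(t)|n(t)]$ by writing $\calE$ as $\sigma\intxt{}\big(n^\eps\ln n^\eps - n^\eps\ln n - n^\eps + n\big) + \frac{\epsilon_0}{2m}\|\nabla_x\Phi[n^\eps]-\nabla_x\Phi[n]\|_{L^2}^2$. The continuity equation for $n^\eps$ is obtained by integrating \eqref{equ:VPFP-Scale} in $v$: $\partial_t n^\eps + \Divx j^\eps = 0$ with a current $j^\eps = \frac{1}{\eps}\intvt{} v f^\eps$; here one must use the structure of the magnetic drift to identify $j^\eps$ modulo the small dissipation. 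Differentiating and inserting the limit equations \eqref{equ:gyro-kinetic}–\eqref{equ:constraint} together with the field $W[n]$, the leading-order terms cancel by the definition of $k[n]$ and the constraint $Be\cdot\nabla_x k[n]=0$ — this is precisely where the parallel constraint and its Lagrange multiplier $p$ are absorbed into the comparison flux $W[n]\in W^{1,\infty}$. The quadratic remainders are bounded by $C\,\calE[n^\eps|n]$ using the $W^{1,\infty}$ regularity of $W[n]$, $n$, and the hypothesis $k[n_{\mathrm{in}}]\in\ker(Be\cdot\nabla_x)$ (which by the structure of the limit model propagates in time). The electric field difference is handled as usual: $\frac{\md}{\md t}\frac{\epsilon_0}{2m}\|\nabla_x\Phi[n^\eps]-\nabla_x\Phi[n]\|^2$ produces, via the Poisson equations \eqref{equ:PoissonEpsi} and \eqref{equ:PoissonLimit}, a coupling term that pairs against the density relative entropy and is controlled by $\mathcal{H}^\eps$ after an integration by parts.

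The error terms of order $\sqrt{\eps}$ come from two sources: the $\eps\partial_t$ prefactor in \eqref{equ:VPFP-Scale}, which after integration by parts in time contributes $\eps$ times bounded quantities controlled by the a priori mass and energy bounds $M_{\mathrm{in}}, U_{\mathrm{in}}$ from Theorem \ref{Weaksol3D}; and the discrepancy between the true current $j^\eps$ and the drift current, estimated by Cauchy–Schwarz against the entropy dissipation — a product of the form $\sqrt{\eps}\cdot\big(\text{dissipation}\big)^{1/2}\cdot(\text{bounded})$, split via Young's inequality so that half the dissipation is absorbed on the left and the remainder is $O(\eps)$, hence $O(\sqrt\eps)$ after the Gronwall step. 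I expect the main obstacle to be exactly this current identification in three dimensions with curved field lines: unlike the two-dimensional case of \cite{BosTuan}, the parallel dynamics does not decouple, so extracting the gyro-averaged drift from $\frac{1}{\eps}\intvt{} v f^\eps$ requires carefully expanding $f^\eps = n^\eps M + (f^\eps - n^\eps M)$, using the second-order moments of the Maxwellian, the antisymmetry of $v\mapsto v\wedge e$, and controlling the parallel moment by the constraint — this is where the geometry of $e(x)$ and the boundedness of $W[n]$ are essential. Once the Gronwall inequality is established, the stated limits follow immediately: the hypotheses on the initial data kill the bracket, sending $\mathcal{H}^\eps(t)\to 0$ uniformly on $[0,T]$ and the dissipation integral to $0$; the $L^1$ convergence $f^\eps\to nM$ follows from the Csiszár–Kullback–Pinsker inequality applied to the kinetic relative entropy plus the triangle inequality with $n^\eps M\to nM$, and the $L^2$ convergence of the electric fields is the vanishing of the second term in $\calE$.
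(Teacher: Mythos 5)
Your proposal follows essentially the same route as the paper: the modulated energy $\calE[n^\eps|n]$ plus the kinetic relative entropy of $f^\eps$ with respect to $n^\eps M$, controlled by a Gronwall inequality in which the Fokker--Planck dissipation is partially absorbed via Young's inequality, the current $\eps^{-1}j^\eps$ is identified from the momentum balance (its perpendicular part giving the drift flux of $n^\eps$ plus a remainder $F^\eps$, its parallel part giving the Lagrange multiplier $p^\eps$ which is killed by the constraint $Be\cdot\nabla_x k[n]=0$), the quadratic electric-field term is handled through the stress-tensor identity, and the final convergences follow from Csisz\'ar--Kullback. The only cosmetic difference is that the paper avoids differentiating the kinetic relative entropy separately by invoking the exact free-energy identity for $f^\eps$ together with the conservation of $\calE[n]$, so that only the cross term $\int_{\R^3} k[n](n^\eps-n)\,\mathrm{d}x$ needs an explicit time derivative; this is bookkeeping, not a different argument.
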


The main result of this paper, Theorem \ref{MainThm}, relies on the existence of regular solutions to the limit system. However, obtaining such solutions is challenging, as the limit equation does not fall within the scope of standard PDEs theory. In this work, we focus exclusively on the cylindrical case of the magnetic field. In Section \ref{Example}, we establish the existence and uniqueness of local-in-time regular solutions to the simplified limit system derived in Section \ref{AngVectFields}.  The analysis of the limit system in the other relevant cases, such as the toroidal case (see \cite{Lutz2013}), is an interesting problem and is left for future work.

\begin{remark}
The assumption on the external magnetic field $B$ in Theorem \ref{MainThm} is crucial in the establishment of the existence of a regular solution to the limit model, as well as the regularity properties of certain quantities associated with the solution. For an example, we refer the reader to Section \ref{Example}.
\end{remark}

The remainder of the paper is organized as follows. In Section \ref{Prelim}, we establish some a priori estimates for the three-dimensional VPFP system. In Section \ref{ForDerLimMod}, we formally derive the asymptotic model using the Hilbert expansion. In Section \ref{Conve}, we rigorously prove the convergence towards the asymptotic model, assuming the regularity of the solution to the limit problem. Section \ref{RefLiMod} is dedicated to finding an equivalent model by eliminating the Lagrange multiplier. In Section \ref{AngVectFields},  we further simplify the limit model for the cylindrical case of magnetic fields. Finally, in Section \ref{Example}, we investigate the well-posedness of the simplified limit model derived in Section \ref{AngVectFields}.

\section{Preliminaires}
\label{Prelim}
We begin by introducing the concept of a weak solution to the VPFP system \eqref{equ:VPFP-Scale}, \eqref{equ:PoissonEpsi}, and \eqref{equ:Initial} for any fixed $\eps>0$. 
\begin{defi}
\label{Weaksol3D}
Let $T>0$. Given $f^\eps_{\mathrm{in}} \geq 0$ and $f^\eps_{\mathrm{in}}\in L^1(\R^3\times\R^3)$, we will say that $f^\eps$ is a weak solution of \eqref{equ:VPFP-Scale}-\eqref{equ:Initial} on the time interval $[0,T]$ if and only if the following conditions are satisfied:\\
(i) $f^\eps \geq 0,\,\, f^\eps\in L^\infty(0,T; L^1\cap L^\infty(\R^3\times\R^3))$,\\
(ii) for any $\psi\in C^\infty_c([0,T[\times\R^3\times\R^3)$, we have
\begin{align*}
\int_{0}^{T}\int_{\R^3}\int_{\R^3} f^\eps\left[ \eps\dfrac{\partial \psi}{\partial t} + v\cdot\nabla_x \psi + \dfrac{q}{m}\left( E[f^\eps] + v \wedge \dfrac{Be}{\eps} \right)\cdot\nabla_v \psi\right]\mathrm{d}v\mathrm{d}x\mathrm{d}t \\
+ \int_{0}^{T}\int_{\R^3}\int_{\R^3}\dfrac{1}{\tau} f^\eps(\sigma \Delta_v \psi - v\cdot\nabla_v \psi) \mathrm{d}v\mathrm{d}x\mathrm{d}t + \int_{\R^3}\int_{\R^3}\eps f^\eps_{\mathrm{in}}(x,v)\psi(0,x,v)\mathrm{d}v\mathrm{d}x =0.
\end{align*}
\end{defi}

The global-in-time existence of a weak solution for the nonlinear VPFP system  \eqref{equ:VPFP-Scale}-\eqref{equ:Initial} follows almost the same argument as presented in \cite{CarChoiJung2021, ChoiJeong2023}. We state the existence theorem for the solution and do not provide further details here.
\begin{thm}
\label{Thm:Weaksol}
Let $B\in L^\infty(\R^3)$. Suppose that the initial data $f^\eps_\mathrm{in}$ satisfies
\[
f^\eps _\mathrm{in} \geq 0,\,\, f^\eps _\mathrm{in} \in L^1\cap L^\infty (\R^3\times\R^3),\,\, (|x|^2 + |v|^2 + \Phi[f^\eps _\mathrm{in}])f^\eps _\mathrm{in} \in L^1(\R^3\times\R^3).
\]
Then, for any $T>0$, there exists a global weak solution of the system \eqref{equ:VPFP-Scale}-\eqref{equ:Initial} in the sense of Definition \ref{Weaksol3D} satisfying:
\[
f^\eps \in L^\infty(0,T;L^1\cap L^\infty (\R^3\times\R^3))\,\,\text{and}\,\, (|x|^2 + |v|^2 + \Phi[f^\eps])f^\eps \in L^\infty(0,T;L^1(\R^3\times\R^3)).
\]
\end{thm}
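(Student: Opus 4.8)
\textbf{Proof proposal for Theorem \ref{Thm:Weaksol}.}

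The plan is to construct the global weak solution by a regularization/approximation scheme together with compactness, following closely the strategy of \cite{CarChoiJung2021, ChoiJeong2023}. First I would introduce a regularized system: mollify the nonlinear Poisson coupling by replacing $E[f^\eps]$ with $E[f^\eps \ast_x \chi_\delta]$ (or convolve $f^\eps$ in both $x$ and $v$), truncate the velocity field, and if needed add a small artificial viscosity $\delta\Delta_x$ to gain spatial regularity. For the regularized problem the drift field is smooth and bounded, so one gets a global-in-time classical (or mild) solution $f^\eps_\delta\geq 0$ by a fixed-point argument on short time intervals combined with the a priori bounds below to iterate to all of $[0,T]$. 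Nonnegativity is preserved because $\mathcal{Q}$ is a Fokker-Planck operator (maximum principle for the Kolmogorov-type equation) and the transport part is measure-preserving in $(x,v)$ up to the divergence-free structure of the Lorentz force $(v\wedge B^\eps e)\cdot\nabla_v$ and the fact that $\Divx v = 0$.

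Next I would derive the uniform-in-$\delta$ a priori estimates that survive passage to the limit. Integrating the equation in $(x,v)$ gives conservation of mass $\|f^\eps_\delta(t)\|_{L^1} = M^\eps_{\mathrm{in}}$. Multiplying by $\log f^\eps_\delta$ (or using the free-energy functional $\int\!\int \frac{|v|^2}{2}f + \sigma f\log f\,\marmd v\marmd x + \frac{\epsilon_0}{2m}\int|\nabla_x\Phi[f]|^2\marmd x$) and using that the magnetic force and the field $E\cdot\nabla_v$ do not contribute to the energy balance (the latter because $\int\!\int E\cdot v\, f = \frac{d}{dt}\frac{\epsilon_0}{2m}\int|\nabla_x\Phi|^2$ via the Poisson equation), I obtain the entropy--energy inequality controlling $\int\!\int\frac{|v|^2}{2}f^\eps_\delta$, $\int\!\int f^\eps_\delta|\log f^\eps_\delta|$, $\int|\nabla_x\Phi[f^\eps_\delta]|^2$ and the entropy dissipation $\frac{1}{\eps\tau}\int_0^t\!\int\!\int\frac{|\sigma\nabla_v f^\eps_\delta + v f^\eps_\delta|^2}{f^\eps_\delta}$, all uniformly in $\delta$. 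Propagation of the second moment in $x$, i.e. $\int\!\int |x|^2 f^\eps_\delta$, follows from $\frac{d}{dt}\int\!\int|x|^2 f = 2\int\!\int x\cdot v f \leq \int\!\int(|x|^2+|v|^2)f$ and Gronwall. The $L^\infty$ bound on $f^\eps_\delta$ is obtained by writing the equation along characteristics (Duhamel against the degenerate-parabolic Fokker-Planck semigroup, which is a contraction on $L^\infty$) and using that the force field has bounded divergence in $v$, namely $\Divv(v) = 3$; this yields $\|f^\eps_\delta(t)\|_{L^\infty}\leq \|f^\eps_{\mathrm{in}}\|_{L^\infty} e^{Ct/\eps}$, a bound independent of $\delta$.

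Then I would pass to the limit $\delta\to 0$. The velocity-averaging lemma applied to the kinetic transport operator, together with the uniform $L^1\cap L^\infty$ and moment bounds, gives strong $L^1_{\mathrm{loc}}$ compactness of the macroscopic density $n[f^\eps_\delta] = \intvt{f^\eps_\delta}$, hence strong convergence of $\nabla_x\Phi[f^\eps_\delta]$ in $L^2$ by elliptic regularity for the Poisson equation; combined with weak-$\ast$ compactness of $f^\eps_\delta$ in $L^\infty(0,T;L^1\cap L^\infty)$, this lets one pass to the limit in the nonlinear term $E[f^\eps_\delta]\cdot\nabla_v f^\eps_\delta$ in the weak formulation of Definition \ref{Weaksol3D}. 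Lower semicontinuity of convex functionals transfers the entropy/energy/moment bounds to the limit $f^\eps$, and the tightness provided by the $|x|^2+|v|^2$ moments together with $\Phi[f^\eps_\mathrm{in}]f^\eps_\mathrm{in}\in L^1$ (to control the potential energy, using Hardy--Littlewood--Sobolev to bound $\int\Phi[f]f$ by moments of $f$) rules out loss of mass at infinity, so $(|x|^2+|v|^2+\Phi[f^\eps])f^\eps \in L^\infty(0,T;L^1)$. I expect the main obstacle to be the handling of the self-consistent electric force: controlling $E[f^\eps_\delta] = -\nabla_x\Phi[f^\eps_\delta]$ uniformly (it is only in $L^\infty(0,T;L^2)$ a priori, not bounded pointwise) and showing the required strong compactness of the density via velocity averaging, so that the product $E\cdot\nabla_v f$ converges in the sense of distributions; this is exactly the delicate point in \cite{CarChoiJung2021, ChoiJeong2023}, and since the magnetic term is linear with divergence-free structure it causes no additional difficulty. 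As the full argument is by now standard, I would only sketch these points and refer to the cited works for the details.
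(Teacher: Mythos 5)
Your sketch is consistent with the paper, which in fact gives no proof of Theorem \ref{Thm:Weaksol} at all: it simply states that the existence "follows almost the same argument as presented in \cite{CarChoiJung2021, ChoiJeong2023}" and refers to those works. Your outline (regularization, uniform mass/entropy/energy/moment and $L^\infty$ estimates, velocity averaging for compactness of the density, and passage to the limit in the nonlinear electric term) is precisely the standard strategy of those references, so it matches the paper's intended route.
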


The asymptotic behavior of the VPFP equation \eqref{equ:VPFP-Scale} when $\eps$ becomes small comes from the balance of the free energy functional
\[
\calE[f^\eps] = \int_{\R^3}\int_{\R^3}{\left( \sigma f^\eps \ln f^\eps + f^\eps \dfrac{|v|^2}{2} \right)}\mathrm{d}v\mathrm{d}x + \dfrac{\epsilon_0}{2m}\int_{\R^3}{|E[f^\eps]|^2}\mathrm{d}x.
\]
Multiplying the left hand side of \eqref{equ:VPFP-Scale} by $\sigma(1+ \ln f^\eps) + \frac{|v|^2}{2}$ and integrating with respect to $(x,v)\in \R^3\times\R^3$ yield
\begin{align}
\label{equ:BalanceEner}
&\int_{\R^3}\int_{\R^3}{\left[ \eps \partial_t f^\eps + v\cdot \nabla_x f^\eps + \dfrac{q}{m}\left( E[f^\eps]+ v\wedge \dfrac{Be}{\eps} \right)\cdot \nabla_v f^\eps \right]\left[\sigma(1+ \ln f^\eps) + \dfrac{|v|^2}{2} \right]}\mathrm{d}v\mathrm{d}x\nonumber\\
&= \eps \dfrac{\mathrm{d}}{\mathrm{d}t} \int_{\R^3}\int_{\R^3}{\left( \sigma f^\eps \ln f^\eps + f^\eps \dfrac{|v|^2}{2} \right)}\mathrm{d}v\mathrm{d}x + \dfrac{q}{m} \int_{\R^3}{\nabla_x\Phi[f^\eps]\cdot \left( \intvt{v f^\eps}\right)}\mathrm{d}x.
\end{align} 
Thanks to the continuty equation
$
\eps \partial_t n[f^\eps] + \Divx \int_{\R^3}{v f^\eps}\mathrm{d}v =0,
$
we write
\begin{align}
\label{equ:EvoluElect}
\dfrac{q}{m} \int_{\R^3}{\nabla_x\Phi[f^\eps]\cdot \left( \intvt{v f^\eps}\right)}\mathrm{d}x =\eps \dfrac{q}{m}\int_{\R^3}{\Phi[f^\eps]\partial_t n[f^\eps]}\mathrm{d}x.
\end{align}
Multiplying the right hand side of \eqref{equ:VPFP-Scale} by $\sigma(1+ \ln f^\eps) + \frac{|v|^2}{2}$ and then integrating with respect to $(x,v)\in \R^3\times\R^3$ imply
\begin{align}
\label{equ:Dissipation}
\int_{\R^3}\int_{\R^3}{\mathcal{Q}(f^\eps)\left[ \sigma(1+ \ln f^\eps) + \dfrac{|v|^2}{2} \right]}\mathrm{d}v\mathrm{d}x = -\dfrac{1}{\tau} \int_{\R^3}\int_{\R^3}{\dfrac{|\sigma M\nabla_v (f^\eps/M) |^2}{f^\eps}}\mathrm{d}v\mathrm{d}x,
\end{align}
where $M$ stands for the Maxwellian equilibrium $M(v) = (2\pi\sigma)^{-3/2}\exp\left(-\frac{|v|^2}{2\sigma}\right)$, $v\in\R^3$. Combining \eqref{equ:BalanceEner}, \eqref{equ:EvoluElect}, and \eqref{equ:Dissipation} leads to the balance
\begin{align}
\label{equ:EquFreeEne}
&\eps \dfrac{\mathrm{d}}{\mathrm{d}t} \left[ \int_{\R^3}\int_{\R^3}{\left( \sigma f^\eps \ln f^\eps + f^\eps \dfrac{|v|^2}{2} \right)}\mathrm{d}v\mathrm{d}x + \dfrac{\epsilon_0}{2m}\int_{\R^3}{|\nabla_x \Phi[f^\eps]|^2}\mathrm{d}x\right]\\
&+ \dfrac{1}{\tau} \int_{\R^3}\int_{\R^3}{\dfrac{|\sigma M\nabla_v (f^\eps/M) |^2}{f^\eps}}\mathrm{d}v\mathrm{d}x =0, \nonumber
\end{align}
or equivalently
\[
\eps \calE[f^\eps(t)] + \dfrac{1}{\tau} \int_0^t\int_{\R^3}\int_{\R^3}{\dfrac{|\sigma M\nabla_v (f^\eps/M) |^2}{f^\eps}}\mathrm{d}v\mathrm{d}x\mathrm{d}s = \eps \calE[f^\eps(0)].
\]
Note that weak solutions may only satisfy an inequality in the above relation, which is sufficient for our purposes. At least formally, we deduce that $f^\eps = f + \calO(\eps)$, as $\eps \searrow 0$, where the leading order density $f$ satisfies
\[
\dfrac{1}{\tau}\int_{\R^3}\int_{\R^3}{\dfrac{|\sigma M\nabla_v (f/M) |^2}{f}}\mathrm{d}v\mathrm{d}x =0,\,\, t\in \R_+ .
\]
Therefore, we have $f(t,x,v) = n(t,x)M(v), (t,x,v)\in \R_+ \times \R^3\times\R^3$ and it remains to determine the time evolution of the concentration $n = \intvt{f}$.

We establish uniform bounds for the kinetic energy.
\begin{lemma}
\label{KinEne}
Let $T>0$. Assume that the initial particle densities $(f^\eps_{\mathrm{in}})$ satisfy $f^\eps _{\mathrm{in}} \geq 0$, $M_{\mathrm{in}}:= \sup_{\eps >0} M^\eps _{\mathrm{in}} < +\infty$, $U_{\mathrm{in}} := \sup_{\eps >0} U^\eps _{\mathrm{in}} < +\infty$, where for any $\eps >0$
\[
M^\eps _{\mathrm{in}} := \int_{\R^3}\int_{\R^3}{f^\eps _{\mathrm{in}} (x,v)}\mathrm{d}v\mathrm{d}x,\quad U^\eps _{\mathrm{in}} := \int_{\R^3}\int_{\R^3}{\dfrac{|v|^2}{2}f^\eps _{\mathrm{in}} (x,v)}\mathrm{d}v\mathrm{d}x + \dfrac{\epsilon_0}{2m}\int_{\R^3}{|\nabla_x \Phi[f^\eps _{\mathrm{in}}]|^2}\mathrm{d}x.
\]
We assume that $(f^\eps)_{\eps>0}$ are weak solutions of \eqref{equ:VPFP-Scale}, \eqref{equ:PoissonEpsi} and \eqref{equ:Initial}. Then we have 
\[
\eps \sup_{0\leq t\leq T} \left\{ \int_{\R^3}\int_{\R^3}{\dfrac{|v|^2}{2}f^\eps(t,x,v)}\mathrm{d}v\mathrm{d}x + \dfrac{\epsilon_0}{2m}\int_{\R^3}{|\nabla_x \Phi[f^\eps]|^2} \right\}\mathrm{d}x \leq \eps U_{\mathrm{in}} + \dfrac{3\sigma}{\tau}TM_{\mathrm{in}}
\]
and
\[
\dfrac{1}{\tau} \int_0^T\int_{\R^3}\int_{\R^3}{|v|^2 f^\eps(t,x,v)}\mathrm{d}v\mathrm{d}x\mathrm{d}t \leq \eps U_{\mathrm{in}} + \dfrac{3\sigma}{\tau}TM_{\mathrm{in}}.
\]
\end{lemma}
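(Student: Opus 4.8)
The plan is to revisit the free energy balance \eqref{equ:EquFreeEne}, but since the free energy $\calE[f^\eps]$ contains the entropy term $\sigma f^\eps \ln f^\eps$ which is not sign-definite, I cannot directly extract a bound on the kinetic energy alone from it. Instead, the natural route is to work with the \emph{relative} free energy with respect to the global Maxwellian $M$, i.e. to rewrite the entropy plus kinetic energy as
\[
\int_{\R^3}\int_{\R^3}\left(\sigma f^\eps \ln f^\eps + f^\eps \frac{|v|^2}{2}\right)\md v\,\md x
= \sigma \int_{\R^3}\int_{\R^3} M\, h\!\left(\frac{f^\eps}{M}\right)\md v\,\md x + (\text{const}) \int_{\R^3}\int_{\R^3} f^\eps\,\md v\,\md x,
\]
using $-\ln M = \frac{|v|^2}{2\sigma} + \frac32 \ln(2\pi\sigma)$ and $h(s) = s\ln s - s + 1 \geq 0$. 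Mass is conserved (integrate \eqref{equ:VPFP-Scale} in $(x,v)$, or use the continuity equation), so $\int\!\int f^\eps\,\md v\,\md x = M^\eps_{\mathrm{in}} \leq M_{\mathrm{in}}$ for all $t$. This turns \eqref{equ:EquFreeEne} into a balance for a nonnegative quantity, up to the controlled mass term.

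The key steps, in order: (1) From conservation of mass, $\int\!\int f^\eps(t)\,\md v\,\md x = M^\eps_{\mathrm{in}}$. (2) Use the elementary inequality, valid because $h \geq 0$: for any $a>0$,
\[
\frac{|v|^2}{2}\, f^\eps \leq \sigma\, M\, h\!\left(\frac{f^\eps}{M}\right) + \sigma f^\eps\big(\tfrac32\ln(2\pi\sigma) + 1\big) + \sigma M \cdot(\text{lower order in }v),
\]
more cleanly: rewrite $\sigma h(f^\eps/M)\,M = \sigma f^\eps \ln(f^\eps/M) - \sigma f^\eps + \sigma M$ and note $\sigma\ln(f^\eps/M) = \sigma\ln f^\eps + \frac{|v|^2}{2} + \frac32\sigma\ln(2\pi\sigma)$, hence
\[
\sigma f^\eps \ln f^\eps + f^\eps\frac{|v|^2}{2} = \sigma M\, h\!\left(\frac{f^\eps}{M}\right) - \sigma M + f^\eps\Big(\sigma - \tfrac32\sigma\ln(2\pi\sigma)\Big) + \frac{f^\eps|v|^2}{2}\cdot 0,
\]
wait — I must be careful: the $\frac{|v|^2}{2}f^\eps$ is already present on both sides, so the substitution gives exactly $\sigma f^\eps\ln f^\eps + f^\eps\frac{|v|^2}{2} = \sigma M h(f^\eps/M) + f^\eps\big(\sigma - \frac{3\sigma}{2}\ln(2\pi\sigma)\big) - \sigma M$; integrating in $(x,v)$, the $f^\eps$ and $M$ terms integrate to constants times $M^\eps_{\mathrm{in}}$ and $1$ respectively. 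Adding the electric energy $\frac{\epsilon_0}{2m}\int|\nabla_x\Phi[f^\eps]|^2$ (which is nonnegative), we get $\calE[f^\eps(t)] = \sigma\int\!\int M h(f^\eps/M)\,\md v\,\md x + \frac{\epsilon_0}{2m}\int|\nabla_x\Phi[f^\eps]|^2\md x + c\, M^\eps_{\mathrm{in}} - c'$ for explicit constants. (3) Plug this into the time-integrated balance $\eps\calE[f^\eps(t)] + \frac1\tau\int_0^t D[f^\eps]\,\md s \leq \eps\calE[f^\eps(0)]$ (an inequality for weak solutions, as noted after \eqref{equ:EquFreeEne}), cancel the constant mass terms on both sides, and obtain that $\eps$ times (relative entropy + electric energy) at time $t$ plus the dissipation is bounded by $\eps U^\eps_{\mathrm{in}} + \eps c\, M^\eps_{\mathrm{in}}$ — \emph{but this still does not close, because relative entropy does not dominate kinetic energy}. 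The actual fix: don't pass through the relative entropy for the kinetic energy bound. Instead, \textbf{directly multiply \eqref{equ:VPFP-Scale} by $\frac{|v|^2}{2}$ alone} (not $\sigma(1+\ln f^\eps) + \frac{|v|^2}{2}$), integrate in $(x,v)$: the magnetic term vanishes since $(v\wedge Be)\cdot\nabla_v\frac{|v|^2}{2} = (v\wedge Be)\cdot v = 0$; the electric term gives $-\frac{q}{m}\int\!\int f^\eps E[f^\eps]\cdot v = \eps\frac{q}{m}\int\Phi[f^\eps]\partial_t n[f^\eps]$ as in \eqref{equ:EvoluElect}, which combines with the electric energy; the transport term integrates to zero; and the Fokker–Planck term gives $\frac1\tau\int\!\int f^\eps(\sigma\Delta_v - v\cdot\nabla_v)\frac{|v|^2}{2} = \frac1\tau\int\!\int f^\eps(3\sigma - |v|^2)$. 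This yields
\[
\eps\frac{\md}{\md t}\left[\int_{\R^3}\int_{\R^3}\frac{|v|^2}{2}f^\eps\,\md v\,\md x + \frac{\epsilon_0}{2m}\int_{\R^3}|\nabla_x\Phi[f^\eps]|^2\md x\right] + \frac1\tau\int_{\R^3}\int_{\R^3}|v|^2 f^\eps\,\md v\,\md x = \frac{3\sigma}{\tau}\int_{\R^3}\int_{\R^3}f^\eps\,\md v\,\md x = \frac{3\sigma}{\tau}M^\eps_{\mathrm{in}}.
\]
(4) Integrate in time from $0$ to $t \leq T$: since $\int\!\int|v|^2 f^\eps \geq 0$, the bracketed quantity at time $t$ is $\leq U^\eps_{\mathrm{in}} + \frac{3\sigma}{\eps\tau}tM^\eps_{\mathrm{in}} \leq U_{\mathrm{in}} + \frac{3\sigma}{\eps\tau}TM_{\mathrm{in}}$, giving the first inequality after multiplying by $\eps$; and dropping the (nonnegative) bracketed term at time $t$ and integrating over $[0,T]$ gives $\frac1\tau\int_0^T\int\!\int|v|^2 f^\eps \leq \eps U_{\mathrm{in}} + \frac{3\sigma}{\tau}TM_{\mathrm{in}}$, the second inequality.

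The main obstacle is essentially bookkeeping rather than conceptual: one must (a) justify the computation for \emph{weak} solutions — the identities above should be read as inequalities (or established by using the weak formulation of Definition \ref{Weaksol3D} with an approximation of $\psi = \frac{|v|^2}{2}$ truncated and regularized, relying on the finiteness of $\int\!\int(|x|^2+|v|^2+\Phi[f^\eps])f^\eps$ from Theorem \ref{Thm:Weaksol} to pass to the limit), and (b) correctly track that the coupling of the electric force with the continuity equation produces exactly the time derivative of the electric energy $\frac{\epsilon_0}{2m}\int|\nabla_x\Phi[f^\eps]|^2$, which is where the identity $\frac{q}{m}\int\Phi[f^\eps]\partial_t n[f^\eps] = \frac{\md}{\md t}\frac{\epsilon_0}{2m}\int|\nabla_x\Phi[f^\eps]|^2$ (after using $-\epsilon_0\Delta_x\Phi[f^\eps] = qn[f^\eps]$ and integrating by parts) is used. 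The sign of the Fokker–Planck contribution ($3\sigma\int\!\int f^\eps$ on the gain side, $-\int\!\int|v|^2 f^\eps$ on the loss side) is what makes the a priori estimate work, and the conservation of mass is what keeps the right-hand side uniformly bounded.
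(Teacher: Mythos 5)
Your final argument (steps (3)--(4): multiply \eqref{equ:VPFP-Scale} by $\tfrac{|v|^2}{2}$ alone, use the vanishing of the magnetic contribution, convert the electric work term into the time derivative of $\tfrac{\epsilon_0}{2m}\int|\nabla_x\Phi[f^\eps]|^2\,\mathrm{d}x$ via the continuity equation, and read off both bounds from the nonnegativity of the two left-hand terms) is exactly the paper's proof, and it is correct; the preliminary relative-entropy detour is unnecessary but you rightly discard it yourself. The paper likewise performs the computation for smooth solutions and invokes an approximation by test functions to cover the weak-solution case, just as you indicate in your bookkeeping remark.
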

\begin{proof}
We will establish the results  for smooth solutions, and we observe that the same conclusions hold true in the framework of weak solutions by combining the formal arguments to be exposed here with the choice of an appropriate sequence of test functions in Definition \ref{Weaksol3D} for every studied property (cf. \cite{BoniCarSoler1997, BouDol1995}).
Multiplying \eqref{equ:VPFP-Scale} by $\frac{|v|^2}{2}$ and integrating with respect to $(x,v)\in\R^3\times\R^3$ yield
\[
\eps \dfrac{\mathrm{d}}{\mathrm{d}t} \left\{ \int_{\R^3}\int_{\R^3}{\dfrac{|v|^2}{2}f^\eps(t,x,v)}\mathrm{d}v\mathrm{d}x + \dfrac{\epsilon_0}{2m}\int_{\R^3}{|\nabla_x \Phi[f^\eps]|^2}\mathrm{d}x \right\} = \dfrac{3\sigma}{\tau}M^\eps _{\mathrm{in}} - \dfrac{1}{\tau} \int_{\R^3}\int_{\R^3}{|v|^2 f^\eps}\mathrm{d}v\mathrm{d}x,
\]
and therefore we obtain
\begin{align*}
&\eps  \left\{ \int_{\R^3}\int_{\R^3}{\dfrac{|v|^2}{2}f^\eps(t,x,v)}\mathrm{d}v\mathrm{d}x + \dfrac{\epsilon_0}{2m}\int_{\R^3}{|\nabla_x \Phi[f^\eps]|^2}\mathrm{d}x \right\} + \dfrac{1}{\tau} \int_0^t\int_{\R^3}\int_{\R^3}{|v|^2 f^\eps}\mathrm{d}v\mathrm{d}x\mathrm{d}s \\
&= \eps U^\eps _{\mathrm{in}}+  \dfrac{3\sigma}{\tau} t M^\eps _{\mathrm{in}},
\end{align*}
which yields the results.
\end{proof}
\section{Formal derivation of the limit model}
\label{ForDerLimMod}
This section is devoted to deriving the limit model for \eqref{equ:VPFP-Scale}, \eqref{equ:PoissonEpsi}, and \eqref{equ:Initial} when $\eps$ becomes
very small, using the properties of the averaged dominant transport operator. At a formal level, we initiate our analysis with a Hilbert expansion 
\[f^\eps = f + \eps f_1 + \eps^2 f_2 + ....\]
 Plugging the above ansatz into the kinetic equation \eqref{equ:VPFP-Scale} and identifying the contributions for each power of $\eps$ leads to
\begin{equation}
\label{equ:Order0}
\dfrac{q}{m}(v\wedge Be)\cdot\nabla_v f =0.
\end{equation}
\begin{equation}
\label{equ:Order1}
v\cdot \nabla_x f + \dfrac{q}{m}E[f]\cdot \nabla_v f + \dfrac{q}{m}(v\wedge Be)\cdot \nabla_v f_1 = \mathcal{Q}(f).
\end{equation}
\begin{equation}
\label{equ:Order2}
\partial_t f + v\cdot \nabla_x f_1 + \dfrac{q}{m}E[f_1]\cdot\nabla_v f + \dfrac{q}{m}E[f]\cdot\nabla_v f_1 + \dfrac{q}{m}(v\wedge Be)\cdot \nabla_v f_2 = \mathcal{Q}(f_1).
\end{equation}
Multiplying \eqref{equ:Order1} by $\sigma(1+ \ln f) + \frac{|v|^2}{2}$ and integrating with respect to $(x,v)\in \R^3\times\R^3$ yields
\begin{align}
\label{equ:EquBalancef1}
&\int_{\R^3}\int_{\R^3}{\left( v\cdot\nabla_x + \dfrac{q}{m}E[f]\cdot\nabla_v \right)\left(\sigma f \ln f + f\dfrac{|v|^2}{2} \right)}\mathrm{d}v\mathrm{d}x + \dfrac{1}{\tau}\int_{\R^3}\int_{\R^3}{\dfrac{|\sigma M \nabla_v(f/M)|^2}{f}}\mathrm{d}v\mathrm{d}x\nonumber\\
&= \intvxt{\dfrac{q}{m}E[f]\cdot vf} + \intvxt{f_1 \dfrac{q}{m}(v\wedge Be)\cdot\dfrac{\sigma \nabla_v f}{f}}.
\end{align}
Integrating \eqref{equ:Order1} with respect to $v\in\R^3$, we deduce that $\Divx \intvt{vf} =0$ and therefore we have
\[
\int_{\R^3}\int_{\R^3}{\dfrac{q}{m}E[f]\cdot vf}\mathrm{d}v\mathrm{d}x = - \dfrac{q}{m}\int_{\R^3}{\nabla_x \Phi[f]\cdot \left( \int_{\R^3}{v f}\mathrm{d}v\right)}\mathrm{d}x = 0.
\]
Using also \eqref{equ:Order0}, the last contribution in the right hand side of 
\eqref{equ:EquBalancef1} cancels, and therefore we obtain
\[
\dfrac{1}{\tau} \int_{\R^3}\int_{\R^3}{\dfrac{|\sigma M\nabla_v (f/M)|^2}{f}}\mathrm{d}v\mathrm{d}x =0,\,\, t\in \R_+ ,
\]
saying that $f =nM$, for some function $n=n(t,x)$ to be determined. In that case, the constraint \eqref{equ:Order0} is satisfied and \eqref{equ:Order1} becomes
\begin{equation*}
v\cdot\nabla_x f + \dfrac{q}{m}E[f]\cdot\nabla_v f \in \mathrm{Range}((v\wedge e(x))\cdot\nabla_v),\,\, x\in \R^3.
\end{equation*}
For any $e \in \mathbb{S}^2 $, we denote by $\mathcal{R} (\theta,e)$ the rotation of angle $\theta$ around the axis $e$
\begin{equation*}
\mathcal{R}(\theta, e )v = \cos\theta (I_3 -e\otimes e)v - \sin\theta (v\wedge e) + (v\cdot e)e,\,\, v\in\R^3.
\end{equation*}
The characteristic flow of the field $(v\wedge e)\cdot\nabla_v$
\[
\dfrac{\mathrm{d}\calV}{\mathrm{d}\theta} = \calV(\theta;v)\wedge e, \quad \calV(0;v)= v,
\]
is given by
\[
\calV(\theta;v) = \mathcal{R}(-\theta,e)v = \cos\theta (I_3 -e\otimes e)v + \sin\theta (v\wedge e) + (v\cdot e)e,\,\, (\theta,v)\in \R\times\R^3.
\]
For any function $g(v) = (v\wedge e)\cdot\nabla_v h$ in the range of the operator $(v\wedge e)\cdot\nabla_v$, we have
\[
g(\calV(\theta;v)) = \dfrac{\mathrm{d}}{\mathrm{d}\theta}h(\calV(\theta;v)),\,\, (\theta,v)\in \R\times\R^3,
\]
and by the periodicity of the flow we obtain
\[
\dfrac{1}{2\pi}\int_{0}^{2\pi} g(\calV(\theta;v)) \mathrm{d}\theta =0, \,\, v\in\R^3.
\]
Therefore, for any $x\in\R^3$, the average along the characteristic flow with respect to $(v\wedge e(x))\cdot\nabla_v$ of the function $v\cdot\nabla_x f + \frac{q}{m}E[f]\cdot\nabla_v f$ vanishes. But
\[
v\cdot\nabla_x f + \frac{q}{m}E[f]\cdot\nabla_v f = (v \cdot \nabla_x n)M - \dfrac{q}{m}(E[f]\cdot v)n\dfrac{M}{\sigma} = \dfrac{n}{\sigma}Mv\cdot\nabla_x(\sigma\ln n + \dfrac{q}{m}\Phi[f]),
\]
and since
\[
\dfrac{1}{2\pi} \int_{0}^{2\pi} M(\calV(\theta;v))\calV(\theta;v)\mathrm{d}\theta = M(v)(v\cdot e)e,
\]
finally we obtain the constraint
\[
e\cdot \nabla_x k[n] = 0,\quad k[n] = \sigma(1+\ln n) + \dfrac{q}{m}\Phi[n],\,\, x\in\R^3.
\]
Here the potential $\Phi = \Phi[n]$ writes
\[
\Phi[n(t)](x) = \dfrac{q}{4\pi\epsilon_0}\int_{\R^3}\dfrac{n(t,x')}{|x-x'|}\mathrm{d}x',\,\, (t,x)\in \R_+ \times\R^3.
\]
The time evolution for the concentration $n$ comes by integrating \eqref{equ:Order2} with respect to $v\in\R^3$
\begin{equation}
\label{equ:EquContinum}
\partial_t n + \Divx\int_{\R^3}{v f_1}\mathrm{d}v =0.
\end{equation}
Multiplying \eqref{equ:Order1} by $v$ and integrating with respect to $v\in\R^3$ we obtain
\[
\Divx \int_{\R^3}{v\otimes v f}\mathrm{d}v - n\dfrac{q}{m}E[f] - \dfrac{qB}{m}\int_{\R^3}{v f_1\wedge e}\mathrm{d}v =0.
\]
Since $f$ is a Maxwellian equilibrium, we have $\intvt{v\otimes v f} = \sigma n I_3$ and the previous equality becomes
\[
\omega_c \int_{\R^3}{v f_1}\mathrm{d}v \wedge e = \sigma \nabla_x n - n\dfrac{q}{m}E[f],
\]
or equivalently
\begin{align*}
\omega_c (I_3 - e\otimes e)\int_{\R^3}{v f_1}\mathrm{d}v = ne\wedge \nabla_x k[n].
\end{align*}
The divergence with respect to $x$ of $\int_{\R^3}{vf_1}\mathrm{d}v$ writes
\begin{align*}
\Divx \int_{\R^3}{v f_1}\mathrm{d}v &= \Divx \left[ (I_3 - e\otimes e)\int_{\R^3}{v f_1}\mathrm{d}v \right] + \Divx \left[ e\otimes e\intvt{v f_1} \right]\\
&= \Divx\left( \dfrac{n e}{\omega_c}\wedge \nabla_x k[n]\right) +  Be\cdot\nabla_x \int_{\R^3}{\dfrac{(v\cdot e)f_1}{B}}\mathrm{d}v.
\end{align*}
Coming back in \eqref{equ:EquContinum}, we obtain that the limiting concentration $n$ satisfies equation \eqref{equ:gyro-kinetic}, 
for some function $p$ such that the constraint \eqref{equ:constraint} holds true.
The limit model involves a Lagrange multiplier $p$, associated to the constraint \eqref{equ:constraint}. One of the main difficulty is that the unknown is the concentration $n$, whereas the constraint relies on $k[n]$. 
\begin{remark}
 In the absence of magnetic fields, the constraint \eqref{equ:constraint} is removed, and equation \eqref{equ:Order1} simplifies to
\[
v\cdot\nabla_x f - \dfrac{q}{m}\nabla_x \Phi[f]\cdot\nabla_v f =0.
\]
Substituting $f(t,x,v) = n(t,x)M(v)$ into the previous equation, direct computations yield 
$
\nabla_x k[n] = 0,
$
which implies that the concentration $n(t,x)$ takes the form
\begin{equation}
\label{equ:Boltz-Gibb}
n(t,x) = Z(t) e^{-\frac{q}{m\sigma}\Phi[n(t)](x)},
\end{equation}
which is the so-called Boltzmann-Gibbs relation, relating the electron density to the electric potential, cf. \cite{BarGolToanSen16}. When the magnetic field is uniform, i.e., $Be = (0,0,1)^t$, the constraint \eqref{equ:constraint} becomes
$
\partial_{x_3} k[n] =0, 
$
which implies that the concentration $n(t,x)$ can be written as
\begin{equation}
\label{equ:ReduBoltz-Gibb}
n(t,x) = N(t,x_{\perp})\dfrac{e^{-\frac{q}{m\sigma}\Phi[n(t)](x)}}{\int_{\R}e^{-\frac{q}{m\sigma}\Phi[n(t)](x_\perp, x_3)}\mathrm{d}x_3},
\end{equation}
where $ x= (x_\perp, x_3)\in \R^2\times \R$, cf. \cite{HerRod2019, Negu}. It is worth noting that our limit model \eqref{equ:gyro-kinetic} is consistent with the limit model of the electron distribution function obtained in \cite{HerRod2019}. Indeed, in the case of  uniform magnetic fields, the limit equation \eqref{equ:gyro-kinetic} becomes
\[
\partial_t n + \Divx \left( n E \wedge e \right) + \partial_{x_3}p =0.
\]
Integrating in $x_3$ to eliminate the Lagrange multiplier $p$ and using \eqref{equ:ReduBoltz-Gibb}, we obtain 
\[
\partial_t N(t,x_{\perp}) + \mathrm{div}_{x_\perp} \left( N(t,x_\perp){^\perp} \nabla_{x_\perp}\tilde{\Phi}  \right)  =0,
\]
where $\tilde{\Phi}: \R_+ \times \R^2 \to \R$ is an $x_3$ averaged of $\Phi[n]$
\[
\tilde{\Phi}(t,x_\perp) = \dfrac{m\sigma}{q} \ln \left( \int_{\R} e^{-\frac{q}{m\sigma}\Phi[n(t)](x_\perp, x_3)}\mathrm{d}x_3 \right),
\]
which is exactly the limit model introduced in \cite{HerRod2019}.
\end{remark}

We now provide  some fundamental properties satisfied by the asymptotic model \eqref{equ:gyro-kinetic}, \eqref{equ:constraint}, and \eqref{equ:PoissonLimit}. Formally, we have the balances
\begin{pro}
\label{BalLiMod}
Any non-negative regular solution of the limit model \eqref{equ:gyro-kinetic}, \eqref{equ:constraint}, and \eqref{equ:PoissonLimit} verifies the mass and free energy conservations
\[
\dfrac{\mathrm{d}}{\mathrm{d}t}\int_{\R^3}{n(t,x)}\mathrm{d}x =0,\quad \dfrac{\mathrm{d}}{\mathrm{d}t}\int_{\R^3}{\left\{ \sigma n\ln n + \dfrac{\epsilon_0}{2m}|\nabla_x\Phi[n]|^2\right\}}\mathrm{d}x =0.
\]
\end{pro}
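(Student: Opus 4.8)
The plan is to establish the two conservation laws directly from the PDE \eqref{equ:gyro-kinetic}, the constraint \eqref{equ:constraint}, and the Poisson equation \eqref{equ:PoissonLimit}, by testing \eqref{equ:gyro-kinetic} against suitable functions and integrating by parts over $\R^3$, assuming throughout enough decay of $n$, $\nabla_x\Phi[n]$ at infinity to justify all integrations by parts. For the mass conservation, I would simply integrate \eqref{equ:gyro-kinetic} over $x\in\R^3$: the two remaining terms, $\Divx\left(\frac{ne}{\omega_c}\wedge\nabla_x k[n]\right)$ and $Be\cdot\nabla_x p$, are both in divergence form — the latter because $\Divx(Be)=0$, so $Be\cdot\nabla_x p=\Divx(pBe)$ — hence their integrals vanish, giving $\frac{\mathrm d}{\mathrm dt}\int_{\R^3}n\,\mathrm dx=0$.

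For the free energy conservation, the natural multiplier is $k[n]=\sigma(1+\ln n)+\frac{q}{m}\Phi[n]$, since $\frac{\mathrm d}{\mathrm dt}\int_{\R^3}\{\sigma n\ln n+\frac{\epsilon_0}{2m}|\nabla_x\Phi[n]|^2\}\,\mathrm dx$ should equal $\int_{\R^3}k[n]\,\partial_t n\,\mathrm dx$. Indeed $\frac{\mathrm d}{\mathrm dt}\int \sigma n\ln n\,\mathrm dx=\int\sigma(1+\ln n)\partial_t n\,\mathrm dx$, while using the symmetry of the Poisson kernel and $-\epsilon_0\Delta_x\Phi[n]=qn$ one gets $\frac{\mathrm d}{\mathrm dt}\frac{\epsilon_0}{2m}\int|\nabla_x\Phi[n]|^2\,\mathrm dx=\frac{\epsilon_0}{m}\int\nabla_x\Phi[n]\cdot\nabla_x\partial_t\Phi[n]\,\mathrm dx=\frac{1}{m}\int(-\epsilon_0\Delta_x\Phi[n])\partial_t\Phi[n]\,\mathrm dx=\frac{q}{m}\int n\,\partial_t\Phi[n]\,\mathrm dx=\frac{q}{m}\int\Phi[n]\,\partial_t n\,\mathrm dx$, where the last step again uses the symmetry of the kernel. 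Summing gives exactly $\int k[n]\,\partial_t n\,\mathrm dx$. Then I substitute $\partial_t n$ from \eqref{equ:gyro-kinetic} and integrate by parts:
\begin{align*}
\int_{\R^3}k[n]\,\partial_t n\,\mathrm dx &= -\int_{\R^3}k[n]\Divx\left(\frac{ne}{\omega_c}\wedge\nabla_x k[n]\right)\mathrm dx - \int_{\R^3}k[n]\,Be\cdot\nabla_x p\,\mathrm dx\\
&= \int_{\R^3}\nabla_x k[n]\cdot\left(\frac{ne}{\omega_c}\wedge\nabla_x k[n]\right)\mathrm dx + \int_{\R^3}p\,Be\cdot\nabla_x k[n]\,\mathrm dx.
\end{align*}
The first integral vanishes because $a\wedge a=0$ (the integrand is $\frac{n}{\omega_c}\det(\nabla_x k[n],e,\nabla_x k[n])=0$), and the second integral vanishes by the constraint \eqref{equ:constraint}, namely $Be\cdot\nabla_x k[n]=0$. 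Hence $\frac{\mathrm d}{\mathrm dt}\int_{\R^3}\{\sigma n\ln n+\frac{\epsilon_0}{2m}|\nabla_x\Phi[n]|^2\}\,\mathrm dx=0$.

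The computation is essentially mechanical; the only genuinely delicate point is the justification of the boundary terms at infinity in the integrations by parts — in particular handling the slow decay of the Poisson potential $\Phi[n]$ and of $k[n]$ (which contains $\ln n$, unbounded where $n\to 0$), so that products such as $k[n]\,\frac{ne}{\omega_c}\wedge\nabla_x k[n]$ are integrable and their flux through large spheres tends to zero. Since the statement is explicitly \emph{formal} (``Formally, we have the balances''), I would simply assume the solution is regular and decays fast enough for these manipulations to be legitimate, and note that the two cancellations above — antisymmetry of the cross product and the constraint — are exactly the structural reasons the dissipation vanishes.
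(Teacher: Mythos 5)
Your proposal is correct and follows essentially the same route as the paper: multiply \eqref{equ:gyro-kinetic} by $k[n]$, identify $\int_{\R^3}k[n]\,\partial_t n\,\mathrm{d}x$ with the time derivative of the free energy via the symmetry of the Poisson kernel, and kill the two remaining terms by the antisymmetry of the cross product and the constraint $Be\cdot\nabla_x k[n]=0$. Your additional remarks on the divergence-form structure for mass conservation and on the decay needed to justify the integrations by parts only make explicit what the paper leaves implicit.
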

\begin{proof}
Clearly we have the total mass conservation. For the energy conservation, we multiply \eqref{equ:gyro-kinetic} by $k[n]$ and integrate with respect to $x\in\R^3$, observing that
\[
\int_{\R^3}{\partial_t n k[n]}\mathrm{d}x = \dfrac{\mathrm{d}}{\mathrm{d}t}\int_{\R^3}{\left\{ \sigma n\ln n + \dfrac{\epsilon_0}{2m}|\nabla_x\Phi[n]|^2\right\}}\mathrm{d}x,
\]
\[
\int_{\R^3}{\Divx\left(\dfrac{ne}{\omega_c}\wedge \nabla_x k[n] \right)k[n]}\mathrm{d}x = - \int_{\R^3}{\left(\dfrac{ne}{\omega_c}\wedge \nabla_x k[n] \right)\cdot\nabla_x k[n]}\mathrm{d}x =0,
\]
\[
\int_{\R^3}{Be\cdot\nabla_xp k[n]}\mathrm{d}x = - \int_{\R^3}{pBe\cdot \nabla_x k[n]}\mathrm{d}x =0.
\]
\end{proof}
Recall the usual drift velocities when dealing with magnetic confinement: the electric field drift, the magnetic gradient drift, and the magnetic curvature drift
\[
\dfrac{E\wedge e}{B}, \,\, - \dfrac{m|v\wedge e|^2}{2qB}\dfrac{\nabla_x B\wedge e}{B} = - \dfrac{|v\wedge e|^2}{2}\dfrac{\nabla_x \omega_c \wedge e}{\omega_c ^2},\,\, -\dfrac{m|v\wedge e|^2}{qB}\partial_x e e\wedge e = -\dfrac{(v\cdot e)^2}{\omega_c}\partial_x e e\wedge e.
\]
When working at the fluid level, the averages with respect to $v\in\R^3$ of the above drift velocities become
\[
v_{\wedge D} = \int_{\R^3}{\dfrac{E\wedge e}{B}M(v)}\mathrm{d}v = \dfrac{E\wedge e}{B},
\]
\[
v_{GD} = - \int_{\R^3}{\dfrac{|v\wedge e|^2}{2}\dfrac{\nabla_x \omega_c \wedge e}{\omega_c ^2}M(v)}\mathrm{d}v = -\sigma \dfrac{\nabla_x \omega_c \wedge e}{\omega_c ^2},
\]
\[
v_{CD} = - \int_{\R^3}{\dfrac{(v\cdot e)^2}{\omega_c}\partial_x e e\wedge e M(v)}\mathrm{d}v = - \sigma \dfrac{\partial_x e e\wedge e}{\omega_c}.
\]
The flux in the limit model \eqref{equ:gyro-kinetic} also writes $n\calV[n]$, where $\calV[n] = v_{\wedge D} + v_{GD} + v_{CD}$.
\begin{pro}
\label{Equiv_form}
Any non-negative regular function $n$ satisfying
\[
\partial_t n + \Divx\left( \dfrac{ne}{\omega_c}\wedge \nabla_x k[n] \right) + Be\cdot\nabla_x p =0,\quad k[n] = \sigma(1+\ln n) + \dfrac{q}{m}\Phi[n],
\]
also verifies
\[
\partial_t n + \Divx(n\calV[n]) + Be\cdot\nabla_x\tilde{p} =0,\quad \calV[n] = \dfrac{E\wedge e}{B} -\sigma \dfrac{\nabla_x \omega_c \wedge e}{\omega_c ^2}- \sigma \dfrac{\partial_x e e\wedge e}{\omega_c},
\]
and $\tilde{p} = p+ \frac{\sigma n}{B\omega_c}(e\cdot \rot_x e)$.
\end{pro}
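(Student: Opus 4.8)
The proof is a direct computation whose goal is to show that the two flux fields differ by a vector field $G$ with $\Divx G = Be\cdot\nabla_x(\cdot)$, so that the discrepancy is absorbed by redefining the Lagrange multiplier. First I would expand $\nabla_x k[n] = \sigma \nabla_x n / n - \frac{q}{m}E$ and use $\omega_c = qB/m$ (so that $\frac{qn}{m\omega_c} = \frac{n}{B}$) to write
\[
\frac{ne}{\omega_c}\wedge\nabla_x k[n] = \frac{nE\wedge e}{B} + \frac{\sigma}{\omega_c}\,e\wedge\nabla_x n .
\]
Subtracting $n\calV[n] = \frac{nE\wedge e}{B} - \sigma n\frac{\nabla_x\omega_c\wedge e}{\omega_c^2} - \sigma n\frac{\partial_x e\, e\wedge e}{\omega_c}$, the electric-drift contributions cancel, leaving the ``error field''
\[
G := \frac{ne}{\omega_c}\wedge\nabla_x k[n] - n\calV[n] = \frac{\sigma}{\omega_c}\,e\wedge\nabla_x n + \frac{\sigma n}{\omega_c^2}\,\nabla_x\omega_c\wedge e + \frac{\sigma n}{\omega_c}\,(\partial_x e\, e)\wedge e .
\]

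Next I would simplify $G$ using two elementary identities. Since $\frac{\nabla_x n}{\omega_c} - \frac{n\nabla_x\omega_c}{\omega_c^2} = \nabla_x(n/\omega_c)$, the first two terms of $G$ collapse to $\sigma\, e\wedge\nabla_x(n/\omega_c)$. For the curvature term, the unit-norm constraint $|e|\equiv 1$ yields the classical identity $\partial_x e\, e = (e\cdot\nabla_x)e = -\,e\wedge\rot_x e$; then the double cross-product formula together with $|e|^2=1$ gives $(\partial_x e\, e)\wedge e = -(e\wedge\rot_x e)\wedge e = -\rot_x e + (e\cdot\rot_x e)\,e$. Hence
\[
G = \sigma\, e\wedge\nabla_x(n/\omega_c) \;-\; \frac{\sigma n}{\omega_c}\rot_x e \;+\; \frac{\sigma n}{\omega_c}(e\cdot\rot_x e)\,e .
\]

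Finally I would take the divergence. Using $\Divx(e\wedge\nabla_x\psi) = \nabla_x\psi\cdot\rot_x e$ (valid since $\rot_x\nabla_x\psi=0$) with $\psi = n/\omega_c$, together with $\Divx\rot_x e = 0$, the first two terms of $G$ contribute $\sigma\nabla_x(n/\omega_c)\cdot\rot_x e - \sigma\nabla_x(n/\omega_c)\cdot\rot_x e = 0$. For the last term, set $\psi := \frac{\sigma n}{\omega_c}(e\cdot\rot_x e)$; the structural constraint $\Divx(Be)=0$, i.e. $\Divx e = -\,e\cdot\nabla_x B/B$, gives $\Divx(\psi e) = e\cdot\nabla_x\psi + \psi\,\Divx e = Be\cdot\nabla_x(\psi/B)$. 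Therefore $\Divx G = Be\cdot\nabla_x\left(\frac{\sigma n}{B\omega_c}(e\cdot\rot_x e)\right)$, and inserting $\frac{ne}{\omega_c}\wedge\nabla_x k[n] = n\calV[n] + G$ into the first equation produces the second one with $\tilde p = p + \frac{\sigma n}{B\omega_c}(e\cdot\rot_x e)$.

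The argument is routine vector calculus; the only delicate point is the treatment of the curvature term, where one must invoke the unit-norm identity $\partial_x e\, e = -e\wedge\rot_x e$ to separate the component parallel to $e$ — which is precisely what is transferred into the new multiplier $\tilde p$ — from a transverse $\rot_x e$ piece, and then notice that this transverse piece cancels exactly against the remainder produced when combining $e\wedge\nabla_x n$ with $n\,\nabla_x\omega_c\wedge e$ into $\sigma\, e\wedge\nabla_x(n/\omega_c)$.
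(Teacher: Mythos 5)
Your proof is correct and follows essentially the same route as the paper: both expand $\nabla_x k[n]$, split $\rot_x e$ into its components parallel and perpendicular to $e$ (your identity $\partial_x e\, e = -e\wedge\rot_x e$ is equivalent to the paper's $(I_3-e\otimes e)\rot_x e = e\wedge \partial_x e\, e$), and use $\Divx(Be)=0$ to convert the residual parallel divergence into $Be\cdot\nabla_x(\cdot)$. The only difference is organizational — you form the difference field $G$ and take its divergence at the end, whereas the paper propagates the divergence through each step — and the resulting $\tilde p$ agrees.
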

\begin{proof}
Recall the formula $\Divx{(\xi \wedge \eta)}= \eta\cdot \rot_x \xi - \xi\cdot \rot_x \eta$, for any smooth vector fields $\xi$ and $\eta$. Therefore we can write
\begin{align*}
\Divx\left( \dfrac{ne}{\omega_c}\wedge \nabla_x k[n] \right) &= \Divx\left[\dfrac{ne}{\omega_c}\wedge\left( \sigma\dfrac{\nabla_x n}{n} -\dfrac{q}{m}E \right) \right]\\
&=\Divx\left( n\dfrac{E\wedge e}{B}\right) + \sigma\, \rot_x\left(\dfrac{e}{\omega_c} \right)\cdot\nabla_x n\\
&= \Divx\left( n\dfrac{E\wedge e}{B}\right) + \sigma\, \Divx\left(n \,\rot_x \left(\dfrac{e}{\omega_c} \right) \right)\\
&= \Divx\left( n\dfrac{E\wedge e}{B}\right) - \sigma\, \Divx\left( n\dfrac{\nabla_x \omega_c \wedge e}{\omega_c ^2} - \dfrac{n}{\omega_c}\rot_x e \right)\\
&= \Divx\left( n v_{\wedge D} + n v_{GD} + \dfrac{\sigma n}{\omega_c}(I_3 -e\otimes e)\rot_x e\right) + \Divx\left(\dfrac{\sigma n}{\omega_c}(e\cdot\rot_x e)e \right).
\end{align*}
Notice that we can write 
\[
(I_3 -e\otimes e)\rot_x e = e\wedge (\rot_x e \wedge e) = e\wedge[(\partial_x e - {^t}\partial_x e)e] = e \wedge \partial_x e e,
\]
implying that
\[
\sigma \dfrac{n}{\omega_c} (I_3 - e\otimes e)\rot_x e = - \sigma \dfrac{n}{\omega_c} \partial_x e e \wedge e = n v_{CD}.
\]
Finally, we obtain
\[
\Divx\left( \dfrac{ne}{\omega_c}\wedge \nabla_x k[n] \right) = \Divx(n\calV[n]) + Be\cdot \nabla_x\left[ \dfrac{\sigma n}{B\omega_c}(e\cdot \rot_x e) \right],
\]
and our conclusion follows.
\end{proof}
\section{Convergence result}
\label{Conve}
We concentrate now on the asymptotic behavior as $\eps \searrow 0$ of the family of weak solutions $(f^\eps, E[f^\eps])_{\eps>0}$ of the VPFP system \eqref{equ:VPFP-Scale}, \eqref{equ:PoissonEpsi}, and \eqref{equ:Initial} and we establish rigorously the connection to the fluid model \eqref{equ:gyro-kinetic}, \eqref{equ:constraint}, and \eqref{equ:PoissonLimit}. \\
We are looking a model for the concentration $n^\eps = n[f^\eps] =\intvt{f^\eps}$, similar to the equation \eqref{equ:gyro-kinetic} of the limit concentration $n$ and we perform the balance of the relative entropy between $n^\eps$ and $n$. As usual, these computations require the smoothness of the solution for the limit model. We justify the asymptotic behavior of  $(f^\eps, E[f^\eps])_{\eps>0}$ when $\eps \searrow 0$, provided that there is a regular solution $(n, E[n] = -\nabla_x\Phi[n])$ for the fluid model \eqref{equ:gyro-kinetic}, \eqref{equ:constraint}, and \eqref{equ:PoissonLimit}. We do not concentrate on the well posedness of this fluid model, nevertheless we refer to Section 8 for some examples of regular solutions. We are working with weak solutions $(f^\eps,E[f^\eps])_{\eps>0}$.

The balance for the number of particles writes
\begin{equation}
\label{equ:ParticleDens}
\partial_t n^\eps + \dfrac{1}{\eps}\Divx j^\eps =0,\quad j^\eps = j[f^\eps]=\int_{\R^3}{f^\eps v}\mathrm{d}v.
\end{equation}
We are using the balance momentum as well
\begin{equation}
\label{equ:EquMomentum}
\eps \partial_t j^\eps + \Divx \int_{\R^3}{f^\eps v\otimes v}\mathrm{d}v - \dfrac{q}{m}n^\eps E[f^\eps] - \dfrac{\omega_c}{\eps}j^\eps \wedge e = - \dfrac{j^\eps}{\tau},
\end{equation}
which allows us to express the orthogonal component of $j^\eps$
\begin{align*}
\dfrac{j^\eps -(j^\eps\cdot e)e}{\eps} &= \dfrac{n^\eps e}{\omega_c}\wedge \left(\sigma \dfrac{\nabla_x n^\eps}{n^\eps}+ \dfrac{q}{m}\nabla_x \Phi[f^\eps] \right)\\
&\quad + \dfrac{e}{\omega_c}\wedge \left[ \Divx\int_{\R^3}{(\sigma \nabla_v f^\eps + v f^\eps)\otimes v}\mathrm{d}v + \eps\partial_t j^\eps + \dfrac{j^\eps}{\tau}  \right]\\
&= \dfrac{n^\eps e}{\omega_c}\wedge \nabla_x k[n^\eps] + \dfrac{e}{\omega_c}\wedge F^\eps ,
\end{align*}
where we denote
\[
F^\eps =  \Divx\int_{\R^3}{(\sigma \nabla_v f^\eps + v f^\eps)\otimes v}\mathrm{d}v + \eps\partial_t j^\eps + \dfrac{j^\eps}{\tau},
\]
and in the above computation, we have used that $\Divx\intvt{\sigma \nabla_v f^\eps \otimes v} = - \sigma \nabla_x n^\eps$.\\
Observe that 
\begin{align*}
\dfrac{1}{\eps}\Divx j^\eps &= \Divx \dfrac{j^\eps - (j^\eps\cdot e)e}{\eps} + \Divx\left[ \dfrac{(j^\eps\cdot e)Be}{B\eps}\right]\\
&= \Divx\left( \dfrac{n^\eps e}{\omega_c}\wedge \nabla_x k[n^\eps] \right) + \Divx\left( \dfrac{e}{\omega_c}\wedge F^\eps\right)+ Be\cdot \nabla_x \left[ \dfrac{(j^\eps\cdot e)}{B\eps}\right],
\end{align*}
and finally, thanks to \eqref{equ:ParticleDens}, we obtain a similar model for $n^\eps$, as in \eqref{equ:gyro-kinetic}
\begin{equation}
\label{equ:EquDensityEps}
\partial_t n^\eps + \Divx\left( \dfrac{n^\eps e}{\omega_c}\wedge \nabla_x k[n^\eps] \right) + \Divx\left( \dfrac{e}{\omega_c}\wedge F^\eps\right)+ Be\cdot \nabla_x p^\eps =0,\quad p^\eps = \dfrac{j^\eps\cdot e}{B\eps}.
\end{equation}
We are also looking for a equation, analogous to \eqref{equ:constraint}, in order to complete the evolution equation \eqref{equ:EquDensityEps}, involving the Lagrange multiplier $p^\eps$. Considering the parallel component in the momentum balance \eqref{equ:EquMomentum}, we obtain
\[
\sigma e\cdot \nabla_x n^\eps + \dfrac{q}{m} n^\eps e\cdot \nabla_x \Phi[n^\eps] + e\cdot F^\eps =0.
\]
Thanks to \eqref{equ:constraint}, the above equation also writes
\begin{equation}
\label{equ:EquParallel}
e\cdot \nabla_x \left(\sigma \dfrac{n^\eps -n}{n} + \dfrac{q}{m}(\Phi[n^\eps]-\Phi[n]) \right) + \dfrac{e}{n}\cdot F^\eps = \dfrac{q}{m}\dfrac{(n^\eps -n)(E[n^\eps]-E[n])}{n}\cdot e.
\end{equation}
We intend to estimate the modulated energy of $n^\eps$ with respect to $n$  by writing $\calE[n^\eps|n]$ as
\begin{align}
\label{equ:EntropyDens}
\calE[n^\eps|n] &= \sigma \int_{\R^3}{n h\left(\dfrac{n^\eps}{n} \right)}\mathrm{d}x + \dfrac{\epsilon_0}{2m}\int_{\R^3}{|\nabla_x \Phi[n^\eps]-\nabla_x\Phi[n]|^2}\mathrm{d}x\nonumber\\
&= \int_{\R^3}{(\sigma n^\eps \ln n^\eps +\dfrac{\epsilon_0}{2m} |\nabla_x\Phi[n^\eps]|^2)}\mathrm{d}x - \int_{\R^3}{(\sigma n \ln n +\dfrac{\epsilon_0}{2m} |\nabla_x\Phi[n]|^2)}\mathrm{d}x\nonumber\\
&\quad - \int_{\R^3}{\left\{ \sigma(1+\ln n)+ \dfrac{q}{m}\Phi[n]\right\}(n^\eps -n)}\mathrm{d}x\nonumber\\
&:= \calE[n^\eps] -\calE[n] - \int_{\R^3}{k[n](n^\eps -n)}\mathrm{d}x.
\end{align}
We introduce as well the modulated energy of $f^\eps$ with respect to $n^\eps M$, given by
\begin{align*}
\sigma&\int_{\R^3}\int_{\R^3}{n^\eps M h\left(\dfrac{f^\eps}{n^\eps M} \right)}\mathrm{d}v\mathrm{d}x + \dfrac{\epsilon_0}{2m}\int_{\R^3}{\underbrace{|\nabla_x \Phi[f^\eps]-\nabla_x\Phi[n^\eps M]|^2}_{=0}}\mathrm{d}x\\
&= \sigma \int_{\R^3}\int_{\R^3}{f^\eps \ln f^\eps - f^\eps \ln n^\eps + f^\eps \ln (2\pi\sigma)^{3/2}+ f^\eps \frac{|v|^2}{2\sigma}}\mathrm{d}v\mathrm{d}x\\
&= \int_{\R^3}\int_{\R^3}{\sigma f^\eps \ln f^\eps + f^\eps \frac{|v|^2}{2}}\mathrm{d}v\mathrm{d}x + \dfrac{\epsilon_0}{2m}\int_{\R^3}{|\nabla_x \Phi[f^\eps]|^2}\mathrm{d}x \\
&\quad- \int_{\R^3}{\sigma n^\eps \ln n^\eps}\mathrm{d}x - \dfrac{\epsilon_0}{2m}\int_{\R^3}{|\nabla_x \Phi[n^\eps]|^2}\mathrm{d}x + \sigma \ln(2\pi\sigma)^{3/2}\int_{\R^3}\int_{\R^3}{f^\eps}\mathrm{d}v\mathrm{d}x\\
&= \calE[f^\eps] - \calE[n^\eps]+ \sigma \ln(2\pi\sigma)^{3/2}\int_{\R^3}\int_{\R^3}{f^\eps}\mathrm{d}v\mathrm{d}x.
\end{align*}
Thanks to the free energy balance \eqref{equ:EquFreeEne} and mass conservation of \eqref{equ:VPFP-Scale}, we obtain
\begin{align}
\label{equ:BalanEnerDens}
\calE[n^\eps(t)] - \calE[n^\eps(0)] & + \sigma\int_{\R^3}\int_{\R^3}{n^\eps(t) M h\left(\dfrac{f^\eps(t)}{n^\eps(t) M} \right)}\mathrm{d}v\mathrm{d}x \\
& -\sigma\int_{\R^3}\int_{\R^3}{n^\eps(0) M h\left(\dfrac{f^\eps(0)}{n^\eps(0) M} \right)}\mathrm{d}v\mathrm{d}x\nonumber\\
 &=  - \dfrac{1}{\eps\tau}\int_0^t\int_{\R^3}\int_{\R^3}{\dfrac{|\sigma \nabla_v f^\eps + v f^\eps|^2}{f^\eps}}\mathrm{d}v\mathrm{d}x\mathrm{d}s\nonumber.
\end{align}
Thanks to Proposition \ref{BalLiMod} and combining \eqref{equ:EntropyDens}, \eqref{equ:BalanEnerDens} leads to
\begin{align}
\label{BalModEnerDens}
&\calE[n^\eps(t)|n(t)] +  \sigma\int_{\R^3}\int_{\R^3}{n^\eps(t) M h\left(\dfrac{f^\eps(t)}{n^\eps(t) M} \right)}\mathrm{d}v\mathrm{d}x + \dfrac{1}{\eps\tau}\int_0^t\int_{\R^3}\int_{\R^3}{\dfrac{|\sigma \nabla_v f^\eps + v f^\eps|^2}{f^\eps}}\mathrm{d}v\mathrm{d}x\mathrm{d}s\nonumber\\
&= \calE[n^\eps(0)|n(0)] +  \sigma\int_{\R^3}\int_{\R^3}{n^\eps(0) M h\left(\dfrac{f^\eps(0)}{n^\eps(0) M} \right)}\mathrm{d}v\mathrm{d}x - \int_{0}^{t}\dfrac{\mathrm{d} }{\mathrm{d} s}\int_{\R^3}{k[n](n^\eps -n)}\mathrm{d}x\mathrm{d}s.
\end{align}

The next task is to evaluate the time derivative of $\intxt{k[n](n^\eps -n)}$. Notice that for any smooth concentration $n$, we can write: 
\begin{align*}
\dfrac{n e}{\omega_c} \wedge \nabla_x k[n] 
&= \dfrac{\sigma e}{\omega_c}\wedge \nabla_x n + \dfrac{n e}{B}\wedge \nabla_x \Phi[n]\\
&= n V[n] - \sigma \rot_x \left( \dfrac{ne}{\omega_c} \right),
\end{align*}
where $V[n] = \sigma \rot_x\left(\frac{e}{\omega_c} \right)+\frac{e\wedge \nabla_x\Phi[n]}{B}$. Clearly, we have
\begin{equation}
\label{equ:EquDivVit}
\Divx\left( \dfrac{ne}{\omega_c}\wedge \nabla_x k[n]\right) = \Divx(nV[n]).
\end{equation}
\begin{pro}
\label{TimeDeri}
With the notations from \eqref{equ:gyro-kinetic}, \eqref{equ:constraint}, \eqref{equ:EquDensityEps}, and \eqref{equ:EquParallel}, we have the following equality
\begin{align*}
&\dfrac{\mathrm{d}}{\mathrm{d} t}\int_{\R^3}{k[n(t)](n^\eps(t,x)-n(t,x))}\mathrm{d}x \\&= \int_{\R^3}{\left(p\dfrac{Be}{n} + \dfrac{e}{\omega_c}\wedge \nabla_x k[n] \right)\cdot \left( \dfrac{q}{m}(n^\eps -n)(E[n^\eps]-E[n]) -F^\eps \right)}\mathrm{d}x.
\end{align*}
\end{pro}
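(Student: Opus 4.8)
The plan is to differentiate the product under the integral sign, writing $\frac{\mathrm{d}}{\mathrm{d}t}\int_{\R^3}k[n](n^\eps-n)\,\mathrm{d}x = I + II$ with $I := \int_{\R^3}\partial_t k[n]\,(n^\eps-n)\,\mathrm{d}x$ and $II := \int_{\R^3}k[n]\,\partial_t(n^\eps-n)\,\mathrm{d}x$, and to simplify the two pieces separately before recombining. As in Lemma \ref{KinEne}, the computation is carried out for smooth solutions and extends to the weak solutions at hand by inserting suitable test functions. Throughout, positivity of $n$ and $n^\eps$ (inherited from regularity and nonnegativity) is used so that the ratios below make sense.

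For $I$, I would use $\partial_t k[n] = \sigma\,\partial_t n/n + \frac{q}{m}\Phi[\partial_t n]$ together with the symmetry of the Coulomb kernel, $\int_{\R^3}\Phi[g]\,h\,\mathrm{d}x = \int_{\R^3}\Phi[h]\,g\,\mathrm{d}x$, to rewrite $I = \int_{\R^3}\partial_t n\,\psi\,\mathrm{d}x$, where $\psi := \sigma\,(n^\eps-n)/n + \frac{q}{m}(\Phi[n^\eps]-\Phi[n])$. Substituting $\partial_t n$ from \eqref{equ:gyro-kinetic}, integrating by parts and using $\Divx(Be)=0$ gives $I = \int_{\R^3}\left(\frac{ne}{\omega_c}\wedge\nabla_x k[n]\right)\cdot\nabla_x\psi\,\mathrm{d}x + \int_{\R^3}p\,Be\cdot\nabla_x\psi\,\mathrm{d}x$. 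The second integral is exactly where \eqref{equ:EquParallel} enters: it identifies $e\cdot\nabla_x\psi$ with $\frac{e}{n}\cdot\left(\frac{q}{m}(n^\eps-n)(E[n^\eps]-E[n])-F^\eps\right)$, which turns that term into $\int_{\R^3}p\,\frac{Be}{n}\cdot\left(\frac{q}{m}(n^\eps-n)(E[n^\eps]-E[n])-F^\eps\right)\mathrm{d}x$, the first half of the claimed right-hand side.

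For $II$, I would insert $\partial_t n^\eps$ from \eqref{equ:EquDensityEps} and $\partial_t n$ from \eqref{equ:gyro-kinetic}, integrate by parts, and observe that the contribution of $-Be\cdot\nabla_x(p^\eps-p)$ becomes $\int_{\R^3}(Be\cdot\nabla_x k[n])(p^\eps-p)\,\mathrm{d}x$, which vanishes by the constraint \eqref{equ:constraint}; this is the step that removes the Lagrange multipliers. What is left is $II = \int_{\R^3}\nabla_x k[n]\cdot\left(\frac{n^\eps e}{\omega_c}\wedge\nabla_x k[n^\eps]-\frac{ne}{\omega_c}\wedge\nabla_x k[n]+\frac{e}{\omega_c}\wedge F^\eps\right)\mathrm{d}x$; the middle term drops since $\nabla_x k[n]\perp\left(\frac{ne}{\omega_c}\wedge\nabla_x k[n]\right)$, and a scalar triple-product identity rewrites the $F^\eps$-term as $-\int_{\R^3}\left(\frac{e}{\omega_c}\wedge\nabla_x k[n]\right)\cdot F^\eps\,\mathrm{d}x$, the $F^\eps$-part of the second half of the right-hand side. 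It remains to check that $\int_{\R^3}\left(\frac{ne}{\omega_c}\wedge\nabla_x k[n]\right)\cdot\nabla_x\psi\,\mathrm{d}x + \int_{\R^3}\nabla_x k[n]\cdot\left(\frac{n^\eps e}{\omega_c}\wedge\nabla_x k[n^\eps]\right)\mathrm{d}x$ equals $\frac{q}{m}\int_{\R^3}\left(\frac{e}{\omega_c}\wedge\nabla_x k[n]\right)\cdot(n^\eps-n)(E[n^\eps]-E[n])\,\mathrm{d}x$. Writing $\nabla_x k[n^\eps] = \nabla_x k[n] + \nabla_x(k[n^\eps]-k[n])$ and using triple-product identities, the second integral becomes $-\int_{\R^3}n^\eps\left(\frac{e}{\omega_c}\wedge\nabla_x k[n]\right)\cdot\nabla_x(k[n^\eps]-k[n])\,\mathrm{d}x$, while the first is $\int_{\R^3}n\left(\frac{e}{\omega_c}\wedge\nabla_x k[n]\right)\cdot\nabla_x\psi\,\mathrm{d}x$; the crux is then the pointwise identity $n\,\nabla_x\!\left(\frac{n^\eps-n}{n}\right) = n^\eps\,\nabla_x\!\ln\frac{n^\eps}{n}$ (both sides equal $\nabla_x n^\eps - \frac{n^\eps}{n}\nabla_x n$), which cancels the entropy parts and leaves only $\frac{q}{m}(n-n^\eps)\nabla_x(\Phi[n^\eps]-\Phi[n]) = \frac{q}{m}(n^\eps-n)(E[n^\eps]-E[n])$ since $E=-\nabla_x\Phi$. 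Collecting the four pieces yields the stated equality.

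The main obstacle, beyond careful bookkeeping of the vector identities and of the orientations of the cross products, is to spot the two cancellations that make the formula clean: the use of the constraint $Be\cdot\nabla_x k[n]=0$ to annihilate the $p^\eps-p$ term in $II$, and the exact identity $n\,\nabla_x((n^\eps-n)/n)=n^\eps\,\nabla_x\ln(n^\eps/n)$ reconciling the linearized potential $\psi$ that appears in $I$ with the genuine difference $k[n^\eps]-k[n]$ that appears in $II$. One should also verify that the integrations by parts and the kernel-symmetry step produce no boundary terms, which follows from the regularity assumed on $n$ and the a priori integrability of $n^\eps$, $j^\eps$ and $F^\eps$.
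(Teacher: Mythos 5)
Your proof is correct and follows essentially the same route as the paper: the same splitting into $\int \partial_t k[n](n^\eps-n)\,\mathrm{d}x$ and $\int k[n]\,\partial_t(n^\eps-n)\,\mathrm{d}x$, the Coulomb-kernel symmetry, the constraint $Be\cdot\nabla_x k[n]=0$ to remove the Lagrange multipliers, and \eqref{equ:EquParallel} for the parallel contribution. The only (harmless) difference is that you cancel the entropic gradients via the pointwise identity $n\,\nabla_x((n^\eps-n)/n)=n^\eps\,\nabla_x\ln(n^\eps/n)$, whereas the paper routes the same cancellation through the vector field $V[n]$ and the identity \eqref{equ:EquDivVit}.
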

\begin{proof}
By straightforward computations, we obtain
\begin{align}
\label{equ:FirstTimeDeri}
&\dfrac{\mathrm{d}}{\mathrm{d} t}\int_{\R^3}{k[n](n^\eps-n)}\mathrm{d}x \nonumber\\
&= \int_{\R^3}{\partial_t n \left(\sigma \dfrac{n^\eps - n}{n} + \dfrac{q}{m}(\Phi[n^\eps]-\Phi[n]) \right)}\mathrm{d}x\\
&\quad + \int_{\R^3}{k[n]\left[ \Divx\left( \dfrac{ne}{\omega_c}\wedge \nabla_x k[n]\right) - \Divx \left( \dfrac{n^\eps e}{\omega_c}\wedge \nabla_x k[n^\eps]\right) - \Divx \left( \dfrac{e}{\omega_c}\wedge F^\eps \right) \right]}\mathrm{d}x,\nonumber
\end{align}
where, in the last integral, we have used the constraint $Be\cdot\nabla_x k[n]=0$, which allows us to deduce that
\[
\int_{\R^3}{k[n](Be\cdot\nabla_x p - Be\cdot\nabla_x p^\eps)}\mathrm{d}x =0.
\]
Using the limit equation \eqref{equ:gyro-kinetic}, and then thanks to \eqref{equ:EquDivVit} and \eqref{equ:EquParallel}, we have
\begin{align}
\label{equ:FirstTimeDerBis}
&\int_{\R^3}{\partial_t n \left(\sigma \dfrac{n^\eps - n}{n} + \dfrac{q}{m}(\Phi[n^\eps]-\Phi[n]) \right)}\mathrm{d}x\\
&= -\int_{\R^3}{\Divx(nV[n])\left(\sigma \dfrac{n^\eps - n}{n} \right)}\mathrm{d}x - \int_{\R^3}{\left( \dfrac{ne}{\omega_c}\wedge \nabla_x k[n]\right)\cdot\dfrac{q}{m}(E[n^\eps]-E[n])}\mathrm{d}x\nonumber\\
&\quad+ \int_{\R^3}{\dfrac{pBe}{n}\cdot \left[\dfrac{q}{m}(n^\eps -n)(E[n^\eps] -E[n])-F^\eps \right]}\mathrm{d}x\nonumber\\
&=-\sigma \int_{\R^3}{\Divx\left( \dfrac{e\wedge \nabla_x\Phi[n]}{B}\right)(n^\eps -n)}\mathrm{d}x - \sigma \int_{\R^3}{\nabla_x \ln n \cdot V[n](n^\eps -n)}\mathrm{d}x\nonumber\\
&\quad- \int_{\R^3}{\left( \dfrac{ne}{B}\wedge \nabla_x k[n]\right)\cdot(E[n^\eps]-E[n])}\mathrm{d}x + \int_{\R^3}{\dfrac{pBe}{n}\cdot \left[\dfrac{q}{m}(n^\eps -n)(E[n^\eps] -E[n])-F^\eps \right]}\mathrm{d}x\nonumber\\
&= - \int_{\R^3}{\nabla_x k[n]\cdot V[n](n^\eps -n)}\mathrm{d}x - \int_{\R^3}{\left( \dfrac{ne}{B}\wedge \nabla_x k[n]\right)\cdot(E[n^\eps]-E[n])}\mathrm{d}x\nonumber\\
&\quad+ \int_{\R^3}{\dfrac{pBe}{n}\cdot \left[\dfrac{q}{m}(n^\eps -n)(E[n^\eps] -E[n])-F^\eps \right]}\mathrm{d}x.\nonumber
\end{align}
Thanks to \eqref{equ:EquDivVit} again, the last integral in \eqref{equ:FirstTimeDeri} writes easily
\begin{align}
\label{equ:LastInte}
&\int_{\R^3}{k[n]\left[ \Divx\left( \dfrac{ne}{\omega_c}\wedge \nabla_x k[n]\right)  - \Divx \left( \dfrac{n^\eps e}{\omega_c}\wedge \nabla_x k[n^\eps]\right) - \Divx \left( \dfrac{e}{\omega_c}\wedge F^\eps \right)\right]}\mathrm{d}x\\
&= \int_{\R^3}{\nabla_x k[n]\cdot (n^\eps V[n^\eps] - n V[n])}\mathrm{d}x - \int_{\R^3}{\left( \dfrac{e}{\omega_c}\wedge \nabla_x k[n]\right)\cdot F^\eps}\mathrm{d}x\nonumber.
\end{align}
Observe that 
\begin{align*}
n^\eps V[n^\eps] - nV[n]- (n^\eps -n)V[n] &= n^\eps  \dfrac{e\wedge \nabla_x \Phi[n^\eps]}{B} - n \dfrac{e\wedge \nabla_x \Phi[n]}{B} - (n^\eps -n)\dfrac{e\wedge \nabla_x \Phi[n]}{B}\\
&= n^\eps  \dfrac{e\wedge (\nabla_x \Phi[n^\eps]-\nabla_x\Phi[n])}{B} ,
\end{align*}
and finally \eqref{equ:FirstTimeDeri}, \eqref{equ:FirstTimeDerBis} and \eqref{equ:LastInte} yield the result.
\end{proof}

Coming back to \eqref{BalModEnerDens}, the modulated energy balance becomes
\begin{align}
\label{BalModEnerDensBis}
\calE[n^\eps(t)|n(t)] &+  \sigma\int_{\R^3}\int_{\R^3}{n^\eps(t) M h\left(\dfrac{f^\eps(t)}{n^\eps(t) M} \right)}\mathrm{d}v\mathrm{d}x + \dfrac{1}{\eps\tau}\int_0^t\int_{\R^3}\int_{\R^3}{\dfrac{|\sigma \nabla_v f^\eps + v f^\eps|^2}{f^\eps}}\mathrm{d}v\mathrm{d}x\mathrm{d}s\nonumber\\
&= \calE[n^\eps(0)|n(0)] +  \sigma\int_{\R^3}\int_{\R^3}{n^\eps(0) M h\left(\dfrac{f^\eps(0)}{n^\eps(0) M} \right)}\mathrm{d}v\mathrm{d}x \\
&\quad- \int_{0}^{t}{\int_{\R^3}{W[n]\cdot \left( \dfrac{q}{m}(n^\eps -n)(E[n^\eps]-E[n])-F^\eps\right)}\mathrm{d}x}\mathrm{d}s,\nonumber
\end{align}
where $W[n] = \frac{pBe}{n}+\frac{e}{\omega_c}\wedge \nabla_x k[n]$. In order to apply the Gronwall lemma, we estimate the terms in the last integral of \eqref{BalModEnerDensBis}. Thanks to the formula
\begin{align*}
 \dfrac{q}{m}(n^\eps -n)(E[n^\eps]-E[n]) &= \dfrac{\epsilon_0}{m}[\Divx(E[n^\eps]-E[n])](E[n^\eps]-E[n])\\
 &= \dfrac{\epsilon_0}{m} \Divx \left( (E[n^\eps]-E[n])\otimes (E[n^\eps]-E[n]) - \dfrac{|E[n^\eps]-E[n]|^2}{2}I_3 \right),
\end{align*}
we obtain
\begin{align*}
- \int_{\R^3}&{W[n]\cdot \left( \dfrac{q}{m}(n^\eps -n)(E[n^\eps]-E[n])\right)}\mathrm{d}x \\
&= \dfrac{\epsilon_0}{m}\int_{\R^3}{ \left((E[n^\eps]-E[n])\otimes (E[n^\eps]-E[n]) - \dfrac{|E[n^\eps]-E[n]|^2}{2}I_3\right): \partial_x W[n]}\mathrm{d}x\\
&\leq \|\partial_x W[n] \|_{L^\infty}\dfrac{\epsilon_0}{m}\left(1+ \dfrac{\sqrt{3}}{2} \right)\int_{\R^3}{|E[n^\eps]-E[n]|^2}\mathrm{d}x,
\end{align*}
where for any matrix $P\in \calM_{3,3}(\R)$, the notation $\| P\|$ stands for $(P:P)^{1/2}$. Similarly, we have for some value $C$ to be precised later on
\begin{align*}
&\int_{\R^3}{W[n]\cdot \Divx \int_{\R^3}{(\sigma \nabla_v f^\eps + f^\eps v)\otimes v}\mathrm{d}v}\mathrm{d}x\\
 &= - \int_{\R^3}{\partial_x W[n]: \int_{\R^3}{(\sigma \nabla_v f^\eps + f^\eps v)\otimes v}\mathrm{d}v}\mathrm{d}x\\
 &\leq \|\partial_x W[n]  \|_{L^\infty}\left[\dfrac{1}{2\eps\tau C}\int_{\R^3\int_{\R^3}}{\dfrac{|\sigma \nabla_v f^\eps + f^\eps v |^2}{f^\eps}}\mathrm{d}v\mathrm{d}x + \eps \tau C \int_{\R^3}\int_{\R^3}{f^\eps \dfrac{|v|^2}{2}}\mathrm{d}v\mathrm{d}x \right].
\end{align*}
Since $j^\eps = \intvt{(\sigma \nabla_v f^\eps + f^\eps v )}$, we have
\begin{align*}
&\int_{0}^{t}\int_{\R^3}{W[n(s)]\cdot (\eps \partial_s j^\eps + \dfrac{j^\eps}{\tau})}\mathrm{d}x\mathrm{d}s \\
&=  \eps \int_{\R^3}{W[n(t)]\cdot j^\eps(t,x)}\mathrm{d}x - \eps \int_{\R^3}{W[n(0)]\cdot j^\eps(0,x)}\mathrm{d}x\\
&\quad+ \int_0^t\int_{\R^3}\int_{\R^3}{[\sigma \nabla_v f^\eps + f^\eps(s,x,v)v]\cdot \left[ \dfrac{W[n(s)]}{\tau} - \eps \partial_s W[n(s)] \right]}\mathrm{d}v\mathrm{d}x\mathrm{d}s\\
&\leq \sqrt{\eps} \int_{\R^3}\int_{\R^3}{(f^\eps(0,x,v) + f^\eps(t,x,v))\left(\eps\dfrac{|v|^2}{2}+ \dfrac{\| W[n]\|^2 _{L^\infty}}{2} \right)}\mathrm{d}v\mathrm{d}x\\
&\quad+ \left[\eps \| \partial_s W[n] \|_{L^\infty} + \dfrac{\|W[n]\|_{L^\infty}}{\tau}  \right]\int_0^t\int_{\R^3}\int_{\R^3}{\left\{ \dfrac{1}{2\eps C}\dfrac{|\sigma \nabla_v f^\eps + f^\eps v |^2}{f^\eps}+ \dfrac{\eps C}{2}f^\eps \right\}}\mathrm{d}v\mathrm{d}x\mathrm{d}s.
\end{align*}
Plugging the above computations in \eqref{BalModEnerDensBis}, the modulated energy balance becomes for $t\in[0,T]$
\begin{align*}
&\calE[n^\eps(t)|n(t)] +  \sigma\int_{\R^3}\int_{\R^3}{n^\eps(t) M h\left(\dfrac{f^\eps(t)}{n^\eps(t) M} \right)}\mathrm{d}v\mathrm{d}x \\& \quad + \dfrac{1}{\eps\tau}\left(1-\dfrac{\|W[n]\|_{L^\infty}}{2C}-\dfrac{\eps \tau \| \partial_s W[n] \|_{L^\infty} }{2C}  -\dfrac{\|W[n]\|_{L^\infty}}{2C}  \right)\int_0^t\int_{\R^3}\int_{\R^3}{\dfrac{|\sigma \nabla_v f^\eps + v f^\eps|^2}{f^\eps}}\mathrm{d}v\mathrm{d}x\mathrm{d}s\\
&\leq \calE[n^\eps(0)|n(0)]+ \sigma\int_{\R^3}\int_{\R^3}{n^\eps(0) M h\left(\dfrac{f^\eps(0)}{n^\eps(0) M} \right)}\mathrm{d}v\mathrm{d}x \\
&\quad+ \|\partial_x W[n] \|_{L^\infty}\left(2+ \sqrt{3} \right)\dfrac{\epsilon_0}{2m}\int_{\R^3}{|E[n^\eps]-E[n]|^2}\mathrm{d}x\\
&\quad+\eps \dfrac{ \tau C}{2}\|\partial_x W[n] \|_{L^\infty} \int_0^T\int_{\R^3}\int_{\R^3}{f^\eps |v|^2}\mathrm{d}v\mathrm{d}x\mathrm{d}t + \sqrt{\eps}\sup_{0\leq t\leq T}\eps \int_{\R^3}\int_{\R^3}{f^\eps |v|^2}\mathrm{d}v\mathrm{d}x\\
&\quad+ \sqrt{\eps} \left[ \sqrt{\eps}\dfrac{CT}{2}\left( \eps \| \partial_s W[n] \|_{L^\infty} + \dfrac{\|W[n]\|_{L^\infty}}{\tau}\right)+ \|W[n]\|_{L^\infty}^2 \right]\int_{\R^3}\int_{\R^3}{f^\eps(0,x,v)}\mathrm{d}v\mathrm{d}x.
\end{align*}
Taking $0 < \eps \leq 1$ and $C$ large enough, we obtain by Lemma \ref{KinEne} and \eqref{equ:EntropyDens}, for some constant $C_T$, $0\leq t\leq T$, $0< \eps \leq 1$:
\begin{align*}
&\calE[n^\eps(t)|n(t)] +  \sigma\int_{\R^3}\int_{\R^3}{n^\eps(t) M h\left(\dfrac{f^\eps(t)}{n^\eps(t) M} \right)}\mathrm{d}v\mathrm{d}x + \dfrac{1}{2\eps\tau}\int_0^t\int_{\R^3}\int_{\R^3}{\dfrac{|\sigma \nabla_v f^\eps + v f^\eps|^2}{f^\eps}}\mathrm{d}v\mathrm{d}x\mathrm{d}s\\
&\leq  \calE[n^\eps(0)|n(0)] +  \sigma\int_{\R^3}\int_{\R^3}{n^\eps(0) M h\left(\dfrac{f^\eps(0)}{n^\eps(0) M} \right)}\mathrm{d}v\mathrm{d}x + C_T \int_{0}^{t}\calE[n^\eps(s)|n(s)]\mathrm{d} s + C_T \sqrt{\eps}.
\end{align*}
Applying Gronwall lemma, we deduce that for $0\leq t\leq T$, $0< \eps \leq 1$:
\begin{align*}
\calE[n^\eps(t)|n(t)] +&  \sigma\int_{\R^3}\int_{\R^3}{n^\eps(t) M h\left(\dfrac{f^\eps(t)}{n^\eps(t) M} \right)}\mathrm{d}v\mathrm{d}x + \dfrac{1}{2\eps\tau}\int_0^t \int_{\R^3}\int_{\R^3}{\dfrac{|\sigma \nabla_v f^\eps + v f^\eps|^2}{f^\eps}}\mathrm{d}v\mathrm{d}x\mathrm{d}s\\
&\leq  \left[\calE[n^\eps(0)|n(0)] +  \sigma\int_{\R^3}\int_{\R^3}{n^\eps(0) M h\left(\dfrac{f^\eps(0)}{n^\eps(0) M} \right)}\mathrm{d}v\mathrm{d}x  + C_T \sqrt{\eps}\right]e^{C_T t}.
\end{align*}
The above inequality says that the particle density $f^\eps$ remains close to the Maxwellian with the same concentration, i.e., $n^\eps(t)M$, and $n^\eps(t)$ stays near $n(t)$, provided that analogous behaviour occur for the initial conditions. Therefore, we are ready to prove our main theorem.
\begin{proof}(of Theorem \ref{MainThm}).
We justify the convergence of $f^\eps$ toward $nM$ in $L^\infty(0,T;L^1(\R^3\times\R^3))$, the other convergences being obvious. We use the Csis\'ar -Kullback inequality in order to control the $L^1$ norm by the relative entropy, cf. \cite{Csi1967, Kul1967}
\[
\int_{\R^n}|g-g_0|\mathrm{d} x \leq 2 \max \left\{ \left( \int_{\R^n}g_0\mathrm{d} x \right)^{1/2}, \left( \int_{\R^n}g\mathrm{d} x\right)^{1/2} \right\}\left(\int_{\R^n}g_0  h\left(\dfrac{g}{g_0} \right)\mathrm{d} x \right)^{1/2},
\]
for any non negative integrable functions $g_0,g: \R^n \to \R$. Applying two times the  Csis\'ar -Kullback inequality, we obtain
\begin{align*}
&\int_{\R^3}\int_{\R^3}{|f^\eps(t,x,v) -n(t,x)M(v)|}\mathrm{d}v\mathrm{d}x \\ &\leq \int_{\R^3}\int_{\R^3}{|f^\eps(t,x,v)- n^\eps(t,x)M(v)|}\mathrm{d}v\mathrm{d}x + \int_{\R^3}{|n^\eps(t,x)-n(t,x)|}\mathrm{d}x\\
&\leq 2 \sqrt{M_{\mathrm{in}}} \left(\int_{\R^3}\int_{\R^3}{ n^\eps(t)M(v) h\left(\dfrac{f^\eps(t)}{n^\eps(t)M} \right)}\mathrm{d}v\mathrm{d}x\right)^{1/2} \\&\quad + 2 \max\left\{ \sqrt{M_{\mathrm{in}}},\sqrt{\|n_{\mathrm{in}}\|_{L^1(\R^3)}} \right\} \left( \int_{\R^3}{n(t) h \left(\dfrac{n^\eps(t)}{n(t)}\right)}\mathrm{d}x  \right)^{1/2}  \to 0 ,\,\,\mathrm{as}\,\eps\searrow 0.
\end{align*}
\end{proof}

\section{Reformulation of the limit model}
\label{RefLiMod}
We intend to find an equivalent formulation for \eqref{equ:gyro-kinetic}, \eqref{equ:constraint} by eliminating the Lagrange multiplier $p$ which appears in \eqref{equ:gyro-kinetic}. For doing that, we will average along the characteristic flow of the magnetic field cf. \cite{BogMit61, BosTraEquSin, BosAsyAna, BosGuiCen3D, BosFinHauCRAS, BosFin16, BosSIAM09}. Let us recall briefly the definition of the average operators along a characteristic flow for functions and vector fields cf. \cite{BosSIAM16}. Consider a smooth, divergence free vector field $b=b(y):\R^m \to \R^m$
\begin{equation}
\label{equ:EquLipDiv}
b\in W^{1,\infty}_{\mathrm{loc}}(\R^m),\quad \mathrm{div}_y b =0,
\end{equation}
with at most linear growth at infinity
\begin{equation}
\label{equ:EquGrowth}
\exists C>0\,\,\text{such that}\,\, |b(y)|\leq C(1+|y|),\,\,y\in\R^m .
\end{equation}
We denote by $Y(s;y)$ the characteristic flow associated to $b$
\[
\dfrac{\mathrm{d}Y}{\mathrm{d}s} = b(Y(s;y)),\quad Y(0;y) = y,\,\, s\in\R,\,\, y\in\R^m.
\]
Under the above hypothese, this flow has the regularity $Y\in W^{1,\infty}_{\mathrm{loc}}(\R\times\R^m)$ and is measure preserving. We concentrate on periodic characteristic flows (the tokamak characteristic flows are periodic, with uniform period) that is:
\[
\exists S>0\,\,\text{such that}\,\, Y(S;y)=y,\,\,y\in\R^m.
\]
For any function $u=u(y):\R^m\to \R$, we define the average $\left< u\right>$ along the flow of $b\cdot\nabla_y$ by
\[
\left< u\right> (y) = \dfrac{1}{S} \int_0^S{u(Y(s;y))}\mathrm{d}s,\,\, y\in\R^m.
\]
When applied to $L^2(\R^m)$ functions, the above operator coincides with the orthogonal projection in $L^2(\R^m)$, over the subspace of constant functions along the flow of $b\cdot\nabla_y$, cf. \cite{BosTraEquSin}. Indeed, it is easily seen that for any $y\in\R^m$, $h\in\R$
\[
\left< u\right> (Y(h;y)) = \dfrac{1}{S} \int_{0}^{S}u(Y(s;Y(h;y)))\mathrm{d}s = \dfrac{1}{S} \int_{0}^{S}u(Y(s+h;y))\mathrm{d}s = \left< u\right>(y),
\]
and for any $\psi\in L^2(\R^m)$ which is constant along the flow $Y$, we have
\begin{align*}
\int_{\R^3}{u(y)\psi(y)}\mathrm{d}y 
&= \int_{\R^3}{u(Y(s;y))\psi(y)}\mathrm{d}y\\
&= \int_{\R^3}{\dfrac{1}{S}\int_0^S{ u(Y(s;y))}\mathrm{d}s \psi(y)}\mathrm{d}y\\
&= \int_{\R^3}{\left< u\right>(y)\psi(y)}\mathrm{d}y.
\end{align*}
For any vector field $c = c(y):\R^m \to \R^m$, we define the average $\left< c\right>$ along the flow of $b\cdot\nabla_y$ by
\[
\left< c\right> = \dfrac{1}{S}\int_0^S{\partial Y(-s;Y(s;\cdot))c(Y(s;\cdot))}\mathrm{d}s.
\]
Notice that the family of transformations $c\to \partial Y(-s;Y(s;\cdot))c(Y(s;\cdot)) $, $s\in\R$, is a one parameter group. The average operators for functions and vector fields are related by the following formulas:
\begin{equation}
\label{equ:AveConsFlowY}
\left< c\cdot \nabla \psi\right> = \left< c\right>\cdot\nabla \psi,
\end{equation}
for any function $\psi$ which is constant along the flow $Y$ and 
\begin{equation}
\label{equ:AveInvoFieldb}
\left< a\cdot\nabla\theta\right> = a\cdot \nabla \left< \theta\right>,
\end{equation}
for any vector field $a$ which is in involution with respect to $b$, that is, their Poisson bracket vanishes
\[
[a,b] : = (a\cdot\nabla_y)b - (b \cdot \nabla_y )a  =0.
\]
 Indeed, as $\psi(Y(s;\cdot)) =\psi, s\in\R$, we have $^t \partial Y(s;y)(\nabla\psi)(Y(s;y)) = \nabla\psi(y), s\in\R$ and therefore
\begin{align*}
\left< c\right>\cdot\nabla \psi &= \dfrac{1}{S} \int_0^S{ \partial Y(-s;Y(s;\cdot))c(Y(s;\cdot))}\mathrm{d}s \cdot \nabla \psi \\
&= \dfrac{1}{S} \int_0^S{ \partial Y(-s;Y(s;\cdot))c(Y(s;\cdot)) \cdot {^t} \partial Y(s;\cdot)(\nabla\psi)(Y(s;\cdot)) }\mathrm{d}s\\
&= \dfrac{1}{S} \int_0^S{ (c\cdot \nabla \psi)(Y(s;\cdot))}\mathrm{d}s \\
&= \left< c\cdot\nabla\psi\right>.
\end{align*}
In the previous computations, we utilized the equality
$
 Y(-s;Y(s;y)) = y,\,\,y\in\R^m
$
which, upon differentiation with respect to $y$, implies
$
\partial_y Y(-s;Y(s;\cdot)\partial_y Y(s;\cdot) = I_m.
$
Similarly, the condition $[a,b] =0$ expresses the commutation between the flows associated to the vector fields $a$ and $b$
\begin{equation}
\label{equ:EquComFlow}
Z(h;Y(s;y)) = Y(s;Z(h;y)),\,\, h,s \in\R, \,\, y\in\R^m,
\end{equation}
where $Z(h;y)$ denotes the characteristic flow associated to $a$
\[
\dfrac{\mathrm{d}}{\mathrm{d}h}Z(h;y) = a(Z(h;y)),\,\,(h,y)\in\R\times\R^m.
\]
Taking the derivative of \eqref{equ:EquComFlow} with respect to $h$ at $h=0$, we obtain
\[
a(Y(s;y)) = \dfrac{\mathrm{d}}{\mathrm{d}h}|_{h=0}Z(h;Y(s;y)) =  \dfrac{\mathrm{d}}{\mathrm{d}h}|_{h=0}Y(s;Z(h;y)) = \partial_y Y(s;y)a(y),\,\,(s,y)\in (\R\times\R^m).
\]
Hence we have
\begin{align*}
\left< a\cdot\nabla\theta \right> &= \dfrac{1}{S}\int_0^S{a(Y(s;\cdot))\cdot(\nabla\theta)(Y(s;\cdot))}\mathrm{d}s\\
&=  \dfrac{1}{S}\int_0^S{ a\cdot {^t}\partial_y Y(s;\cdot)(\nabla\theta)(Y(s;\cdot))}\mathrm{d}s\\
&= \dfrac{1}{S}\int_0^S{a\cdot \nabla(\theta(Y(s;\cdot)))}\mathrm{d}s\\
&= a\cdot\nabla \left<\theta\right>.
\end{align*}

We come back to the limit model \eqref{equ:gyro-kinetic}, \eqref{equ:constraint} and we consider a smooth magnetic field $Be\cdot \nabla_x$, whose characteristic flow is periodic, with a uniform period $S$. If we denote by $X=X(s;x)$ the flow of the magnetic field, we have by $S$ periodicity
\[
\left< Be\cdot \nabla_x p \right> = \dfrac{1}{S}\int_0^S{(Be\cdot \nabla_x p)(X(s;\cdot))}\mathrm{d}s= \dfrac{1}{S}\int_0^S{\dfrac{\mathrm{d}}{\mathrm{d}s}\left\{ p(X(s;\cdot)) \right\}}\mathrm{d}s =0.
\]
Therefore the  Lagrange multiplier $p$ can be eliminated by taking the average in \eqref{equ:gyro-kinetic} 
\begin{equation}
\label{equ:EquAveLim}
\partial_t \left< n \right> + \left< \Divx\left( \dfrac{ne}{\omega_c}\wedge \nabla_x k[n]\right) \right> =0.
\end{equation}
The difficulty task is how to express the average of the divergence term, with respect to $\left<n\right>$, such that we get a model for the new unknown $\left<n\right>$.
\begin{pro}
\label{ZeroAve}
For any zero average function $\alpha$, and constant along the flow $X$ function $\psi$, we have
\[
\left< \Divx \left( \dfrac{\alpha e}{B}\wedge \nabla\psi \right)\right>=0.
\]
\end{pro}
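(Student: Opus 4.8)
The plan is to turn the hypothesis $\left<\alpha\right>=0$ into a representation formula for $\alpha$ and then to show that the field whose divergence we average is, up to a pure transport term along the magnetic lines, the curl of a gradient — so that everything collapses to something with zero average by periodicity of the flow.

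First I would use the solvability of the transport equation: since $X$ is periodic with uniform period $S$, the zero-average functions are exactly the range of $Be\cdot\nabla_x$, and explicitly $\phi(x):=-\tfrac1S\int_0^S\!\int_0^s\alpha(X(\sigma;x))\,\mathrm{d}\sigma\,\mathrm{d}s$ is smooth (because $B,e$, hence $X$, are smooth) and satisfies $Be\cdot\nabla_x\phi=\alpha-\left<\alpha\right>=\alpha$. Putting $\beta:=\tfrac{e}{B}\wedge\nabla\psi$, the field under study is $\tfrac{\alpha e}{B}\wedge\nabla\psi=(Be\cdot\nabla_x\phi)\,\beta$, and I would expand $\Divx\big((Be\cdot\nabla_x\phi)\beta\big)=(Be\cdot\nabla_x\phi)\,\Divx\beta+\beta\cdot\nabla_x(Be\cdot\nabla_x\phi)$, then rewrite the last term with the commutator identity $\beta\cdot\nabla_x(Be\cdot\nabla_x\phi)=Be\cdot\nabla_x(\beta\cdot\nabla_x\phi)-[Be,\beta]\cdot\nabla_x\phi$ (the Hessian of $\phi$ cancels by symmetry). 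Since $\Divx(Be)=0$, the bracket splits as $[Be,\beta]=\Divx(\beta\otimes Be-Be\otimes\beta)+(\Divx\beta)\,Be$, so the two terms carrying $\Divx\beta$ cancel, leaving $\Divx\big(\tfrac{\alpha e}{B}\wedge\nabla\psi\big)=Be\cdot\nabla_x(\beta\cdot\nabla_x\phi)-\big[\Divx(\beta\otimes Be-Be\otimes\beta)\big]\cdot\nabla_x\phi$.

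The decisive observation is that for the antisymmetric tensor $\beta\otimes Be-Be\otimes\beta$ one has $\Divx(\beta\otimes Be-Be\otimes\beta)=\rot_x(\beta\wedge Be)$, and $\beta\wedge Be$ telescopes: by $(u\wedge v)\wedge w=(u\cdot w)v-(v\cdot w)u$ with $u=\tfrac{e}{B}$, $v=\nabla\psi$, $w=Be$, together with $|e|=1$ and $Be\cdot\nabla_x\psi=0$ (which is exactly the assumption that $\psi$ is constant along $X$), we get $\beta\wedge Be=(\tfrac{e}{B}\cdot Be)\,\nabla\psi-(\nabla\psi\cdot Be)\,\tfrac{e}{B}=\nabla\psi$. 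Hence $\Divx(\beta\otimes Be-Be\otimes\beta)=\rot_x\nabla\psi=0$, so $\Divx\big(\tfrac{\alpha e}{B}\wedge\nabla\psi\big)=Be\cdot\nabla_x(\beta\cdot\nabla_x\phi)$, and finally $\left<\Divx\big(\tfrac{\alpha e}{B}\wedge\nabla\psi\big)\right>=\left<Be\cdot\nabla_x(\beta\cdot\nabla_x\phi)\right>=0$, by the very same telescoping along the periodic flow that gives $\left<Be\cdot\nabla_x p\right>=0$.

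The only genuinely non-routine point is the reorganization in the second step: it is the combination of writing $\alpha=Be\cdot\nabla_x\phi$ with the commutator rearrangement that produces the cancellation of the $\Divx\beta$ contributions, after which the pointwise identity $\beta\wedge Be=\nabla\psi$ kills the leftover curl. Everything else is vector calculus; the only care needed is to keep the computation pointwise (so that no growth or integrability control on $\phi$ is required) and to note that the regularity of $\phi$ is free from the smoothness of $B$, $e$ and the flow.
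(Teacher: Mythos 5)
Your proof is correct, and it takes a genuinely different route from the paper's. The paper argues by duality: it tests $\Divx\bigl(\tfrac{\alpha e}{B}\wedge\nabla\psi\bigr)$ against an arbitrary invariant function $\theta$, integrates by parts, and uses the structural fact that $\nabla\theta\wedge\nabla\psi$ is divergence free and orthogonal to the plane $e^{\perp}$, hence equals $\lambda Be$ for some invariant $\lambda$; the pairing then reduces to $\int\lambda\left<\alpha\right>\mathrm{d}x=0$, and the conclusion follows because $\left<\cdot\right>$ is the orthogonal projection onto invariant functions. You instead work pointwise: solving $Be\cdot\nabla_x\phi=\alpha$ (possible exactly because $\left<\alpha\right>=0$ and the flow is $S$-periodic), you show after the commutator rearrangement and the identity $\Divx(\beta\otimes Be-Be\otimes\beta)=\rot_x(\beta\wedge Be)$ with $\beta\wedge Be=\nabla\psi$ that the whole divergence collapses to $Be\cdot\nabla_x(\beta\cdot\nabla_x\phi)$ — I checked each of these vector identities and they hold. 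Your version proves something slightly stronger (the quantity is an exact derivative along the flow, with an explicit primitive, not merely of zero average), and it sidesteps the global integrability and integration-by-parts over $\R^3$ that the paper's weak formulation quietly requires; the price is that you need one more derivative of $\alpha$ and of the flow, since the Hessian of $\phi$ enters the commutator identity. Both arguments are valid at the level of regularity this section assumes.
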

\begin{proof}
We are done if we prove that for any constant along the flow function $\theta$ we have
\begin{equation}
\label{equ:EquZeroWeak}
\int_{\R^3}{\Divx\left( \dfrac{\alpha e}{B}\wedge \nabla\psi \right)\theta(x)}\mathrm{d}x =0.
\end{equation}
As $e\cdot\nabla\psi =0$, $e\cdot \nabla\theta =0$, therefore we have $(I_3 -e\otimes e)(\nabla\theta \wedge \nabla\psi) =0$. The vector field $\nabla\theta \wedge \nabla\psi$ is divergence free
\[
\Divx(\nabla\theta \wedge \nabla\psi) = \nabla \psi \cdot \rot_x(\nabla\theta)-\nabla\theta \cdot \rot_x(\nabla\psi) =0,
\]
and therefore there is a constant function $\lambda$ along the flow $X$ such that $\nabla\theta \wedge \nabla\psi = \lambda Be$. We deduce that
\begin{align*}
\int_{\R^3}{\Divx\left( \dfrac{\alpha e}{B}\wedge \nabla\psi \right)\theta(x)} \mathrm{d}x
&= \int_{\R^3}{(\nabla\theta\wedge\nabla\psi)\cdot\dfrac{\alpha e}{B}}\mathrm{d}x\\
&= \int_{\R^3}{\lambda \alpha}\mathrm{d}x = \int_{\R^3}{\lambda\left<\alpha\right>}\mathrm{d}x =0,
\end{align*}
and therefore \eqref{equ:EquZeroWeak} holds true.
\end{proof}

Applying Proposition \ref{ZeroAve} with the function $k[n]$, which is constant along the flow of $Be\cdot\nabla_x$, we obtain
\[
\left< \Divx\left( \dfrac{ne}{\omega_c}\wedge \nabla_x k[n]\right) \right> = \left< \Divx\left( \dfrac{\left<n\right>e}{\omega_c}\wedge \nabla_x k[n]\right) \right>.
\]
We also need to express $k[n] = \sigma(1+\ln n)+ \frac{q}{m}\Phi[n]$, with respect to $\left< n\right>$, where the concentration $n$ is such that the constraint \eqref{equ:constraint} holds true.
\begin{lemma}
\label{FirstVar}
The first variation of the free energy 
\[
\calE [n] = \int_{\R^3}{\sigma n\ln n + \dfrac{\epsilon_0}{2m}|\nabla_x \Phi[n]|^2}\mathrm{d}x
\]
 is $k[n]=\sigma(1+\ln n)+ \frac{q}{m}\Phi[n]$. For any concentration $n,n_0 \geq 0$, we have
\begin{align*}
\calE [n] - \calE [n_0] - \int_{\R^3}{k[n_0](n-n_0)}\mathrm{d}x &= \sigma \int_{\R^3}{n_0 \left(\dfrac{n}{n_0}\ln \dfrac{n}{n_0} -\dfrac{n}{n_0} +1 \right)}\mathrm{d}x\\
&\quad +\dfrac{\epsilon_0}{2m}\int_{\R^3}{|\nabla_x \Phi[n] - \nabla_x \Phi[n_0]|^2}\mathrm{d}x \geq 0,
\end{align*}
with equality iff $n=n_0$.
\end{lemma}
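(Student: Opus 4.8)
The plan is to treat the two assertions in turn: first that $k[n]$ is the first variation (Gâteaux derivative) of $\calE$, then the exact remainder identity, from which the nonnegativity and the equality case are immediate consequences.

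For the first variation I would fix a concentration $n_0\ge 0$ and an admissible perturbation $g$ (smooth, compactly supported, with $n_0+\tau g\ge 0$ for small $\tau$), and compute $\frac{\mathrm{d}}{\mathrm{d}\tau}\calE[n_0+\tau g]\big|_{\tau=0}$. Differentiating $s\mapsto s\ln s$ gives the entropic contribution $\sigma\int_{\R^3}(1+\ln n_0)\,g\,\mathrm{d}x$. For the electrostatic contribution the key point is that $n\mapsto\Phi[n]$ is linear (convolution with the Newtonian kernel), so $\nabla_x\Phi[n_0+\tau g]=\nabla_x\Phi[n_0]+\tau\nabla_x\Phi[g]$ and the derivative is $\frac{\epsilon_0}{m}\int_{\R^3}\nabla_x\Phi[n_0]\cdot\nabla_x\Phi[g]\,\mathrm{d}x$; integrating by parts and using $-\epsilon_0\Delta_x\Phi[g]=qg$ turns this into $\frac{q}{m}\int_{\R^3}\Phi[n_0]\,g\,\mathrm{d}x$. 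Adding the two contributions yields $\int_{\R^3}k[n_0]\,g\,\mathrm{d}x$, as claimed.

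For the remainder identity I would split $\calE[n]-\calE[n_0]-\int_{\R^3}k[n_0](n-n_0)\,\mathrm{d}x$ into an entropic piece and an electrostatic piece. The entropic piece is $\sigma\int_{\R^3}\big[n\ln n-n_0\ln n_0-(1+\ln n_0)(n-n_0)\big]\mathrm{d}x$, and a one-line rearrangement of the integrand gives exactly $n\ln(n/n_0)-n+n_0=n_0\,h(n/n_0)$ (with the usual conventions $0\ln 0=0$, etc., when $n$ or $n_0$ vanish). For the electrostatic piece I would use the symmetry of the Coulomb bilinear form, equivalently integration by parts together with the Poisson equation $-\epsilon_0\Delta_x\Phi[\cdot]=q(\cdot)$, to write $\frac{q}{m}\int_{\R^3}\Phi[n_0](n-n_0)\,\mathrm{d}x=\frac{\epsilon_0}{m}\int_{\R^3}\nabla_x\Phi[n_0]\cdot(\nabla_x\Phi[n]-\nabla_x\Phi[n_0])\,\mathrm{d}x$; subtracting this from $\frac{\epsilon_0}{2m}\int_{\R^3}(|\nabla_x\Phi[n]|^2-|\nabla_x\Phi[n_0]|^2)\,\mathrm{d}x$ and completing the square produces $\frac{\epsilon_0}{2m}\int_{\R^3}|\nabla_x\Phi[n]-\nabla_x\Phi[n_0]|^2\,\mathrm{d}x$.

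Nonnegativity is then immediate: $h(s)=s\ln s-s+1\ge 0$ for $s\ge 0$ since $h$ is convex with $h(1)=h'(1)=0$, so the entropic term is $\ge 0$, and the electrostatic term is the integral of a square. Equality forces $\nabla_x\Phi[n]=\nabla_x\Phi[n_0]$ together with $n_0\,h(n/n_0)=0$ a.e.; as $h(s)=0$ only at $s=1$, this gives $n=n_0$ a.e. (on $\{n_0=0\}$ the convention makes the integrand $+\infty$ unless $n=0$ there as well). The only genuinely technical point is justifying that the boundary terms at infinity in the integrations by parts vanish, which holds under the decay and integrability built into the standing hypotheses on the concentrations (e.g. $n,n_0\in L^1\cap L^\infty(\R^3)$ with finite electrostatic energy); I do not expect any real obstacle beyond this bookkeeping.
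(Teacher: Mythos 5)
Your proposal is correct and follows essentially the same route as the paper: the exact remainder identity is obtained by the same entropic rearrangement plus completion of the square via the Coulomb bilinear form, and nonnegativity and the equality case follow as you describe. The only cosmetic difference is that you compute the Gâteaux derivative directly term by term, whereas the paper deduces it from the identity by showing the remainder at $n_0+hz$ is $o(h)$; the ingredients (linearity of $n\mapsto\Phi[n]$ and integration by parts with the Poisson equation) are identical.
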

\begin{proof}
By direct computations, one gets
\begin{align*}
\calE [n] &-\calE [n_0] - \int_{\R^3}{k[n_0](n-n_0)}\mathrm{d}x \\
&= \sigma \int_{\R^3}{n_0 \left( \dfrac{n}{n_0}\ln \dfrac{n}{n_0} -\dfrac{n}{n_0} +1 \right)}\mathrm{d}x + \dfrac{\epsilon_0}{2m}\int_{\R^3}{|\nabla_x \Phi[n] - \nabla_x \Phi[n_0]|^2}\mathrm{d}x \geq 0,
\end{align*}
which equality iff $n=n_0$. Obviously, we have
\begin{align*}
&\lim_{h\to 0} \dfrac{\calE [n_0 + hz]-\calE[n_0]-h\int_{\R^3}{k[n_0]z}}{h}\mathrm{d}x\\
&= \lim_{h\to 0} \dfrac{\sigma}{h}\int_{\R^3}{n_0\left( \dfrac{n_0 + hz}{n_0}\ln \dfrac{n_0 +hz}{n_0} -\dfrac{n_0 +hz}{n_0} +1 \right)}\mathrm{d}x + \lim_{h\to 0} \dfrac{\epsilon_0}{2mh}\int_{\R^3}{h^2|\nabla_x\Phi[z]|^2}\mathrm{d}x=0,
\end{align*}
saying that $\lim_{h\to 0} h^{-1}(\calE [n_0 + hz]- \calE[n_0]) = \int_{\R^3}{k[n_0]z}\mathrm{d}x$.
\end{proof}

Thanks to the previous lemma, we deduce that there is at most one concentration $n$ with a given average, such that $Be\cdot \nabla_x k[n] =0$.
\begin{lemma}
\label{Uniqueness}
Let $n_1, n_2$ be two concentrations such that $\left< n_1\right> = \left< n_2 \right> $  and $Be\cdot\nabla_x k[n_1] = Be\cdot \nabla_x k[n_2]$. Therefore we have $n_1 = n_2$. In particular, for a given average, there is at most one concentration $n$ such that $Be\cdot \nabla_x k[n]=0$.
\end{lemma}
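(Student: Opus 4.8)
The plan is to combine the strict convexity of the free energy functional $\calE$, as quantified in Lemma \ref{FirstVar}, with the fact that the averaging operator $\left<\cdot\right>$ along the magnetic flow $X$ is the $L^2$-orthogonal projection onto the functions constant along $X$ (in particular it is self-adjoint), a property recalled just above.

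First I would set $g := k[n_1] - k[n_2]$. The hypothesis $Be\cdot\nabla_x k[n_1] = Be\cdot\nabla_x k[n_2]$ says precisely that $Be\cdot\nabla_x g = 0$, i.e. $g$ is constant along the characteristic flow $X$ of the magnetic field, equivalently $\left<g\right> = g$. Next I would apply Lemma \ref{FirstVar} twice, once with $(n,n_0)=(n_1,n_2)$ and once with $(n,n_0)=(n_2,n_1)$, producing the two nonnegative quantities $R_1 := \calE[n_1] - \calE[n_2] - \int_{\R^3} k[n_2](n_1-n_2)\,\md x \ge 0$ and $R_2 := \calE[n_2] - \calE[n_1] - \int_{\R^3} k[n_1](n_2-n_1)\,\md x \ge 0$, each of which vanishes if and only if $n_1 = n_2$. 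Adding the two identities, the free-energy terms cancel and I am left with
\[
R_1 + R_2 \;=\; \int_{\R^3}\big(k[n_1]-k[n_2]\big)(n_1-n_2)\,\md x \;=\; \int_{\R^3} g\,(n_1-n_2)\,\md x.
\]

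Then I would use the self-adjointness of $\left<\cdot\right>$ together with $\left<g\right>=g$ and the hypothesis $\left<n_1\right>=\left<n_2\right>$ to compute
\[
\int_{\R^3} g\,(n_1-n_2)\,\md x \;=\; \int_{\R^3} \left<g\right>(n_1-n_2)\,\md x \;=\; \int_{\R^3} g\,\left<n_1-n_2\right>\,\md x \;=\; 0 .
\]
Hence $R_1 + R_2 = 0$; since $R_1,R_2\ge 0$ this forces $R_1 = R_2 = 0$, and the equality case of Lemma \ref{FirstVar} gives $n_1 = n_2$. The final assertion is then just the special case $g=0$, obtained by taking $Be\cdot\nabla_x k[n_1] = 0 = Be\cdot\nabla_x k[n_2]$.

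The only genuine subtlety I foresee is the justification of the self-adjointness identity for the precise $g$ and $n_i$ at hand: the computation in the excerpt was written for $L^2$ functions, whereas $g$ carries a logarithmic term $\sigma(\ln n_1 - \ln n_2)$ and the potential difference $\tfrac{q}{m}(\Phi[n_1]-\Phi[n_2])$. This is absorbed by the regularity assumed on the concentrations ($n_i\ge 0$, $n_i\in L^1\cap W^{1,\infty}(\R^3)$), which makes $n_1-n_2 \in L^1\cap L^\infty(\R^3)$ with $\left<n_1-n_2\right>=0$ and allows one to move the average onto the other factor of the pairing; alternatively one argues by density of $C^\infty_c$ and the measure-preserving property of the flow $X$. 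Everything else is routine bookkeeping with Lemma \ref{FirstVar}.
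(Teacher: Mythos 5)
Your proof is correct and follows essentially the same route as the paper: apply Lemma \ref{FirstVar} twice with the roles of $n_1,n_2$ swapped, add the two identities so the free energies cancel and the cross term $\int_{\R^3}(k[n_1]-k[n_2])(n_1-n_2)\,\mathrm{d}x$ equals a sum of nonnegative (relative entropy plus electric energy) terms, then kill that cross term using that $k[n_1]-k[n_2]$ is invariant along the flow while $\left<n_1-n_2\right>=0$. The paper phrases the last step via the projection property of $\left<\cdot\right>$ rather than self-adjointness, but these are the same fact, and your concluding appeal to the equality case of Lemma \ref{FirstVar} matches the paper's conclusion.
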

\begin{proof}
We have by Lemma \ref{FirstVar}
\begin{align*}
\calE [n_1] - \calE [n_2] - \int_{\R^3}{k[n_2](n_1 - n_2)}\mathrm{d}x &= \sigma \int_{\R^3}{n_2 \left(\dfrac{n_1}{n_2}\ln \dfrac{n_1}{n_2} -\dfrac{n_1}{n_2} +1 \right)}\mathrm{d}x\\
&\quad +\dfrac{\epsilon_0}{2m}\int_{\R^3}{|\nabla_x \Phi[n_1] - \nabla_x \Phi[n_2]|^2}\mathrm{d}x,
\end{align*}
and
\begin{align*}
\calE [n_2] - \calE [n_1] - \int_{\R^3}{k[n_1](n_2 - n_1)}\mathrm{d}x &= \sigma \int_{\R^3}{n_1 \left(\dfrac{n_2}{n_1}\ln \dfrac{n_2}{n_1} -\dfrac{n_2}{n_1} +1 \right)}\mathrm{d}x\\
&\quad +\dfrac{\epsilon_0}{2m}\int_{\R^3}{|\nabla_x \Phi[n_2] - \nabla_x \Phi[n_1]|^2}\mathrm{d}x,
\end{align*}
implying that 
\begin{align*}
\int_{\R^3}{(k[n_1]-k[n_2])(n_1 - n_2)}\mathrm{d}x = \sigma \int_{\R^3}{(n_1 -n_2)\ln \left(\dfrac{n_1}{n_2} \right)}\mathrm{d}x +\dfrac{\epsilon_0}{m}\int_{\R^3}{|\nabla_x \Phi[n_2] - \nabla_x \Phi[n_1]|^2}\mathrm{d}x.
\end{align*}
Since $Be\cdot \nabla_x (k[n_1]-k[n_2]) =0$, $\left< n_1 - n_2 \right> =0$, we deduce
\[
\int_{\R^3}{(k[n_1]-k[n_2])(n_1 - n_2)}\mathrm{d}x =0,
\]
and thus $n_1 = n_2$.
\end{proof}

If $n$ is such that $Be\cdot \nabla_x k[n] = 0$, then for any concentration $\bar{n}$ having the same average as $n$ we have
\[
\calE [\bar{n}] \geq \calE [n] + \intxt{k[n](\bar{n} - n)} = \calE [n],
\]
saying that for any given average $a$, the unique concentration $n$ such that $\left< n \right> =a$ and $Be\cdot\nabla_x k[n]=0$, satisfies
\[
\calE [n] = \min_{\left< \bar{n}\right>=a}\calE[\bar{n}].
\]
We denote by $F$ the application which maps $a\in \mathrm{ker}(Be\cdot\nabla_x)$ to $n$ such that $\left< n \right> =a$, $Be\cdot\nabla_x k[n] =0$.
\begin{lemma}
\label{FirstVarBis}
The application $a\in \mathrm{ker}(Be\cdot\nabla_x) \to \calE [n=F(a)] $ is convex and its first variation is $a\to k[n=F(a)]$.
\end{lemma}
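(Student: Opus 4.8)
The plan is to deduce both assertions from the variational characterisation obtained just above, $G(a):=\calE[F(a)]=\min_{\left<\bar n\right>=a}\calE[\bar n]$ for $a\in\mathrm{ker}(Be\cdot\nabla_x)$, together with Lemma \ref{FirstVar}. For convexity I would take $a_0,a_1\in\mathrm{ker}(Be\cdot\nabla_x)$, $\lambda\in[0,1]$, set $n_i=F(a_i)\ge0$, and use the linearity of the averaging operator: the convex combination $\bar n:=(1-\lambda)n_0+\lambda n_1$ is a nonnegative concentration with $\left<\bar n\right>=(1-\lambda)a_0+\lambda a_1$, hence admissible in the minimisation defining $G$ at that point. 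Since $\calE$ is convex in $n$ — the density $n\mapsto\sigma n\ln n$ is convex, and $n\mapsto\frac{\epsilon_0}{2m}\int_{\R^3}|\nabla_x\Phi[n]|^2\mathrm{d}x$ is a nonnegative quadratic form because $n\mapsto\nabla_x\Phi[n]$ is linear — one gets $G((1-\lambda)a_0+\lambda a_1)\le\calE[\bar n]\le(1-\lambda)\calE[n_0]+\lambda\calE[n_1]=(1-\lambda)G(a_0)+\lambda G(a_1)$; the equality case in Lemma \ref{FirstVar} even upgrades this to strict convexity.

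For the first variation, the first step is to check that $k[F(a)]$ is a subgradient of $G$ at $a$. Writing $n=F(a)$, its $k[n]$ is, by the defining property of $F$, constant along the characteristic flow $X$ of $Be\cdot\nabla_x$. Lemma \ref{FirstVar} applied with $n_0=n$ gives, for any $\bar a$ and $\bar n=F(\bar a)$, that $\calE[\bar n]-\calE[n]-\int_{\R^3}k[n](\bar n-n)\mathrm{d}x\ge0$; and the $L^2$-projection property of the averaging operator — $\int_{\R^3}k[n]\,u\,\mathrm{d}x=\int_{\R^3}k[n]\left<u\right>\mathrm{d}x$ for $k[n]$ constant along $X$ — converts $\int_{\R^3}k[n](\bar n-n)\mathrm{d}x$ into $\int_{\R^3}k[n](\left<\bar n\right>-\left<n\right>)\mathrm{d}x=\int_{\R^3}k[n](\bar a-a)\mathrm{d}x$. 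Hence $G(\bar a)\ge G(a)+\int_{\R^3}k[F(a)](\bar a-a)\mathrm{d}x$ for all $\bar a$. Exchanging $a$ and $\bar a$, then taking $\bar a=a+hb$ with $b\in\mathrm{ker}(Be\cdot\nabla_x)$ and $h>0$ and dividing by $h$, I would obtain the two-sided bound $\int_{\R^3}k[F(a)]b\,\mathrm{d}x\le\frac{G(a+hb)-G(a)}{h}\le\int_{\R^3}k[F(a+hb)]b\,\mathrm{d}x$ (reversed for $h<0$), so that passing to $h\to0$ identifies the directional derivative as $\int_{\R^3}k[F(a)]b\,\mathrm{d}x$, that is, the first variation is $a\mapsto k[F(a)]$.

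The main obstacle is this last limit, which needs the continuity at $h=0$ of $h\mapsto\int_{\R^3}k[F(a+hb)]b\,\mathrm{d}x$, i.e. the stability of the constrained minimiser $F$. I would extract this from strict convexity: feeding $n_0=F(a)$, $n=F(a+hb)$ into Lemma \ref{FirstVar} shows that the nonnegative remainder there — the relative entropy of $F(a+hb)$ with respect to $F(a)$ plus $\frac{\epsilon_0}{2m}\|\nabla_x\Phi[F(a+hb)]-\nabla_x\Phi[F(a)]\|_{L^2(\R^3)}^2$ — equals $G(a+hb)-G(a)-h\int_{\R^3}k[F(a)]b\,\mathrm{d}x$, which tends to $0$ as $h\to0$ by continuity of the convex function $G$. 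The delicate point is to upgrade this vanishing of the relative entropy into convergence $F(a+hb)\to F(a)$ in a topology strong enough to pass to the limit inside $\int_{\R^3}k[F(a+hb)]b\,\mathrm{d}x$; this is immediate in the cylindrical setting of Section \ref{AngVectFields}, where the admissible concentrations are bounded above and below, and in general would rely on uniform-integrability estimates for the family $(F(a+hb))_{h}$.
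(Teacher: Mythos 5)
Your proposal follows essentially the same route as the paper: convexity is obtained from the minimisation characterisation $\calE[F(a)]=\min_{\left<\bar n\right>=a}\calE[\bar n]$ together with the convexity of $\calE$ and the linearity of the average, and the first variation from the subgradient inequality of Lemma \ref{FirstVar} combined with the $L^2$-projection property of $\left<\cdot\right>$ applied to $k[F(a)]\in\mathrm{ker}(Be\cdot\nabla_x)$. The only difference is that the paper stops at the one-sided inequality $\calE[F(a+hz)]-\calE[F(a)]\ge h\int_{\R^3}k[F(a)]z\,\mathrm{d}x$ and simply asserts that the limits as $h\searrow 0$ and $h\nearrow 0$ coincide, whereas your two-sided sandwich --- and the accompanying observation that it requires continuity of $h\mapsto\int_{\R^3}k[F(a+hb)]b\,\mathrm{d}x$, i.e.\ stability of the constrained minimiser $F$ --- makes explicit (and honestly flags) a step that the paper leaves implicit.
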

\begin{proof}
Consider $a_1, a_2\in\mathrm{ker}(Be\cdot\nabla_x) $ and $\lambda_1, \lambda_2\in[0,1]$ such that $\lambda_1 + \lambda_2 =1$. We have
\[
\lambda_1 \calE [F(a_1)] + \lambda_2 \calE [F(a_2)] \geq \calE [\lambda_1 F(a_1) + \lambda_2 F(a_2)],
\]
since $\calE$ is convex and 
\[
\calE [F(\lambda_1 a_1 + \lambda_2 a_2)] = \min_{\left< \bar{n}\right> = \lambda_1 a_1 + \lambda_2 a_2}\calE [\bar{n}] \leq \calE [\lambda_1 F(a_1) + \lambda_2 F(a_2)],
\]
because
\[
\left< \lambda_1 F(a_1) + \lambda_2 F(a_2)  \right> = \lambda_1 \left< F(a_1) \right> + \lambda_2 \left< F(a_2)\right> = \lambda_1 a_1 + \lambda_2 a_2.
\]
Consider now $a, z\in \mathrm{ker}(Be\cdot\nabla_x)$ and $h\in\R$. The convexity of $\calE$ implies
\begin{align*}
\calE [F(a+hz)] - \calE[F(a)] &\geq \int_{\R^3}{k[F(a)][F(a+hz)-F(a)]}\mathrm{d}x\\
&= \int_{\R^3}{k[F(a)]\left< F(a+hz)-F(a) \right>}\mathrm{d}x\\
&= h \int_{\R^3}{k[F(a)]z(x)}\mathrm{d}x.
\end{align*}
Passing to the limit when $h\searrow 0$ and $h\nearrow 0$, we deduce that
\[
\lim_{h\to 0}\dfrac{\calE[F(a+hz)]-\calE[F(a)]}{h} = \int_{\R^3}{k[F(a)]z}\mathrm{d}x.
\]
\end{proof}

Combining the results in Proposition \ref{ZeroAve}, Lemma \ref{FirstVarBis}, the limit model \eqref{equ:gyro-kinetic}, \eqref{equ:constraint} becomes
\[
\partial_t a + \left<\Divx\left(\dfrac{ae}{\omega_c}\wedge \nabla_x k[F(a)]\right) \right> =0,\quad n=F(a).
\]
As $k[F(a)]\in \mathrm{ker}(Be\cdot\nabla_x)$, we obtain by \eqref{equ:AveConsFlowY}
\[
\left<\Divx\left(\dfrac{ae}{\omega_c}\wedge \nabla_x k[F(a)]\right)\right> = \left<\rot_x\left( \dfrac{ae}{\omega_c}\right)\cdot \nabla_x k[F(a)] \right> = \left<\rot_x\left( \dfrac{ae}{\omega_c}\right) \right>\cdot\nabla_x k[F(a)],
\]
and therefore the previous limit model also writes
\begin{equation}
\label{equ:AveRot}
\partial_t a + \left< \rot_x\left( \dfrac{ae}{\omega_c}\right) \right>\cdot\nabla_x k[F(a)]=0,\quad n=F(a).
\end{equation}

\section{A commutation formula for angular vector fields}
\label{AngVectFields}
The last step will concern a commutation formula between the operators $\left<\cdot\right>$ and $\rot_x$. We establish this formula for the special class of vector fields which present angle variables. In particular, this formula will apply to a special case of the magnetic field relevant to tokamaks.
\subsection{A commutation formula}
We start with a very simple example. Consider the vector field $b(y) \cdot\nabla_y = y_2\partial_{y_1} - y_1 \partial_{y_2}, y=(y_1,y_2)\in\R^2$, whose characteristic flow is $2\pi$-periodic
\begin{align*}
Y(s;y) = \calR(-s)y = \begin{pmatrix}
\cos s & \sin s\\
-\sin s &  \cos s
\end{pmatrix}y ,\,\,(s,y)\in\R\times\R^2.
\end{align*}
The gradient of any invariant function $\psi$, that is a function satisfying $\psi(Y(s;\cdot)) = \psi, s\in\R$, verifies
\begin{equation}
\label{equ:GradInvFunc}
{^t}\partial Y(s;\cdot)(\nabla\psi)(Y(s;\cdot)) = \nabla\psi,\,\, s\in\R.
\end{equation}
There are other vector fields verifying similar properties. Let us consider the angle $\theta = \theta(y)\in [0,2\pi[$ given by
\[
y_1 = |y|\cos\theta(y),\,\, y_2 = |y|\sin\theta(y),\,\, y\in\R^2 \backslash\left\{ (0,0)\right\}.
\]
The function $\theta$ is smooth in $D = \R^2 \backslash (\R_+ \times \left\{0\right\})$ and we have
\[
\nabla_y\theta = - \dfrac{(y_2,-y_1)}{|y|^2} = -\dfrac{b(y)}{|y|^2},\,\,y\in D.
\]
The function $\theta$ is discontinuous across $\R^\star _+ \times \left\{ 0 \right\}$
\[
\lim_{y_1\to z_1, y_2\searrow 0} \theta(y) = 0,\,\,\,\,\,\lim_{y_1\to z_1, y_2 \nearrow 0} \theta(y) =2\pi,\,\,z_1 >0,
\]
but its gradient, which is well defined on $D$ is the restriction of a smooth vector field on $\R^2 \backslash\left\{ (0,0)\right\}$
\[
\nu(y) = - \dfrac{(y_2,-y_1)}{|y|^2},\,\, y\in \R^2 \backslash\left\{ (0,0)\right\}.
\]
For any $y\in D$ and $|s|$ small enough, we have
\[
\dfrac{\mathrm{d}}{\mathrm{d}s} \theta(Y(s;y)) = b(Y(s;y))\cdot (\nabla\theta)(Y(s;y)) =-1,
\]
implying that $\theta(Y(s;y)) = \theta(y) -s ,y\in D$ and $|s|$ small enough. Taking the gradient with respect to $y$ we obtain
\[
{^t}\partial Y(s;y)(\nabla\theta)(Y(s;y)) = \nabla\theta(y),
\]
or 
\begin{equation}
\label{equ:GradAngVect}
{^t}\partial Y(s;y)\nu(Y(s;y)) = \nu(y),\,\, y\in D,\,\, |s| \,\mathrm{small\,enough}.
\end{equation}
Actually it is easily seen that the previous formula holds true for any $y\in\R^2\backslash\left\{ (0,0)\right\}$ and $s\in\R$. The vector field $\nu$ also satisfies
$
\mathrm{div}_y {^t}\nu (y) =0,
$
but it is not the gradient of a smooth function $\tilde{\theta}$ on $\R^2\backslash\left\{ (0,0)\right\}$, because, in that case, for any $y\in\R^2\backslash\left\{ (0,0)\right\}$, we would obtain
\[
-1 = \dfrac{1}{2\pi}\int_0^{2\pi}{(b\cdot\nu)(Y(s;y))}\mathrm{d}s = \dfrac{1}{2\pi}\int_0^{2\pi}{(b\cdot\nabla\tilde{\theta})(Y(s;y))}\mathrm{d}s =  \dfrac{1}{2\pi}\int_0^{2\pi}{\dfrac{\mathrm{d}}{\mathrm{d}s}\tilde{\theta}(Y(s;y))}\mathrm{d}s =0.
\]

Generally, given a smooth divergence free vector field $b\cdot\nabla_y$ in $\R^3$, with global characteristic flow $Y=Y(s;y), (s,y)\in\R\times\R^3$, we call angular vector field in $D\in\R^3$ any vector field $\nu\cdot\nabla_y$ satisfying
\[
b(y)\cdot \nu(y) = C,\quad {^t}\partial Y(s;y)\nu(Y(s;y)) = \nu(y),\quad \rot_y\nu =0,\,\,(s,y)\in\R\times D,
\]
for some constant $C\in \R^\star$, where $D$ is an open subset of $\R^3$, which is left invariant by the flow i.e., $Y(s;D)=D, s\in\R.$ We intend to establish the following commutation formula.
\begin{pro}
\label{AngField}
Let us consider a vector field $b\cdot\nabla_y$ in $\R^3$ satisfying \eqref{equ:EquLipDiv}, \eqref{equ:EquGrowth} with $S$-periodic characteristic flow $Y=Y(s;y)$, $(s,y)\in \R\times\R^3$. We denote by $\eta \cdot \nabla_y$ the gradient of an invariant function with respect to the flow $Y$, or an angular vector field, in some open subset $D$ of $\R^3$, which is left invariant by the flow $Y$. Therefore, for any $C^1$ function $\alpha = \alpha(y)$, we have
\begin{equation}
\label{equ:AngField}
\left< \nabla_y \alpha \wedge \eta \right> = \nabla_y \left< \alpha\right>\wedge \eta\,\, \mathrm{in}\,\, D.
\end{equation}
In particular, if $\alpha\in\mathrm{ker}(b\cdot\nabla_y)$, then $( \nabla_y \alpha \wedge \eta )\cdot \nabla_y$ is in involution with respect to $b\cdot\nabla_y$ in $D$.
\end{pro}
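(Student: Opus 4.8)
The plan is to treat the two admissible choices of $\eta$ --- the gradient of a flow-invariant function, or an angular vector field --- uniformly, by isolating the single property both enjoy: $\eta$ is covariant under the flow, namely
\begin{equation*}
{}^t\partial Y(s;y)\,\eta(Y(s;y)) = \eta(y),\qquad (s,y)\in\R\times D.\tag{$\star$}
\end{equation*}
For $\eta=\nabla_y\psi$ with $\psi$ invariant this is exactly \eqref{equ:GradInvFunc} (a consequence of $\psi(Y(s;\cdot))=\psi$); for an angular vector field it is built into the definition. Beyond $(\star)$, the only other input I would use is $\mathrm{div}_y b=0$, which makes $Y(s;\cdot)$ measure preserving, hence $\det\partial Y(s;y)\equiv 1$; the hypothesis $\rot_y\eta=0$ plays no role in \eqref{equ:AngField} (it merely ensures that $\nabla_y\alpha\wedge\eta$ is divergence free).

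The core of the argument is a pointwise-in-$s$ identity which, once averaged over a period, gives \eqref{equ:AngField}. First I would record the determinant-one cofactor identity: for $M\in GL_3(\R)$ with $\det M=1$ and $u,v\in\R^3$,
\begin{equation*}
M(u\wedge v) = \big(({}^tM)^{-1}u\big)\wedge\big(({}^tM)^{-1}v\big),
\end{equation*}
proved by pairing against an arbitrary $w$ and using $\langle\xi\wedge\zeta,w\rangle=\det[\,\xi\mid\zeta\mid w\,]$. Applying it with $M=\partial Y(-s;Y(s;y))=[\partial Y(s;y)]^{-1}$, so that $({}^tM)^{-1}={}^t\partial Y(s;y)$, to $u=(\nabla_y\alpha)(Y(s;y))$ and $v=\eta(Y(s;y))$, and then using the chain rule ${}^t\partial Y(s;y)(\nabla_y\alpha)(Y(s;y))=\nabla_y[\alpha(Y(s;y))]$ together with $(\star)$ for the second factor, I obtain
\begin{equation*}
\partial Y(-s;Y(s;y))\big[(\nabla_y\alpha)(Y(s;y))\wedge\eta(Y(s;y))\big]=\nabla_y\big[\alpha(Y(s;y))\big]\wedge\eta(y),\qquad(s,y)\in\R\times D.\tag{$\dagger$}
\end{equation*}
Averaging $(\dagger)$ over $s\in[0,S]$: the left side is $\left<\nabla_y\alpha\wedge\eta\right>$ by the very definition of the average of a vector field, while on the right I pull $\nabla_y$ out of $\frac1S\int_0^S\cdot\,\mathrm{d}s$ to get $\nabla_y\left<\alpha\right>\wedge\eta$, which is \eqref{equ:AngField}.

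For the last assertion I would specialize $(\dagger)$ to $\alpha\in\mathrm{ker}(b\cdot\nabla_y)$: then $s\mapsto\alpha(Y(s;y))$ is constant $\equiv\alpha(y)$, so the right side of $(\dagger)$ equals $(\nabla_y\alpha\wedge\eta)(y)$, independent of $s$. Hence $c:=\nabla_y\alpha\wedge\eta$ satisfies $c(Y(s;y))=\partial Y(s;y)\,c(y)$ for all $s$, i.e. the flow of $b$ transports $c$ onto itself; differentiating at $s=0$ yields $(b\cdot\nabla_y)c=(c\cdot\nabla_y)b$, that is $[\,\nabla_y\alpha\wedge\eta,\,b\,]=0$, the claimed involution. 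I expect the only delicate point to be justifying the interchange of $\nabla_y$ with the $s$-average in \eqref{equ:AngField} at the stated regularity ($Y\in W^{1,\infty}_{\mathrm{loc}}$, $\alpha\in C^1$): there $(\dagger)$ holds for a.e. $y$, which suffices, and the interchange is transparent once $\alpha\in C^2$. Everything else is linear algebra and the chain rule, so no essential obstruction remains.
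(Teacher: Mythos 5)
Your proof is correct, and it takes a genuinely different and markedly shorter route than the paper's. The paper proceeds in three stages: first it proves the involution property for $\alpha\in\mathrm{ker}(b\cdot\nabla_y)$ via the representation $\xi\wedge\eta=(e\otimes M[e]\xi-M[e]\xi\otimes e)\eta$ of Lemma \ref{VectProd} combined with the projected conjugation identity of Lemma \ref{Identity}; then it shows $\left<\nabla\alpha\wedge\eta\right>=0$ when $\left<\alpha\right>=0$, treating the cases $\eta=\nabla\beta$ and $\eta=\nu$ angular separately (the latter through the duality criterion of Lemma \ref{ZeroAveVectField}, which is where the hypothesis $b\cdot\nu=C\neq0$ enters); finally it assembles the general case via the splitting $\alpha=\left<\alpha\right>+(\alpha-\left<\alpha\right>)$. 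You replace all of this by the single pointwise identity $(\dagger)$, obtained from the cofactor formula $M(u\wedge v)=\bigl(({}^tM)^{-1}u\bigr)\wedge\bigl(({}^tM)^{-1}v\bigr)$ for $\det M=1$, the chain rule for $\nabla_y[\alpha\circ Y(s;\cdot)]$, and the covariance $(\star)$ that both admissible choices of $\eta$ share; averaging $(\dagger)$ in $s$ gives \eqref{equ:AngField} at once, and freezing $\alpha\in\mathrm{ker}(b\cdot\nabla_y)$ in $(\dagger)$ gives the involution. I checked the cofactor identity (pairing against $w$ and using $\det\bigl(({}^tM)^{-1}\bigr)=1$), the fact that $\mathrm{div}_yb=0$ forces $\det\partial Y(s;y)\equiv1$, and the identification $({}^tM)^{-1}={}^t\partial Y(s;y)$ for $M=\partial Y(-s;Y(s;y))$: all are sound. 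Your approach is more general in two respects: it treats the two types of $\eta$ uniformly through $(\star)$ alone, and it never invokes $\rot_y\eta=0$ or $b\cdot\nu=C$, so it isolates exactly which hypotheses \eqref{equ:AngField} depends on. What the paper's longer route buys is essentially nothing beyond the auxiliary lemmas themselves, which are used nowhere else; your only genuine obligation, which you correctly flag, is the differentiation under the integral sign on the right-hand side of the averaged $(\dagger)$, harmless at the regularity the paper actually works with (smooth $b$, hence smooth flow).
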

We will use the following lemmas.
\begin{lemma}
\label{VectProd}
We denote by $M[e]$ the matrix of the linear transformation $v \to e\wedge v, v\in\R^3$, that is $M[e]v = e \wedge v, v\in\R^3$. For any $e\in \mathbb{S}^2$, and $\xi, \eta\in \R^3$ such that $\xi\cdot e =0$, we have
\[
\xi\wedge \eta = (e\otimes M[e]\xi - M[e]\xi\otimes e)\eta.
\]
\end{lemma}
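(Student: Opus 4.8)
The statement is a purely algebraic identity in $\mathbb{R}^3$, and the plan is to derive it from two applications of the double cross product formula $a\wedge(b\wedge c) = b\,(a\cdot c) - c\,(a\cdot b)$ together with the cyclic invariance of the scalar triple product, invoking the orthogonality hypothesis $\xi\cdot e = 0$ at the crucial step.

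First I would decompose the vector $w := \xi\wedge\eta$ along $e$ and its orthogonal complement: since $|e| = 1$, the double cross product formula gives $e\wedge(w\wedge e) = w - (e\cdot w)\,e$, hence $w = (e\cdot w)\,e + e\wedge(w\wedge e)$. This reduces the problem to evaluating $(\xi\wedge\eta)\wedge e$. A second application of the double cross product formula yields $(\xi\wedge\eta)\wedge e = -e\wedge(\xi\wedge\eta) = -(e\cdot\eta)\,\xi + (e\cdot\xi)\,\eta$, and here the hypothesis $\xi\cdot e = 0$ removes the last term, leaving $(\xi\wedge\eta)\wedge e = -(e\cdot\eta)\,\xi$. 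Taking the cross product with $e$ then gives $e\wedge\big((\xi\wedge\eta)\wedge e\big) = -(e\cdot\eta)\,(e\wedge\xi)$.

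For the remaining coefficient I would use $e\cdot(\xi\wedge\eta) = (e\wedge\xi)\cdot\eta$, which is the cyclic invariance of the triple product. Combining the two pieces,
\[
\xi\wedge\eta = \big[(e\wedge\xi)\cdot\eta\big]\,e - (e\cdot\eta)\,(e\wedge\xi),
\]
and since $(e\otimes M[e]\xi)\,\eta = e\,\big((e\wedge\xi)\cdot\eta\big)$ and $(M[e]\xi\otimes e)\,\eta = (e\wedge\xi)\,(e\cdot\eta)$, the right-hand side is precisely $\big(e\otimes M[e]\xi - M[e]\xi\otimes e\big)\eta$, which is the claimed identity. There is no genuine obstacle in the argument; the only subtlety is to invoke $\xi\cdot e = 0$ at exactly the right moment, since the term $(e\cdot\xi)\,\eta$ discarded above is the sole place where the hypothesis enters, and without it the formula would be false.
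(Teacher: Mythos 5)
Your proof is correct, and it is essentially the same elementary verification as the paper's: both rely on the double cross product formula and the cyclic invariance of the scalar triple product, and both invoke $\xi\cdot e=0$ at the single place where it is needed (the paper expands the right-hand side and simplifies to $\xi\wedge\eta$, whereas you decompose $\xi\wedge\eta$ along $e$ and its orthogonal complement and arrive at the right-hand side). The two computations are the same identity read in opposite directions, so no further comparison is needed.
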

\begin{proof}
By direct computations one gets
\begin{align*}
(e\otimes M[e]\xi - M[e]\xi\otimes e)\eta &= ((e\wedge \xi)\cdot\eta)e - (e\cdot\eta)e\wedge \xi \\
&= ((\xi\wedge \eta)\cdot e)e + (\eta\cdot e)\xi\wedge e \\
&= e\otimes e(\xi\wedge (\eta-(\eta\cdot e)e)) + (\eta\cdot e)\xi\wedge e \\
&= \xi\wedge (\eta-(\eta\cdot e)e) + (\eta\cdot e)\xi\wedge e \\
&= \xi \wedge \eta,
\end{align*}
where we have used that $\xi \wedge (\eta - (\eta\cdot e)e) \in \R e$, since $\xi \cdot e =0$.
\end{proof}

For any function or vector field, the notation $F_s$ stands for $F\circ Y(s;\cdot)$.
\begin{lemma}
\label{Identity}
Let us consider a vector field $b\cdot \nabla_y$ in $\R^3$ satisfying \eqref{equ:EquLipDiv}, \eqref{equ:EquGrowth} with $S$-periodic characteristic flow $Y=Y(s;y)$, $(s,y)\in \R\times\R^3$. We denote by $M[e]$ the matrix of the linear transformation $v \to e\wedge v, v\in\R^3$, that is $M[e]v = e \wedge v, v\in\R^3$. Then, for any function u such that $u\in \mathrm{ker}(b\cdot\nabla_y)$,
we have the equality
\[
(I_3 -e_s\otimes e_s)\dfrac{\partial Y(s;\cdot)M[e]{^t}\partial Y(s;\cdot)}{|b|}(I_3 -e_s\otimes e_s)(\nabla u)_s = \dfrac{M[e_s]}{|b_s|} (\nabla u)_s,\,\, e=\dfrac{b}{|b|}.
\]
\end{lemma}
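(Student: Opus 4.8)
The plan is to reduce the whole identity to one piece of linear algebra — how the antisymmetric matrix $M[e]$ transforms under conjugation by $A:=\partial Y(s;\cdot)$ — and then to use the hypothesis $u\in\mathrm{ker}(b\cdot\nabla_y)$ only through the orthogonality $(\nabla u)_s\perp e_s$, which makes the inner projection act trivially and lets the outer projection discard the unwanted term. I would begin with two elementary facts about $A=\partial Y(s;\cdot)$. Since $\mathrm{div}_y b=0$, Liouville's formula gives $\det A\equiv 1$. Since the flow transports its own generator, $\partial Y(s;y)b(y)=b(Y(s;y))=b_s$ (both $Ab$ and $b_s$ solve $\dot\xi=(\partial_y b)_s\xi$ with value $b$ at $s=0$), and hence $Ae=\tfrac{|b_s|}{|b|}e_s$ with $e=b/|b|$, $e_s=b_s/|b_s|$.

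The algebraic core is the identity $A\,M[v]\,{^t}A=M[(\det A)\,{^t}A^{-1}v]$, valid for any invertible $A$ and $v\in\R^3$. To prove it I would observe that the left-hand side is antisymmetric, hence equals $M[w]$ for a unique $w$, and then compare $(M[w]x)\cdot y=w\cdot(x\wedge y)$ with $(A\,M[v]\,{^t}A\,x)\cdot y=v\cdot\big(({^t}Ax)\wedge({^t}Ay)\big)$; the cofactor identity $({^t}Ax)\wedge({^t}Ay)=(\det A)\,A^{-1}(x\wedge y)$ then yields $w=(\det A)\,{^t}A^{-1}v$. With $\det A=1$ this reads $A\,M[e]\,{^t}A=M[{^t}A^{-1}e]$, so the middle factor on the left of the claimed identity is simply $\tfrac1{|b|}$ times the cross product with the vector ${^t}A^{-1}e$.

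It remains to analyse ${^t}A^{-1}e$. From $({^t}A^{-1}e)\cdot(Ae)=e\cdot e=1$ and $Ae=\tfrac{|b_s|}{|b|}e_s$ one gets $({^t}A^{-1}e)\cdot e_s=\tfrac{|b|}{|b_s|}$, so ${^t}A^{-1}e=\tfrac{|b|}{|b_s|}e_s+w$ with $w\perp e_s$. Now the hypothesis $u\in\mathrm{ker}(b\cdot\nabla_y)$ gives $(b\cdot\nabla u)(Y(s;y))=0$, i.e. $(\nabla u)_s\cdot e_s=0$; hence $(I_3-e_s\otimes e_s)(\nabla u)_s=(\nabla u)_s$ and
\[
(I_3-e_s\otimes e_s)\,\tfrac1{|b|}\,M[{^t}A^{-1}e](\nabla u)_s=(I_3-e_s\otimes e_s)\Big[\tfrac1{|b_s|}\,e_s\wedge(\nabla u)_s+\tfrac1{|b|}\,w\wedge(\nabla u)_s\Big].
\]
Since $w$ and $(\nabla u)_s$ both lie in the $2$-plane $e_s^{\perp}$, their wedge product is parallel to $e_s$ — this is precisely Lemma \ref{VectProd} with $(e,\xi,\eta)\to(e_s,w,(\nabla u)_s)$, which gives $w\wedge(\nabla u)_s=\big((M[e_s]w)\cdot(\nabla u)_s\big)e_s$ — so it is annihilated by $I_3-e_s\otimes e_s$, whereas $e_s\wedge(\nabla u)_s\perp e_s$ is left unchanged. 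What survives is $\tfrac1{|b_s|}\,e_s\wedge(\nabla u)_s=\tfrac{M[e_s]}{|b_s|}(\nabla u)_s$, which is the right-hand side.

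I do not anticipate a genuine obstacle; the only delicate point is bookkeeping in the conjugation formula $A\,M[v]\,{^t}A=M[(\det A)\,{^t}A^{-1}v]$ (equivalently, the fact that an axial vector transforms under $A$ as a $2$-form density), used together with $\det A=1$, and making sure the kernel hypothesis enters exclusively through $(\nabla u)_s\perp e_s$ so that neither projection introduces a spurious contribution.
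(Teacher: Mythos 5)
Your proof is correct, but it follows a genuinely different route from the paper's. The paper proves Lemma \ref{Identity} by taking two invariant functions $\alpha,\beta$, observing that $\nabla\alpha\wedge\nabla\beta=\lambda b$ is in involution with $b\cdot\nabla_y$, rewriting both sides of the transported identity $\partial Y(s;\cdot)(\nabla\alpha\wedge\nabla\beta)=(\nabla\alpha)_s\wedge(\nabla\beta)_s$ via Lemma \ref{VectProd}, and concluding that the difference $\bigl(\partial Y M[e]{^t}\partial Y/|b|-M[e_s]/|b_s|\bigr)(\nabla\alpha)_s$ is orthogonal to $(\nabla\beta)_s$ for every invariant $\beta$, hence lies in $\R e_s$ and is killed by the outer projector; this step implicitly uses that the gradients of invariants span $e_s^{\perp}$, i.e.\ the existence of two functionally independent invariants of the periodic flow. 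You instead isolate the purely linear-algebraic content: Liouville's formula ($\det\partial Y=1$ from $\mathrm{div}_y b=0$), the transport of the generator ($\partial Y\,b=b_s$, so $\partial Y\,e=\tfrac{|b_s|}{|b|}e_s$), and the adjugate conjugation identity $A M[v]{^t}A=M[(\det A)\,{^t}A^{-1}v]$, after which the decomposition ${^t}A^{-1}e=\tfrac{|b|}{|b_s|}e_s+w$ with $w\perp e_s$ and the single use of the hypothesis $(\nabla u)_s\perp e_s$ finish the computation ($w\wedge(\nabla u)_s\parallel e_s$ is discarded by the projector). Your argument is more self-contained — it needs no auxiliary invariant $\beta$ and makes explicit exactly which properties of the Jacobian matrix are used — while the paper's version has the advantage of reusing verbatim the same manipulations that reappear in the proof of Proposition \ref{AngField}. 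Both establish the identity; no gap in yours.
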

\begin{proof}
For any invariant functions $\alpha = \alpha(y), \beta = \beta(y)$ with respect to the flow $Y$ we have $\nabla_y \alpha \wedge \nabla_y \beta \in \R e$ and $\mathrm{div}_y (\nabla_y \alpha \wedge \nabla_y \beta) = 0$. Therefore there is $\lambda \in \mathrm{ker}(b\cdot \nabla_y)$ such that $\nabla_y \alpha \wedge \nabla_y \beta = \lambda b$, saying that the vector field $\nabla_y \alpha \wedge \nabla_y \beta$ is in involution with respect to $b\cdot\nabla_y$. We have
\[
\partial Y(s;\cdot) \nabla \alpha \wedge \nabla \beta = (\nabla \alpha)_s \wedge (\nabla \beta)_s.
\]
Therefore, by Lemma \ref{VectProd} we obtain
\[
\partial Y(s;\cdot)(e\otimes M[e]\nabla \alpha - M[e]\nabla \alpha\otimes e)\nabla \beta = (e_s\otimes M[e_s](\nabla \alpha)_s - M[e_s](\nabla \alpha)_s\otimes e_s)(\nabla \beta)_s ,
\]
which reduce, thanks to the equalities $e\cdot \nabla\beta =0$, $e_s\cdot (\nabla\beta)_s =0$, to
\[
\partial Y(s;\cdot)(e\otimes M[e]\nabla\alpha)\nabla\beta = (e_s\otimes M[e_s](\nabla \alpha)_s)(\nabla\beta)_s .
\]
As $\alpha$ and $\beta$ are left invariant by the flow $Y$, we have 
\[
\partial Y(s;\cdot)(e\otimes M[e]{^t}\partial Y(s;\cdot)(\nabla\alpha)_s){^t}\partial Y(s;\cdot)(\nabla\beta)_s = (e_s\otimes M[e_s](\nabla \alpha)_s)(\nabla\beta)_s .
\]
Therefore we obtain
\[
\left( e_s \otimes \dfrac{\partial Y(s;\cdot)M[e]{^t}\partial Y(s;\cdot)}{|b|}(\nabla\alpha )_s  \right)(\nabla\beta)_s = \left( e_s \otimes \dfrac{M[e_s](\nabla\alpha)_s}{|b_s|} \right)(\nabla\beta)_s ,
\]
or equivalently
\[
\left(\dfrac{\partial Y(s;\cdot)M[e]{^t}\partial Y(s;\cdot)}{|b|} -  \dfrac{M[e_s]}{|b_s|} \right) (\nabla \alpha)_s \in \R e_s .
\]
Finally, we have 
\[
(I_3 - e_s \otimes e_s) \left(\dfrac{\partial Y(s;\cdot)M[e]{^t}\partial Y(s;\cdot)}{|b|} -  \dfrac{M[e_s]}{|b_s|} \right) (\nabla \alpha)_s  =0 ,
\]
for any invariant function $\alpha$, and our conclusions follows.
\end{proof}
\begin{lemma}
\label{ZeroAveVectField}
Let us consider a vector field $b\cdot\nabla_y$ in $\R^3$ satisfying \eqref{equ:EquLipDiv}, \eqref{equ:EquGrowth} with $S$-periodic characteristic flow $Y=Y(s;y),(s,y)\in\R\times\R^3$, which possesses angular vector field $\nu$ in some invariant open subset $D \subset \R^3$. A vector field $c\cdot\nabla_y$ has zero average in $D$ iff $\left< c\cdot\nu \right> =0$ in $D$ and $\left<  c\cdot \nabla_y u\right> =0$ in $D$ for any function $u$ such that $\mathds{1}_D u \in\mathrm{ker}(b\cdot\nabla_y)$.
\end{lemma}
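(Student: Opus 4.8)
\emph{Plan of proof.} Write $d=\left<c\right>$. The argument will rest on two elementary identities, valid at every point of $D$ and proved exactly as \eqref{equ:AveConsFlowY}: for any $u$ with $\mathds{1}_D u\in\mathrm{ker}(b\cdot\nabla_y)$,
\[
\left< c\cdot\nabla_y u\right> = d\cdot\nabla_y u,\qquad \left< c\cdot\nu\right> = d\cdot\nu .
\]
The first follows from ${}^t\partial Y(s;\cdot)\,(\nabla_y u)_s=\nabla_y u$ (differentiate $u(Y(s;\cdot))=u$ with respect to $y$), the second from the defining relation ${}^t\partial Y(s;\cdot)\,\nu_s=\nu$ of an angular vector field; both computations take place along orbits, which remain in $D$ by invariance. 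Granting them, the implication that $d\equiv0$ in $D$ forces $\left<c\cdot\nu\right>=\left<c\cdot\nabla_y u\right>=0$ in $D$ is immediate, and this is one half of the equivalence.

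For the converse I will argue pointwise. Fix $y_0\in D$. By the two identities the hypotheses say precisely that $d(y_0)$ is orthogonal to $\nu(y_0)$ and to $(\nabla_y u)(y_0)$ for every admissible $u$, so it suffices to prove that these vectors span $\R^3$. Since $b(y_0)\cdot\nu(y_0)=C\neq0$ we have $b(y_0)\neq0$ and $\nu(y_0)\notin b(y_0)^{\perp}$, whereas $b\cdot\nabla_y u=0$ forces $(\nabla_y u)(y_0)\in b(y_0)^{\perp}$. Hence the span is all of $\R^3$ as soon as the gradients at $y_0$ of admissible first integrals fill the plane $b(y_0)^{\perp}$.

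Establishing this last point is the main obstacle, since the competitors $u$ must be defined on all of $D$. I will use that the orbit $\Gamma$ of $y_0$ is a closed curve, of the common period $S$, passing through a point where $b\neq0$. A flow-box / tubular-neighbourhood construction then yields an invariant open set $\mathcal{U}=\{Y(s;\sigma):s\in[0,S),\ \sigma\in\Sigma\}\subset D$, with $\Sigma$ a small transverse disc, together with coordinates $(s,\sigma_1,\sigma_2)$ on $\mathcal{U}$ in which $b=\partial_s$; the functions $\sigma_1,\sigma_2$ are first integrals on $\mathcal{U}$ whose differentials at $y_0$ already span $b(y_0)^{\perp}$. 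Multiplying them by a cut-off that depends only on $(\sigma_1,\sigma_2)$, equals $1$ near $y_0$ and is compactly supported in $\Sigma$, and extending by $0$ outside $\mathcal{U}$, produces functions $u$ that are constant along every orbit contained in $D$ (because $\mathcal{U}$ is a union of full orbits), hence satisfy $\mathds{1}_D u\in\mathrm{ker}(b\cdot\nabla_y)$, and that have the prescribed gradient at $y_0$. Therefore $d(y_0)$ is orthogonal to a spanning family and vanishes; since $y_0\in D$ is arbitrary, $\left<c\right>=0$ in $D$. The only genuinely technical ingredients are the construction of $\mathcal{U}$ — which uses that the flow is periodic with the \emph{single} period $S$, so that the Poincar\'e return map on $\Sigma$ is the identity — and, should one keep only the $W^{1,\infty}$ regularity of \eqref{equ:EquLipDiv} rather than the smooth setting of the intended applications, a routine regularisation of $b$.
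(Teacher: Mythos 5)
Your proof is correct and follows essentially the same route as the paper's: both arguments reduce the statement to the identities $\left<c\cdot\nabla_y u\right>=\left<c\right>\cdot\nabla_y u$ and $\left<c\cdot\nu\right>=\left<c\right>\cdot\nu$, and in the converse direction conclude via $b\cdot\nu=C\neq0$ once $\left<c\right>$ is known to be parallel to $b$. The only difference is that the paper simply asserts that orthogonality to $\nabla_y u$ for all first integrals $u$ forces $\left<c\right>=\lambda b$, whereas you justify this with the flow-box construction of local invariants whose gradients span $b(y_0)^{\perp}$ — a correct filling-in of the one step the paper leaves implicit.
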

\begin{proof}
By formula \eqref{equ:AveConsFlowY} we know that for any function $u$ which is left invariant by $Y$ in $D$, we have $\left<c\cdot\nabla_y u\right>= \left< c\right>\cdot\nabla_y u$ in $D$. Similarly, for any $y\in D$ we write
\begin{align*}
\left< c\cdot\nu\right>(y) &= \dfrac{1}{S}\int_0^S{c(Y(s;y))\cdot \nu(Y(s;y))}\mathrm{d}s\\
&= \dfrac{1}{S} \int_0^S{\partial Y(-s;Y(s;y)) c(Y(s;y)) \cdot {^t} \partial_y Y(s;y)\nu(Y(s;y))}\mathrm{d}s \\
&= \dfrac{1}{S} \int_0^S{\partial Y(-s;Y(s;y)) c(Y(s;y))}\mathrm{d}s\cdot \nu(y) \\
&= \left< c\right>(y) \cdot \nu(y).
\end{align*}
Clearly, if $\left< c\right> =0$ in $D$, then $\mathds{1}_D \left< c\cdot \nabla_y u \right> =0$ for any function $u$ such that $\mathds{1}_D u\in \mathrm{ker}(b\cdot\nabla_y)$ and $\mathds{1}_D \left< c \cdot \nu\right> =0$. Conversely, if $\mathds{1}_D \left< c \cdot \nabla_y u\right> =0$ for any function $u$ such that $\mathds{1}_D u \in\mathrm{ker}(b\cdot\nabla_y)$ and $\mathds{1}_D \left< c \cdot \nu\right> =0$, then $\mathds{1}_D \left< c\right>\cdot\nabla_y u =0$, $\mathds{1}_D \left< c\right>\cdot \nu =0$. We deduce that there is a function $\lambda = \lambda(y)$ in $D$ such that
$
\left< c\right>(y)= \lambda(y)b(y),\,\, y\in D.
$
Taking the scalar product by $\nu(y), y\in D$, we obtain
\[
0 = \left< c\right>(y)\cdot \nu(y) = \lambda(y) b(y)\cdot \nu(y) = \lambda(y) C,\,\, y\in D.
\]
Since $C\in \R^\star$, we deduce that $\lambda$ vanishes in $D$ and $\mathds{1}_D\left< c\right> =0$.
\end{proof}

We are ready to prove the commutation formula \eqref{equ:AngField}.
\begin{proof}(of Proposition \ref{AngField})
All the computations are performed in $D$.\\
 We assume for the moment that $\alpha\in \mathrm{ker}(b\cdot\nabla_y)$ and we prove that $\nabla \alpha \wedge \eta$ is in involution with respect to $b\cdot\nabla_y$. We have by Lemma \ref{VectProd} and Lemma \ref{Identity}
\begin{align*}
\partial Y(s;\cdot)&(\nabla \alpha \wedge \eta) = \partial Y(s;\cdot)(e\otimes M[e]\nabla\alpha - M[e]\nabla\alpha\otimes e)\eta \\
&= \left[ b_s \otimes  \dfrac{\partial Y(s;\cdot)M[e]{^t}\partial Y(s;\cdot)}{|b|}(\nabla\alpha )_s - \dfrac{\partial Y(s;\cdot)M[e]{^t}\partial Y(s;\cdot)}{|b|}(\nabla\alpha )_s \otimes b_s \right]\eta_s \\
&= \left[ b_s \otimes (I_3 - e_s\otimes e_s)\dfrac{\partial Y(s;\cdot)M[e]{^t}\partial Y(s;\cdot)}{|b|}(I_3 - e_s\otimes e_s)(\nabla\alpha )_s  \right.\\
&\left.  \quad- (I_3 - e_s\otimes e_s)\dfrac{\partial Y(s;\cdot)M[e]{^t}\partial Y(s;\cdot)}{|b|}(I_3 - e_s\otimes e_s) (\nabla\alpha )_s \otimes b_s \right] \eta_s \\
&= (e_s \otimes M[e_s](\nabla \alpha)_s - M[e_s](\nabla \alpha)_s\otimes e_s)\eta_s \\
&= (\nabla\alpha)_s \wedge \eta_s.
\end{align*}
Assume now that $\left< \alpha \right> =0$ and we prove that $\left<\nabla\alpha \wedge \eta \right>=0$. 
If $\eta = \nabla \beta$ for some function $\beta$, which is left invariant by $Y$ in $D$ we have
\begin{align*}
&\partial Y (-s;Y(s;\cdot))\eta_s \wedge (\nabla\alpha)_s \\
&= \partial Y (-s;Y(s;\cdot))(e_s\otimes M[e_s](\nabla\beta)_s - M[e_s](\nabla\beta)_s\otimes e_s)(\nabla\alpha)_s\\
&= \left( b \otimes  \dfrac{\partial Y(-s;Y(s;\cdot))M[e_s]{^t}\partial Y(-s;Y(s;\cdot))}{|b_s|}\nabla\beta \right. \\
&\left. \quad - \dfrac{\partial Y(-s;Y(s;\cdot))M[e_s]{^t}\partial Y(-s;Y(s;\cdot))}{|b_s|}\nabla\beta \otimes b \right)\nabla(\alpha_s) \\
&= \left[ b \otimes (I_3 - e\otimes e)\dfrac{\partial Y(-s;Y(s;\cdot))M[e_s]{^t}\partial Y(-s;Y(s;\cdot))}{|b_s|}(I_3 - e\otimes e)\nabla\beta  \right.\\
&\left.  \quad - (I_3 - e\otimes e)\dfrac{\partial Y(-s;Y(s;\cdot))M[e_s]{^t}\partial Y(-s;Y(s;\cdot))}{|b_s|}(I_3 - e\otimes e) \nabla\beta \otimes b \right] \nabla\alpha_s  \\
&= \left( b\otimes \dfrac{M[e]}{|b|}\nabla\beta - \dfrac{M[e]}{|b|} \nabla\beta\otimes b\right)\nabla\alpha_s\\
&= (e \otimes M[e]\nabla\beta - M[e]\nabla\beta \otimes e)\nabla\alpha_s \\
&= \nabla\beta \wedge \nabla\alpha_s.
\end{align*}
We obtain
\[
\left< \nabla\beta \wedge \nabla\alpha \right> = \dfrac{1}{S}\int_0^S{\nabla\beta \wedge \nabla\alpha _s}\mathrm{d}s = \nabla\beta \wedge \nabla \left< \alpha \right> =0.
\]
If $\eta$ is an angular vector field $\nu$ in $D$, we appeal to Lemma \ref{ZeroAveVectField}. Obviously, we have $\left< (\nabla\alpha\wedge \nu)\cdot \nu\right> =0$ and for any function $u$ such that $\mathds{1}_D u\in\mathrm{ker}(b\cdot\nabla_y)$, we can write since $(\nabla_y u\wedge \nu)\cdot \nabla_y$ is in involution with $b\cdot\nabla_y$ in $D$ cf. the first part of this proof, and thanks to \eqref{equ:AveInvoFieldb}
\[
\left< (\nabla_y \alpha\wedge \nu)\cdot \nabla_y u\right> = - \left< (\nabla_y u \wedge \nu )\cdot \nabla_y \alpha\right> = - (\nabla_y u \wedge \nu )\cdot \nabla_y \left<\alpha\right> =0.
\]
Therefore, we deduce that
\[
\left< \nabla\alpha \wedge \nu \right> =0.
\]
Finally, for any function $\alpha$ we have
\begin{align*}
\left< \nabla\alpha \wedge \eta \right> &= \left< \nabla \left<\alpha\right> \wedge \eta \right> + \left< \nabla(\alpha -\left<\alpha\right>)  \wedge \eta \right> \\&= \dfrac{1}{S}\int_0^S{\partial Y(-s;Y(s;\cdot))(\nabla \left<\alpha\right>)_s \wedge \eta_s}\mathrm{d}s\\
&= \dfrac{1}{S} \int_0^S{\partial Y(-s;Y(s;\cdot))\partial Y(s;\cdot)(\nabla\left<\alpha\right> \wedge \eta)}\mathrm{d}s\\
&= \dfrac{1}{S} \int_0^S{\nabla\left<\alpha\right> \wedge \eta}\mathrm{d}s\\& = \nabla\left<\alpha\right> \wedge \eta \mathrm{d}s.
\end{align*}
\end{proof}

\subsection{Cylindrical case}
\label{Cylin}
In this subsection, we apply the previous results to a simplified framework, focusing a magnetic field whose magnetic lines wind on cylindrical surfaces.

We consider the magnetic field $ Be = B_0 \left( \frac{x_2}{R_0},-\frac{x_1}{R_0},1 \right)$, $x=(x_1,x_2,x_3)=(\bar{x},x_3)\in \R^3$, where $B_0,R_0$ are some reference values for the magnetic field and length. The characteristic flow is given by
\[
(\bar{X}(s;\bar{x}),X_3(s;x_3)) = \left( \calR\left(-s \dfrac{B_0}{R_0}\right)\bar{x},x_3 + sB_0\right),\,\,(s,\bar{x},x_3)\in \R\times\R^3 ,
\]
where 
\[
\calR(\theta) = \begin{pmatrix}
\cos \theta & -\sin \theta\\
\sin \theta &  \cos \theta
\end{pmatrix},\,\,\theta\in\R.
\]
We have two angular vector fields
\[
\nu_\theta = \dfrac{(x_2,-x_1,0)}{x_1 ^2 + x_2 ^2},\, \bar{x}\neq 0,\quad \nu_{\parallel} = (0,0,1).
\]
All the functions are supposed periodic with respect to $x_3$. Taking $S=2\pi R_0/B_0$, we define the average operator for a function $u$ by
\[
\left< u \right>(x) = \dfrac{1}{S}\int_0^S{u(\bar{X}(s;\bar{x}),X_3(s;x_3))}\mathrm{d}s = \dfrac{1}{S}\int_0^S{u\left(\calR\left(-s \dfrac{2\pi}{S}\right)\bar{x}, x_3 + s\dfrac{2\pi}{S}R_0 \right)}\mathrm{d}s,
\]
and for a vector field $c\cdot\nabla_x = \bar{c}\cdot\nabla_{\bar{x}} + c_3 \partial x_3$ by
\begin{align*}
\left< c\right>(x) &= \dfrac{1}{S}\int_0^S{\begin{pmatrix}
   \begin{array}{cr}
\calR(s\frac{2\pi}{S}) & \begin{matrix} 0\\ 0 \end{matrix} \\
  \begin{matrix} 0 && 0 \end{matrix} & 1
   \end{array}
\end{pmatrix}c\left(\calR\left(-s \dfrac{2\pi}{S}\right)\bar{x}, x_3 + s\dfrac{2\pi}{S}R_0 \right)}\mathrm{d}s\\
&= \dfrac{1}{S}\int_{0}^{S}{\begin{pmatrix} \calR(s\frac{2\pi}{S})\bar{c}\left(\calR\left(-s \frac{2\pi}{S}\right)\bar{x}, x_3 + s\frac{2\pi}{S}R_0 \right)\\
c_3\left(\calR\left(-s \frac{2\pi}{S}\right)\bar{x}, x_3 + s\frac{2\pi}{S}R_0 \right)
\end{pmatrix}}\mathrm{d}s.
\end{align*}
We use the following decomposition of $Be\cdot\nabla_x$
\[
Be = \dfrac{B_0}{R_0}|\bar{x}|^2 \nu_{\theta} + B_0 \nu_{\parallel},\,\, |\bar{x}|>0.
\]
Thanks to Proposition \ref{AngField}, we compute the term $\left< \rot_x\left( \frac{ae}{\omega_c}\right) \right>$ appearing in the limit model \eqref{equ:AveRot}. Observe that
\[
\rot_x\left( \dfrac{ae}{\omega_c}\right) = \rot_x\left[ \dfrac{a}{B\omega_c}\left(\dfrac{B_0}{R_0}|\bar{x}|^2 \nu_{\theta} + B_0 \nu_{\parallel} \right) \right] = \nabla_x \left( \dfrac{aB_0|\bar{x}|^2}{B\omega_cR_0}\right)\wedge \nu_\theta + \nabla_x \left( \dfrac{aB_0}{B\omega_c}\right)\wedge \nu_\parallel ,
\]
and therefore
\begin{align*}
\left< \rot_x\left( \dfrac{ae}{\omega_c}\right) \right> &= \nabla_x \left<\dfrac{aB_0 |\bar{x}|^2}{B\omega_c R_0} \right> \wedge \nu_\theta + \nabla_x \left<\dfrac{aB_0 }{B\omega_c } \right>\wedge \nu_\parallel \\
&= \nabla_x \left( \dfrac{aB_0 |\bar{x}|^2}{B\omega_c R_0} \right)\wedge \nu_\theta + \nabla_x \left(\dfrac{aB_0 }{B\omega_c } \right)\wedge \nu_\parallel  =  \rot_x\left( \dfrac{ae}{\omega_c}\right),
\end{align*}
since the functions $a$, $B\omega_c$ and $|\bar{x}|^2$ belong to $\mathrm{ker}(Be\cdot\nabla_x)$. We obtain
\[
\left< \rot_x\left( \dfrac{ae}{\omega_c}\right) \right> \cdot \nabla_x k[F(a)] = \rot_x\left( \dfrac{ae}{\omega_c}\right)\cdot \nabla_x k[F(a)] = \Divx\left(\dfrac{ae}{\omega_c}\wedge \nabla_x k[F(a)] \right).
\]
In that case, the vector field $\rot_x\left(\frac{ae}{\omega_c} \right)$ is in involution with $Be\cdot\nabla_x$, and \eqref{equ:AveRot} becomes
\[
\partial_t a + \Divx\left(\dfrac{ae}{\omega_c}\wedge \nabla_x k[F(a)] \right) =0,\quad n =F(a).
\]
In this case, we work in the $2\pi R_0$-periodic domain with respect to $x_3$, $\R^2 \times \mathbb{T}^1$, where $\T^1 = \R/(2\pi R_0 \Z)$. The potential $\Phi$ solves the Poisson equation
\[
-\epsilon_0 \Delta_x \Phi = qn,\,\, x\in \R^2\times\T^1,
\]
with the boundary condition
\[
\lim_{|\bar{x}|\to\infty}\Phi(\bar{x},x_3)=0,\,\, x_3\in \T^1.
\]
The Jacobian matrix of the flow $X(s,x) = (\bar{X}(s;\bar{x}),X_3(s;x_3))$ is orthogonal
\[
\partial_x X(s;x) = \begin{pmatrix}
   \begin{array}{cr}
\calR(-s\frac{B_0}{R_0}) & \begin{matrix} 0\\ 0 \end{matrix} \\
  \begin{matrix} 0 &&´ 0 \end{matrix} & 1
   \end{array}
\end{pmatrix},
\]
which implies that the Laplace operator commutes with the translations along the flow, meaning that
\[
\Delta_x u_s = (\Delta_x u)_s ,
\]
for any smooth function $u$. If $\Phi[n]$ is the potential corresponding to the $2\pi R_0$-periodic concentration $n$ with respect to $x_3$, then
\[
-\epsilon_0 \Delta_x(\Phi[n])_s = -\epsilon_0 (\Delta_x \Phi[n])_s = q n_s ,
\]
for any $x_3$ we have
\[
\lim_{|\bar{x}|\to +\infty} \Phi[n](X(s;x)) = \lim_{|\bar{x}|\to +\infty}\Phi[n](\bar{X}(s;\bar{x}),X_3(s;x_3)) =0,\,\, \mathrm{because}\, |\bar{X}(s,\bar{x})| = |\bar{x}|,
\]
and $(\Phi[n])_s$ is $2\pi R_0$-periodic with respect to $x_3$
\begin{align*}
\Phi[n](\bar{X}(s;\bar{x}),X_3(s;x_3 +2\pi R_0 )) &= \Phi[n](\bar{X}(s;\bar{x}),X_3(s;x_3) +2\pi R_0 )\\
&= \Phi[n](\bar{X}(s;\bar{x}),X_3(s;x_3 )) = (\Phi[n])_s (x).
\end{align*}
Therefore we have $(\Phi[n])_s = \Phi[n_s]$. In particular, if $n\in \mathrm{ker}(Be\cdot\nabla_x)$ then $\Phi[n]\in \mathrm{ker}(Be\cdot\nabla_x)$. By construction $n=F(a)$ is the unique concentration such that $\left< n\right> =a$, $Be\cdot\nabla_x k[n] =0$. Clearly we have $\left< a\right> =a$ and $k[a] = \sigma(1 + \ln a) + \frac{q}{m}\Phi[a]\in \mathrm{ker}(Be\cdot\nabla_x)$ and thus $n=F(a)=a$ for any $a \in \mathrm{ker}(Be\cdot\nabla_x)$. The constraint in \eqref{equ:constraint} is automatically satisfied. In that case, our limit model simply writes
\begin{equation}
\label{equ:EquCyl1}
\partial_t n + \Divx\left( \dfrac{n e}{\omega_c}\wedge \nabla_x k[n] \right) =0,\,\, (t,x)\in \R_+ \times \R^2\times \T^1.
\end{equation}

Since we know that at any time $t$, $n(t)$ belongs to $\mathrm{ker}(Be\cdot\nabla_x)$, we can reduce the above model to a two dimensional problem. We appeal to the invariants of the flow $X$
\[
\calR\left( \dfrac{X_3(s;x_3)}{R_0} \right)\bar{X}(s;\bar{x}) = \calR \left( \dfrac{x_3 +  s B_0}{R_0}\right)\calR \left( -s\dfrac{B_0}{R_0}\right)\bar{x} = \calR \left(\dfrac{x_3}{R_0} \right)\bar{x}.
\]
We introduce the new unknown function $N=N(t,\bar{y} =(y_1,y_2))$ such that
\[
n(t,x) =  N(t,\bar{y} = \calR(x_3 / R_0 )\bar{x}),
\]
and we are looking for the model satisfied by $N=N(t,\bar{y})$.
\begin{lemma}
\label{Laplace}
Let us consider a smooth function $U = U(\bar{y}),\bar{y}\in\R^2$, and $u(x) =U(\calR (x_3/ R_0)\bar{x})$, $x\in \R^2 \times \T^1$. We have
\[
\Delta_x u = \left[ \mathrm{div}_{\bar{y}}\left( I_2 + \dfrac{^\perp \bar{y} \otimes {^\perp}\bar{y}}{R_0 ^2} \right)\nabla_{\bar{y}}U \right](\bar{y}= \calR(x_3 /R_0)\bar{x}).
\]
\end{lemma}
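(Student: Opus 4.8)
The plan is to prove the identity by a direct computation, checking it pointwise in $x = (\bar x, x_3)$; both sides are smooth functions of $x$, so this suffices. Throughout write $\bar y = \calR(x_3/R_0)\bar x$ for the point at which $U$ and its derivatives are to be evaluated. Three elementary facts will carry the argument: $\calR(\theta)$ is orthogonal, so $\sum_{k=1}^2\calR_{ik}(\theta)\calR_{jk}(\theta) = \delta_{ij}$; its derivative satisfies $\calR'(\theta) = J\calR(\theta) = \calR(\theta)J$, where $J$ is the matrix of the rotation by $\pi/2$, i.e. $Jv = {^\perp}v$ for $v\in\R^2$; and $J^2 = -I_2$.

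First I compute the gradient of $u$. For $k\in\{1,2\}$ the rotation matrix does not depend on $\bar x$, so $\partial_{x_k}\bar y_i = \calR_{ik}(x_3/R_0)$ and hence $\partial_{x_k}u = \calR_{ik}(x_3/R_0)(\partial_{\bar y_i}U)(\bar y)$. For the third variable both the matrix and its argument $\bar x$ contribute:
\[
\partial_{x_3}\bar y = \tfrac{1}{R_0}\calR'(x_3/R_0)\bar x = \tfrac{1}{R_0}J\calR(x_3/R_0)\bar x = \tfrac{1}{R_0}\,{^\perp}\bar y,
\]
so $\partial_{x_3}u = \tfrac{1}{R_0}\,{^\perp}\bar y\cdot(\nabla_{\bar y}U)(\bar y)$. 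Differentiating $\partial_{x_k}u$ once more in $x_l$ with $l\in\{1,2\}$ (again $\calR$ is constant in $\bar x$) gives $\partial_{x_l}\partial_{x_k}u = \calR_{ik}\calR_{jl}(\partial_{\bar y_i}\partial_{\bar y_j}U)(\bar y)$; setting $l = k$ and summing over $k\in\{1,2\}$, orthogonality of $\calR$ collapses this to $\partial_{x_1}^2 u + \partial_{x_2}^2 u = (\Delta_{\bar y}U)(\bar y)$.

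For the vertical part I differentiate $\partial_{x_3}u = \tfrac{1}{R_0}\,{^\perp}\bar y\cdot(\nabla_{\bar y}U)(\bar y)$ once more in $x_3$, now with both factors depending on $x_3$. Using $\partial_{x_3}({^\perp}\bar y) = J\,\partial_{x_3}\bar y = \tfrac{1}{R_0}J^2\bar y = -\tfrac{1}{R_0}\bar y$ and the chain rule on $(\nabla_{\bar y}U)(\bar y)$ one obtains
\[
\partial_{x_3}^2 u = -\frac{1}{R_0^2}\,\bar y\cdot(\nabla_{\bar y}U)(\bar y) + \frac{1}{R_0^2}\,\bigl({^\perp}\bar y\otimes{^\perp}\bar y\bigr):(\nabla_{\bar y}^2 U)(\bar y).
\]
It then remains to expand the right-hand side of the claimed formula: with $A(\bar y) = I_2 + R_0^{-2}\,{^\perp}\bar y\otimes{^\perp}\bar y$ and $g = {^\perp}\bar y\cdot\nabla_{\bar y}U$, one has $A\nabla_{\bar y}U = \nabla_{\bar y}U + R_0^{-2}\,g\,{^\perp}\bar y$, and since $\mathrm{div}_{\bar y}({^\perp}\bar y) = 0$ this yields $\mathrm{div}_{\bar y}(A\nabla_{\bar y}U) = \Delta_{\bar y}U + R_0^{-2}\,{^\perp}\bar y\cdot\nabla_{\bar y}g$; a direct expansion of ${^\perp}\bar y\cdot\nabla_{\bar y}g$ equals $({^\perp}\bar y\otimes{^\perp}\bar y):\nabla_{\bar y}^2 U - \bar y\cdot\nabla_{\bar y}U$. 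Summing the three second derivatives computed above and comparing term by term with this expression (evaluated at $\bar y = \calR(x_3/R_0)\bar x$) proves the lemma. The only step needing care is $\partial_{x_3}^2 u$: one must track the $x_3$-dependence both of the rotation — which produces the $-R_0^{-2}\,\bar y\cdot\nabla_{\bar y}U$ term via $J^2 = -I_2$ — and of the evaluation point $\bar y$; everything else is routine bookkeeping.
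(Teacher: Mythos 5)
Your proof is correct, but it takes a genuinely different route from the paper's. The paper argues in a weak/variational way: it tests $\Delta_x u$ against functions $\psi(x)=\Psi(\calR(x_3/R_0)\bar x)$ that are constant along the magnetic flow, integrates by parts using the $x_3$-periodicity, identifies the matrix product $\frac{\partial\bar y}{\partial x}\,\frac{^t\partial\bar y}{\partial x}=I_2+R_0^{-2}\,{}^\perp\bar y\otimes{}^\perp\bar y$ of the Jacobian of the invariant map $x\mapsto\calR(x_3/R_0)\bar x$, and then closes the argument by observing that the difference of the two sides of the identity is simultaneously orthogonal to and a member of $\mathrm{ker}(Be\cdot\nabla_x)$ (the latter because the Laplacian commutes with the translations along the flow), hence vanishes. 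Your computation is instead a direct pointwise chain-rule calculation: the horizontal part collapses to $\Delta_{\bar y}U$ by orthogonality of $\calR$, and the vertical second derivative produces exactly the anisotropic correction $R_0^{-2}\bigl[({}^\perp\bar y\otimes{}^\perp\bar y):\nabla^2_{\bar y}U-\bar y\cdot\nabla_{\bar y}U\bigr]$, which you correctly match against the expansion of $\mathrm{div}_{\bar y}(A\nabla_{\bar y}U)$ using $\mathrm{div}_{\bar y}({}^\perp\bar y)=0$. What the paper's route buys is reuse of machinery (the Jacobian of the invariant map, the commutation of $\Delta_x$ with the flow) that serves the rest of Section \ref{AngVectFields} — in particular the same template gives Lemma \ref{ConservLamw} — and it works directly at the distributional level; what yours buys is a shorter, fully elementary verification with no need for the ``orthogonal to the kernel and inside the kernel'' step. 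One cosmetic remark: you set $Jv={}^\perp v$, whereas the paper's convention is ${}^\perp\bar x=(x_2,-x_1)=-Jv$; since ${}^\perp$ enters every identity you use quadratically, the sign convention is immaterial and the conclusion is unaffected.
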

\begin{proof}
Consider $\Psi \in C^1 _c(\R^2)$ and $\psi(x) = \Psi(\calR(x_3 /R_0)\bar{x})$, $x\in \R^2 \times \T^1$. Integrating by parts, thanks to the $x_3$-periodicity, one gets
\begin{align*}
\int_{\R^2\times\T^1}{\Delta_x u\, \psi(x)}\mathrm{d}x
&= -\int_{\R^2\times\T^1}{\dfrac{^t \partial \bar{y}}{\partial x}(\nabla_{\bar{y}}U)(\calR(x_3 /R_0)\bar{x})\cdot \dfrac{^t \partial \bar{y}}{\partial x}(\nabla_{\bar{y}}\Psi)(\calR(x_3 /R_0)\bar{x})}\mathrm{d}x\\
&= -\int_{\R^2\times\T^1}{\dfrac{ \partial \bar{y}}{\partial x}\dfrac{^t \partial \bar{y}}{\partial x}(\nabla_{\bar{y}}U)(\calR(x_3 /R_0)\bar{x})\cdot (\nabla_{\bar{y}}\Psi)(\calR(x_3 /R_0)\bar{x})}\mathrm{d}x,
\end{align*}
where $\frac{ \partial \bar{y}}{\partial x}$ is the Jacobian matrix of the apllication $x \to \calR(x_3 /R_0)\bar{x}$
\[
\dfrac{ \partial \bar{y}}{\partial x} = \left(\calR(x_3 /R_0), \calR(x_3 /R_0 + \pi/2)\dfrac{\bar{x}}{R_0}  \right)\in \calM_{2,3}(\R).
\]
The matrix product $\frac{ \partial \bar{y}}{\partial x}\frac{ ^t\partial \bar{y}}{\partial x}$ writes
\[
\frac{ \partial \bar{y}}{\partial x}\frac{ ^t\partial \bar{y}}{\partial x} = I_2 + \calR\left( \dfrac{x_3}{R_0}\right)\dfrac{^\perp \bar{x}}{R_0}\otimes  \calR\left( \dfrac{x_3}{R_0}\right)\dfrac{^\perp \bar{x}}{R_0},\,\, ^\perp\bar{x} = (x_2, -x_1),
\]
and we obtain
\begin{align*}
\int_{\R^2\times\T^1}{\Delta_x u \, \psi(x)} \mathrm{d}x
&=- \int_{\T^1}{\int_{\R^2}{\left( I_2 + \dfrac{^\perp \bar{y} \otimes {^\perp} \bar{y}}{R_0 ^2} \right)\nabla_{\bar{y}}U(\bar{y})\cdot\nabla_{\bar{y}}\Psi(\bar{y})} }\mathrm{d}\bar{y}\,\mathrm{d}x_3\\
&= 2\pi R_0 \int_{\R^2}{\left( \mathrm{div}_{\bar{y}}\left( I_2 + \dfrac{^\perp \bar{y} \otimes {^\perp} \bar{y}}{R_0 ^2} \right)\nabla_{\bar{y}}U \right) \Psi(\bar{y})}\mathrm{d}\bar{y}\\
&= \int_{\R^2\times\T^1}{\left[ \mathrm{div}_{\bar{y}}\left( I_2 + \dfrac{^\perp \bar{y} \otimes {^\perp} \bar{y}}{R_0 ^2} \right)\nabla_{\bar{y}}U \right]\left(\bar{y} = \calR\left( \dfrac{x_3}{R_0}\right)\bar{x} \right)\psi(x)}\mathrm{d}x.
\end{align*}
The previous computation shows that $\Delta_x u - \left[\mathrm{div}_{\bar{y}}\left( I_2 + \frac{^\perp \bar{y} \otimes {^\perp} \bar{y}}{R_0 ^2} \right)\nabla_{\bar{y}}U \right]\left(\bar{y} = \calR\left( \frac{x_3}{R_0}\right)\bar{x} \right) $  is orthogonal on $\mathrm{ker}(Be\cdot\nabla_x)$. But this function belongs to $\mathrm{ker}(Be\cdot\nabla_x)$, because $u$ belongs to $\mathrm{ker}(Be\cdot\nabla_x)$, together with $\Delta_x u$, since the Laplace operator commutes with the flow $X$. Finally, we obtain the desired result in the lemma.
\end{proof}
\begin{lemma}
\label{ConservLamw}
Let us consider two smooth functions $U= U(\bar{y}), W = W(\bar{y}), \bar{y}\in \R^2$ and $u(x) = U(\calR(x_3 /R_0)\bar{x})$, $w(x) = W(\calR(x_3 /R_0)\bar{x})$, $x\in \R^2 \times \T^1$. We have
\[
\Divx\left( \dfrac{u e}{\omega_c}\wedge\nabla_x w \right) =  \left[ \mathrm{div}_{\bar{y}}\left( \dfrac{U}{\omega_0}\calR(\pi/2)\nabla_{\bar{y}}W \right) \right]\left(\bar{y} = \calR\left( \frac{x_3}{R_0}\right)\bar{x} \right),\,\, \omega_0 =\dfrac{qB_0}{m}.
\]
\end{lemma}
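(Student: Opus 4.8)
The plan is to follow the template of the proof of Lemma~\ref{Laplace}. First I would check that both sides of the claimed identity belong to $\mathrm{ker}(Be\cdot\nabla_x)$: the right-hand side is a function of $\bar y$ composed with $x\mapsto\calR(x_3/R_0)\bar x$, hence constant along the flow $X$; for the left-hand side I would write $\Divx\!\big(\tfrac{ue}{\omega_c}\wedge\nabla_x w\big)=\rot_x\!\big(\tfrac{ue}{\omega_c}\big)\cdot\nabla_x w$ (using $\rot_x\nabla_x w=0$), recall from Subsection~\ref{Cylin} that $\rot_x\!\big(\tfrac{ue}{\omega_c}\big)$ is in involution with $Be\cdot\nabla_x$, and use that $Be\cdot\nabla_x(c\cdot\nabla_x w)=[Be,c]\cdot\nabla_x w$ for any field $c$, so that $\rot_x\!\big(\tfrac{ue}{\omega_c}\big)\cdot\nabla_x w\in\mathrm{ker}(Be\cdot\nabla_x)$ because $w$ does. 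It therefore suffices to check the identity weakly, after multiplication by $\psi(x)=\Psi(\calR(x_3/R_0)\bar x)$ with $\Psi\in C^1_c(\R^2)$ and integration over $\R^2\times\T^1$: such $\psi$ exhaust $\mathrm{ker}(Be\cdot\nabla_x)$, and the difference of the two sides, being at once in that kernel and orthogonal to it, must vanish.

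The computational core is the left-hand side integral. Integrating by parts (the $x_3$-periodicity removing boundary terms) turns it into $-\int_{\R^2\times\T^1}\det\!\big(\tfrac{ue}{\omega_c},\nabla_x w,\nabla_x\psi\big)\,\md x$. Into this I would substitute the explicit form $\tfrac{ue}{\omega_c}=\tfrac{mu}{q}\tfrac{Be}{B^2}=\tfrac{uR_0}{\omega_0(|\bar x|^2+R_0^2)}\,(x_2,-x_1,R_0)$ (note that $B$ never vanishes), together with the chain-rule identities $\nabla_x w={}^t\tfrac{\partial\bar y}{\partial x}(\nabla_{\bar y}W)(\calR(x_3/R_0)\bar x)$, $\nabla_x\psi={}^t\tfrac{\partial\bar y}{\partial x}(\nabla_{\bar y}\Psi)(\calR(x_3/R_0)\bar x)$, with $\tfrac{\partial\bar y}{\partial x}=\big(\calR(x_3/R_0),\calR(x_3/R_0+\pi/2)\tfrac{\bar x}{R_0}\big)$ the Jacobian already computed in Lemma~\ref{Laplace}. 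Writing $[a,b]:=a_1b_2-a_2b_1$ for $a,b\in\R^2$, I would expand the resulting $3\times3$ determinant along its last row: the two $2\times2$ rotation blocks have determinant $1$, so the ``diagonal'' contribution is $R_0[\nabla_{\bar y}W,\nabla_{\bar y}\Psi]$; the last entries of the pulled-back gradients are $\tfrac1{R_0}[\bar y,\nabla_{\bar y}W]$ and $\tfrac1{R_0}[\bar y,\nabla_{\bar y}\Psi]$ (because $\calR(x_3/R_0)\bar x=\bar y$), and the corresponding $2\times2$ minors equal $\bar y\cdot\nabla_{\bar y}\Psi$ and $\bar y\cdot\nabla_{\bar y}W$. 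The planar Gra\ss mann identity $[a,b]\,c+[b,c]\,a+[c,a]\,b=0$, scalar-multiplied by $\bar y$, then shows that these mixed terms recombine into $\tfrac{|\bar y|^2}{R_0}[\nabla_{\bar y}W,\nabla_{\bar y}\Psi]$. Since $|\bar x|=|\bar y|$, the determinant equals $\tfrac{uR_0}{\omega_0(|\bar x|^2+R_0^2)}\cdot\tfrac{R_0^2+|\bar y|^2}{R_0}[\nabla_{\bar y}W,\nabla_{\bar y}\Psi]=\tfrac{U}{\omega_0}[\nabla_{\bar y}W,\nabla_{\bar y}\Psi]$ evaluated at $\bar y=\calR(x_3/R_0)\bar x$.

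It remains to reconcile this with the weak form of the right-hand side. For each fixed $x_3$ the change of variables $\bar x\mapsto\bar y=\calR(x_3/R_0)\bar x$ is an area-preserving rotation, so integrating first in $\bar x$ and then over $x_3\in\T^1$ reduces both sides to $2\pi R_0$ times an integral over $\R^2$. The left side becomes $-2\pi R_0\int_{\R^2}\tfrac{U}{\omega_0}[\nabla_{\bar y}W,\nabla_{\bar y}\Psi]\,\md\bar y$; integrating the right side once more by parts in $\bar y$ and using $\calR(\pi/2)\nabla_{\bar y}W\cdot\nabla_{\bar y}\Psi=[\nabla_{\bar y}W,\nabla_{\bar y}\Psi]$ yields the same expression. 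Hence the two sides agree against every such $\psi$; as all the quantities involved are continuous on $\R^2\times\T^1$, the identity, first obtained on $\{\bar x\neq0\}$, extends everywhere, which finishes the proof.

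The delicate point I anticipate is the bookkeeping in the $3\times3$ determinant of the second step: the cofactor expansion must be organized so that the $x_3$-dependent rotations disappear through $\det\calR=1$, and one must recognize that the surviving radial and angular cross-terms assemble, via the planar triple-product identity, into precisely the weight $(R_0^2+|\bar x|^2)/R_0$ needed to cancel the coefficient in front of $\tfrac{ue}{\omega_c}$. Everything else --- two integrations by parts and the rotational change of variables --- is routine.
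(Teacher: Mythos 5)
Your proposal is correct and follows essentially the same route as the paper: both arguments observe that $\Divx\left(\frac{ue}{\omega_c}\wedge\nabla_x w\right)=\rot_x\left(\frac{ue}{\omega_c}\right)\cdot\nabla_x w$ lies in $\mathrm{ker}(Be\cdot\nabla_x)$, test against invariant functions $\psi(x)=\Psi(\calR(x_3/R_0)\bar{x})$, integrate by parts, pull back through the Jacobian $\frac{\partial\bar{y}}{\partial x}$, and reduce to the two-dimensional weak identity. The only difference is cosmetic: where the paper asserts the matrix identity $\frac{1}{\omega_c}\frac{\partial\bar{y}}{\partial x}M[e]\frac{^t\partial\bar{y}}{\partial x}=\frac{1}{\omega_0}\calR(\pi/2)$ ``by direct computations'', you carry out the equivalent $3\times 3$ determinant expansion explicitly via the planar Gra\ss mann identity, and your bookkeeping (the weight $(R_0^2+|\bar{x}|^2)/R_0$ cancelling the coefficient of $\frac{ue}{\omega_c}$) checks out.
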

\begin{proof}
As before, we perform the computation in a distribution sense. We already know that the vector field $\rot_x\left(\frac{ue}{\omega_c} \right)\cdot\nabla_x$ is in involution with $Be\cdot\nabla_x$, and therefore 
\[
\Divx\left( \dfrac{u e}{\omega_c}\wedge\nabla_x w \right) = \rot_x\left(\dfrac{ue}{\omega_c} \right)\cdot\nabla_x w \in \mathrm{ker}(Be\cdot\nabla_x),
\]
it is enough to consider test functions $\psi(x) = \Psi(\calR(x_3 / R_0)\bar{x}), \Psi \in C^1_c(\R^2)$
\begin{align*}
\int_{\R^2\times\T^1}&{\Divx\left( \dfrac{u e}{\omega_c}\wedge\nabla_x w \right) \psi(x)} \mathrm{d}x
=- \int_{\R^2\times\T^1}{\dfrac{u}{\omega_c}M[e]\nabla_x \omega \cdot \nabla_x \psi}\mathrm{d}x\\
&= - \int_{\R^2\times\T^1}{\dfrac{U(\calR(x_3 /R_0)\bar{x})}{\omega_c} \dfrac{\partial\bar{y}}{\partial x}M[e]\dfrac{^t\partial\bar{y}}{\partial x}(\nabla_{\bar{y}}W)(\calR(x_3 /R_0)\bar{x})\cdot (\nabla_{\bar{y}}\Psi)(\calR(x_3 /R_0)\bar{x})}\mathrm{d}x.
\end{align*}
By direct computations, we obtain
\[
\dfrac{1}{\omega_c}\dfrac{\partial\bar{y}}{\partial x}M[e]\dfrac{^t\partial\bar{y}}{\partial x} = \dfrac{1}{\omega_0}\calR\left(\dfrac{\pi}{2}\right),\,\,\omega_0 =\dfrac{qB_0}{m},
\]
and therefore the previous calculations lead to
\begin{align*}
\int_{\R^2\times\T^1}&{\Divx\left( \dfrac{u e}{\omega_c}\wedge\nabla_x w \right) \psi(x)}\mathrm{d}x= 2\pi R_0 \int_{\R^2}{\dfrac{U(\bar{y})}{\omega_0}{^\perp}\nabla_{\bar{y}}W\cdot \nabla_{\bar{y}}\Psi}\mathrm{d}\bar{y}\\
&= -2\pi R_0 \int_{\R^2}{\mathrm{div}_{\bar{y}}\left( \dfrac{U(\bar{y})}{\omega_0}{^\perp}\nabla_{\bar{y}}W\right)\Psi(\bar{y})}\mathrm{d}\bar{y}\\
&= - \int_{\R^2\times\T^1}{\left[ \mathrm{div}_{\bar{y}}\left( \dfrac{U(\bar{y})}{\omega_0}{^\perp}\nabla_{\bar{y}}W\right) \right]\left( \bar{y}= \calR\left( \dfrac{x_3}{R_0}\right)\right)\psi(x)}\mathrm{d}x. 
\end{align*}
We deduce that
\[
\Divx\left( \dfrac{u e}{\omega_c}\wedge\nabla_x w \right) =  \mathrm{div}_{\bar{y}}\left( \dfrac{U}{\omega_0}{^\perp}\nabla_{\bar{y}}W \right).
\]
\end{proof}

Combining Lemma \ref{Laplace} and Lemma \ref{ConservLamw}, we derive the limit model with respect to the new unknown $N$. The potential $\Phi = \phi[n]$ writes $\phi(t,x) = \Phi(t,\bar{y}= \calR(x_3 /R_0)\bar{x})$ where $\Phi(t,\bar{y})$ solves the elliptic equation
\[
- \epsilon_0 \mathrm{div}_{\bar{y}}\left[ \left(  I_2 + \dfrac{{^\perp} \bar{y} \otimes {^\perp} \bar{y}}{R_0 ^2}\right)\nabla_{\bar{y}}\Phi(t,\bar{y})\right] = q N(t,\bar{y}),\,\, \bar{y}\in\R^2.
\] We supplement this elliptic equation by the condition $\lim_{|\bar{y}|\to +\infty}\Phi(t,\bar{y})=0$ and we denote by $\Phi[N]$ the solution corresponding to the concentration $N$. We introduce $K[N]=\sigma(1+\ln N) + \frac{q}{m}\Phi[N]$. The time evolution for the concentration $N$ is given by
\[
\partial_t N + \mathrm{div}_{\bar{y}}\left(\dfrac{N}{\omega_0}\calR\left(\dfrac{\pi}{2} \right)\nabla_{\bar{y}}K[N]  \right) =0,\,\,(t,\bar{y})\in\R_+ \times\R^2 ,
\]
and the initial condition
\[
N(0,\bar{y}) = N_{\mathrm{in}}(\bar{y}),\,\, \bar{y}\in \R^2 ,
\]
where $n_{\mathrm{in}}(x) = N_{\mathrm{in}}(\calR(x_3 /R_0)\bar{x}),x\in\R^2\times\T^1$.

\section{Example of regular solutions for limit model}
\label{Example}
In this section, we construct a regular solution for the reduced limit model obtained in Section \ref{Cylin}. To simplify the presentation, we assume that the physical parameters $\sigma, q, m$ and $\epsilon_0$ are normalized. Thus, we focus on establishing the well-posedness of the following system: 
\begin{equation}
\label{equ:EquCylinModLim}
\partial_t n + \Divx\left[ \dfrac{ne}{B}\wedge \nabla_x \left( (1+\ln n) + \Phi[n] \right)\right] =0,\quad (t,x)\in \R_+ \times \R^2 \times \T^1 ,
\end{equation}
with the constraint for the concentration $n$ 
\begin{equation}
\label{equ:constraint_n}
Be \cdot \nabla_x n =0,
\end{equation}
where $\Phi[n]$ stands for the Poisson electric potential which solves
\begin{equation}
\label{equ:PoiCylinLim}
 - \Delta_x \Phi[n(t)](x) =  n(t,x),\,\,(t,x)\in \R_+ \times \R^2 \times \T^1,
\end{equation}
and the external magnetic field is given by  $Be = (x_2, -x_1,1)^t$. 
Denoting $E[n(t)] = -\nabla_x \Phi[n(t)]$ the electric field derives from the potential $\Phi[n(t)]$. 
We supplement our model by the initial condition
\begin{equation*}
\label{equ:IniCylinLim}
n(0,x) = n_{\mathrm{in}}(x),\,\, x\in \R^2 \times \T^1,
\end{equation*}
where $n_{\mathrm{in}}$ is a smooth function and  belongs to $ \mathrm{ker}(Be\cdot\nabla_x)$. 

We follow the same arguments as in the well-posedness proof for the Vlasov-Poisson problem with an external magnetic field, as discussed in \cite{BosSIAM2019, Bos2020}. Our goal is to obtain a priori bounds for the $L^\infty$ norm of $E[n]$ and $ \partial_x E[n]$, not in the full space $\R^3$, but in $\R^2\times\T^1$. These bounds rely on estimating the fundamental solution of Laplace's equation on $\R^2\times \T^1$. Therefore, we begin by investigating the Poisson equation for a given density in this domain and deriving a fundamental solution for this purpose. 
\subsection{Fundamental solution of Laplace's equation on $\R^2\times\T^1$}
Consider a function $\Xi: \R^2\times\T^1 \to \R$ satisfying
\begin{equation}
\label{equ:EquPoiPerZ}
-\Delta _x \Xi = \delta_0(\bar{x},x_3),\,\,x=(\bar{x}=(x_1,x_2),x_3)\in \R^2\times\T^1,
\end{equation}
in the sense of distributions, where $\delta_0(x)$ denotes the Dirac measure on $\R^2\times\T^1$ giving unit mass to the point $0$.
\begin{lemma}
\label{PerFundSol}
Let $x = (\bar{x},x_3) \in \R^2\times \T^1$. Then
$
\Xi (x) = -\frac{1}{4\pi ^2} \ln |\bar{x}| + \Gamma (x),
$
satifies \eqref{equ:EquPoiPerZ}, where
\[
\Gamma(x) = \int_{0}^{\infty}\dfrac{1}{4\pi t}e^{{-|\bar{x}|^2}/{4t}} \dfrac{1}{\pi}\left[  \sum_{n=1}^{\infty} e^{-n^2 t}\cos(n x_3)\right]\mathrm{d} t .
\]
\end{lemma}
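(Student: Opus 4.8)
The plan is to realize $\Xi$ through the heat semigroup on $\R^2\times\T^1$, after subtracting the zero Fourier mode in the periodic variable $x_3$ — which is exactly what the term $-\frac1{4\pi^2}\ln|\bar x|$ compensates, while $\Gamma$ is precisely the time integral of the ``fluctuating part'' of the heat kernel. First I would set up the two heat kernels: $g_t(\bar x)=\frac1{4\pi t}e^{-|\bar x|^2/4t}$ on $\R^2$, and $\Theta_t(x_3)=\frac1{2\pi}\sum_{n\in\Z}e^{-n^2t}e^{inx_3}=\frac1{2\pi}+\frac1\pi\sum_{n\ge1}e^{-n^2t}\cos(nx_3)$ on $\T^1=\R/2\pi\Z$, which satisfy $\partial_tg_t=\Delta_{\bar x}g_t$, $\partial_t\Theta_t=\partial_{x_3}^2\Theta_t$, $\int_{\R^2}g_t=\int_{\T^1}\Theta_t=1$, $g_t\to\delta_0(\bar x)$ and $\Theta_t\to\delta_{\T^1}$ weakly-$*$ as $t\searrow0$, and $\Theta_t\to\frac1{2\pi}$ uniformly as $t\to\infty$. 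Setting $\widetilde\Theta_t:=\Theta_t-\frac1{2\pi}=\frac1\pi\sum_{n\ge1}e^{-n^2t}\cos(nx_3)$ and $p_t(x):=g_t(\bar x)\widetilde\Theta_t(x_3)$, the product rule together with $\Delta_x=\Delta_{\bar x}+\partial_{x_3}^2$ gives $\partial_tp_t=\Delta_xp_t$ for $t>0$, and each $p_t$ is smooth for $t>0$. The Bessel-function form that a direct Fourier-in-$x_3$ computation would produce is recovered here via the subordination identity $\int_0^\infty\frac1{4\pi t}e^{-|\bar x|^2/4t}e^{-n^2t}\,dt=\frac1{2\pi}K_0(n|\bar x|)$, with $K_0$ the modified Bessel function.

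Next I would check that $\Gamma:=\int_0^\infty p_t\,dt$ is well defined and coincides with the stated expression. Since $\|g_t\|_{L^1(\R^2)}=1$ and $\|\widetilde\Theta_t\|_{L^1(\T^1)}\le\|\Theta_t\|_{L^1(\T^1)}+1=2$ for all $t>0$, while $\|\widetilde\Theta_t\|_{L^\infty(\T^1)}\le\frac1\pi\sum_{n\ge1}e^{-n^2t}\le Ce^{-t}$ for $t\ge1$, one gets $\|p_t\|_{L^1(\R^2\times\T^1)}\le2$ on $(0,1]$ and $\|p_t\|_{L^1}\le 2\pi Ce^{-t}$ on $[1,\infty)$; hence $\int_0^\infty\|p_t\|_{L^1}\,dt<\infty$ and $\Gamma\in L^1(\R^2\times\T^1)$. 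Expanding $\widetilde\Theta_t$ as a series and interchanging sum and integral by Tonelli (each term controlled via the subordination identity and $K_0(z)\sim\sqrt{\pi/2z}\,e^{-z}$, so $\sum_nK_0(n|\bar x|)<\infty$ for $|\bar x|>0$) identifies $\Gamma$ with the formula in the statement.

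Then I would compute $-\Delta\Xi$ in $\mathcal D'(\R^2\times\T^1)$. For $\phi\in C^\infty(\R^2\times\T^1)$ with $\bar x$-compact support, using $\Gamma=\int_0^\infty p_t\,dt$ in $L^1$, integration by parts for each fixed $t>0$, $\Delta_xp_t=\partial_tp_t$, and Fubini,
\[
\langle-\Delta\Gamma,\phi\rangle=\int_0^\infty\langle-\Delta_xp_t,\phi\rangle\,dt=-\int_0^\infty\frac{d}{dt}\langle p_t,\phi\rangle\,dt=\lim_{t\searrow0}\langle p_t,\phi\rangle-\lim_{t\to\infty}\langle p_t,\phi\rangle .
\]
The limit as $t\to\infty$ is $0$ since $\|p_t\|_{L^1}\to0$, and the limit as $t\searrow0$ equals $\phi(0,0)-\frac1{2\pi}\int_{\T^1}\phi(0,x_3)\,dx_3$ because $g_t\to\delta_0(\bar x)$ and $\widetilde\Theta_t\to\delta_{\T^1}-\frac1{2\pi}$. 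Hence $-\Delta\Gamma=\delta_0-\frac1{2\pi}\,\delta_0(\bar x)\otimes\mathds{1}_{\T^1}$, where $\delta_0(\bar x)\otimes\mathds{1}_{\T^1}$ denotes the distribution $\phi\mapsto\int_{\T^1}\phi(0,x_3)\,dx_3$. On the other hand, since $\Delta_{\bar x}\ln|\bar x|=2\pi\delta_0(\bar x)$ in $\mathcal D'(\R^2)$ and $\ln|\bar x|$ is independent of $x_3$ (so the $\partial_{x_3}^2\phi$ contribution integrates to zero over $\T^1$), one gets $-\Delta_x\!\left(-\frac1{4\pi^2}\ln|\bar x|\right)=\frac1{2\pi}\,\delta_0(\bar x)\otimes\mathds{1}_{\T^1}$ in $\mathcal D'(\R^2\times\T^1)$; adding the two contributions, the two copies of $\frac1{2\pi}\delta_0(\bar x)\otimes\mathds{1}_{\T^1}$ cancel and $-\Delta_x\Xi=\delta_0$, which is the claim.

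The main obstacle is the third step: justifying rigorously the interchange of $\Delta_x$ with the $t$-integral and identifying $\lim_{t\searrow0}\langle p_t,\phi\rangle$. The delicate point is that near $t=0$ the two factors $g_t$ and $\widetilde\Theta_t$ individually concentrate — and $\widetilde\Theta_t$ even blows up like $t^{-1/2}$ at $x_3=0$ — so pointwise bounds on $p_t$ are useless; what saves the argument is the uniform bound $\|p_t\|_{L^1}\le2$ from the second step, combined with the product structure $p_t=g_t\otimes\widetilde\Theta_t$ and the fact that $(g_t)_{t>0}$ and $(\Theta_t)_{t>0}$ are approximate identities, which together legitimize passing to the weak limit.
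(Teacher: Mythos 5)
Your proof is correct, but it is organized differently from the paper's. The paper expands $\Xi$ in a Fourier series in $x_3$, uses Poisson summation to write $\delta_0(x_3)=\frac{1}{2\pi}\sum_{n\in\Z}e^{inx_3}$, reduces the problem to the family of two-dimensional Helmholtz equations $-\Delta_{\bar x}\beta_n+n^2\beta_n=\frac{1}{2\pi}\delta_0(\bar x)$, solves each one by the Bessel-potential formula $\beta_n=\frac{1}{2\pi}\int_0^\infty\frac{1}{4\pi t}e^{-|\bar x|^2/4t}e^{-n^2t}\,\mathrm{d}t$ (with the logarithm appearing as the $n=0$ mode), and sums over $n$. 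You instead work globally on $\R^2\times\T^1$ with the product heat kernel, subtract the $x_3$-mean, and obtain $-\Delta_x\Gamma=\lim_{t\searrow0}p_t-\lim_{t\to\infty}p_t=\delta_0-\frac{1}{2\pi}\delta_0(\bar x)\otimes\mathds{1}_{\T^1}$ by integrating the heat equation in time; the logarithmic term then restores the missing $\frac{1}{2\pi}\delta_0(\bar x)\otimes\mathds{1}_{\T^1}$. The two routes meet through the same subordination identity, but yours treats all Fourier modes at once and makes the distributional bookkeeping explicit, whereas the paper's mode-by-mode argument is more formal (it does not justify interchanging the Laplacian with the sum over $n$, nor the convergence of $\sum_n\beta_n$). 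Your version also produces the $L^1$ bound $\int_0^\infty\|p_t\|_{L^1}\,\mathrm{d}t<\infty$ as a by-product, which the paper needs anyway in Lemma 7.5; the paper's version has the advantage of exhibiting the Fourier coefficients $\beta_n$ directly, which is convenient for the subsequent pointwise estimates of $\Gamma$. The one step you rightly flag as delicate, the identification of $\lim_{t\searrow0}\langle p_t,\phi\rangle$, is sound because of the tensor-product structure $p_t=g_t\otimes\tilde\Theta_t$: the inner average against $\tilde\Theta_t$ converges uniformly in $\bar x$ to a continuous limit, after which the outer approximate identity $g_t$ can be passed to the limit.
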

\begin{proof}
We have
\begin{align}
\label{equ:SumPerZ}
-\Delta_x \Xi = \dfrac{1}{2\pi}\sum_{n\in\Z} \delta_0(\bar{x})e^{i n x_3},
\end{align}
where we have used the Poisson summation formula
$
\delta_0(x_3) = \frac{1}{2\pi}\sum_{n\in\Z} e^{i n x_3}.
$
Indeed,  $\delta_0(x_3)$ is periodic with period $2\pi$, it can be represented as a Fourier series
\[
\delta_0(x_3) = \sum_{n\in\Z}c_n e^{in x_3},
\]
where the Fourier coefficients are
\begin{align*}
c_n  &= \dfrac{1}{2\pi} \int_{-\pi}^{\pi}\delta_0(x_3 ) e^{-in x_3}\mathrm{d} x_3  = \dfrac{1}{2\pi}.
\end{align*}
On the other hand, as $\Xi$ is periodic in $x_3$ of period $2\pi$, we also have
\begin{equation*}
\Xi(x) = \sum_{n\in \Z} \beta_n(\bar{x}) e^{inx_3},
\end{equation*}
therefore
\begin{equation}
\label{equ:EquLap}
-\Delta_x \Xi(\bar{x},x_3) = \sum_{n\in\Z}(-\Delta_{\bar{x}}\beta_n(\bar{x}) + n^2 \beta_n(\bar{x}))e^{inx_3}.
\end{equation}
Comparing \eqref{equ:SumPerZ} and \eqref{equ:EquLap} yields the following linear elliptic equation in the whole space $\R^2$ for any $n\in\Z \backslash\left\{0\right\}$
\begin{equation}
\label{equ:EquCoeffs}
-\Delta_{\bar{x}}\beta_n(\bar{x}) + n^2 \beta_n(\bar{x}) = \dfrac{1}{2\pi}\delta_0(\bar{x}),\,\, \bar{x}\in\R^2.
\end{equation}
A solution to \eqref{equ:EquCoeffs} can be found using the Fourier transform for linear equation. It is known that the solution to this equation is given in terms of the Bessel potential $U(\bar{x})$ as $\beta_n(\bar{x}) = \frac{1}{2\pi}(U \star \delta_0)(\bar{x})$, cf. \cite{Evans}, where 
$
U(\bar{x}) = \int_{0}^{\infty}\frac{1}{4\pi t}e^{-|\bar{x}|^2/{4t}}e^{-n^2 t}\mathrm{d} t.
$
Thus, we have the solution formula
\[
\beta_n(\bar{x}) = \dfrac{1}{2\pi}\int_{0}^{\infty}\dfrac{1}{4\pi t}e^{-|\bar{x}|^2/{4t}}e^{-n^2 t}\mathrm{d} t.
\]
In the case $n=0$, equation \eqref{equ:EquCoeffs} becomes the Laplace equation on $\R^2$. It is well known that the fundamental solution is given by $-\frac{1}{4\pi ^2}\ln |\bar{x}|$.
Finally, by summing over all cases, we obtain the result stated in the lemma.
\end{proof}

Let us denote $\Gamma_{1,2}(t,\bar{x}) := \frac{1}{4\pi t}e^{-|\bar{x}|^2/{4t}}$ and $\Gamma_3 (t,x_3) := \frac{1}{2\pi}\left[1 + 2 \sum_{n=1}^{\infty}e^{-n^2 t}\cos(n x_3) \right]$. It is known that $\Gamma_{1,2}$ is a heat kernel on $\R^2$ of the heat equation 
\begin{align*}
\left\{
    \begin{array}{ll}
      \partial_t \Gamma_{1,2}(t,\bar{x}) - \Delta_{\bar{x}}\Gamma_{1,2}(t,\bar{x}) = 0,\,\, (t,\bar{x})\in \R_+\times \R^2 ,\\
\hspace*{30mm}\Gamma_{1,2}|_{t=0}(\bar{x}) = \delta_0(\bar{x}),
    \end{array}
  \right. 
\end{align*}
while $\Gamma_3$ is a heat kernel on $\T^1$ of
\begin{align*}
\left\{
    \begin{array}{ll}
\partial_t \Gamma_{3}(t,x_3) - \partial_{x_3}^2\Gamma_{3}(t,x_3) =0,\,\, (t,x_3)\in \R_+\times \T^1 , \\
\hspace*{32mm}\Gamma_{3}|_{t=0}(x_3) = \delta_0(x_3).
 \end{array}
  \right. 
\end{align*}
For a proof of this property, we refer the reader to \cite{CarPanZa20}.
Thus, we find that the function $\Gamma$ in the fundamental solution of Laplace's equation \eqref{equ:EquPoiPerZ} is related to the previous solution of the heat equation as follows: 
\begin{equation}
\label{equ:EquLapHeat}
\Gamma(x) = \int_{0}^{\infty}\Gamma_{1,2}(t,\bar{x})\left[\Gamma_3(t,x_3) -\frac{1}{2\pi} \right]\mathrm{d} t.
\end{equation}
\begin{remark}
The heat kernel $\Gamma_3$ on $\T^1$ can also be expressed using the heat kernel $k_t(x_3) = {(4\pi t)}^{-1/2}e^{-x_3 ^2/{4t}}$ on the real line $\R$ as follows:
\begin{equation}
\label{equ:HeatKerR}
\Gamma_3 (x_3) = \dfrac{1}{2\pi} g_t(x_3): = \dfrac{1}{2\pi}\left[2\pi \sum_{n\in \Z} k_t(x_3 + 2\pi n)\right],\,\, x_3 \in \T^1.
\end{equation}
Indeed, the function $g_t \in L^1(\T^1)$ since 
\[
\| g_t \|_{\T^1} = \int_{\T^1} g_t \mathrm{d} m(x_3) = \sum_{n\in\Z} \int_{\T^1} k_t(x_3 + 2\pi n) \mathrm{d} x_3 = \int_{\R} k_t(x_3) \mathrm{d} x_3 =1 ,
\]
where $\mathrm{d} m(x_3)$ is Haar measure on $\T^1$, $ \mathrm{d} m(x_3) = 1/{(2\pi)}\,\mathrm{d} x_3$. Thus, the periodic function $g_t$ can be written in the form of the Fourier serie
\[
g_t(x_3) =  \sum_{n\in \Z} \hat{g}_t(n) e^{i n x_3},
\]
where $(\hat{g}_t (n))_{n\in\Z}$ is the sequence of the Fourier coeffiecients which is given by
\begin{align*}
\hat{g}_t(n) &= \dfrac{1}{2\pi}\int_{\T^1} g_t(x_3) e^{-i n x_3} \mathrm{d} m(x_3) 
= \dfrac{1}{4\pi^2}\sum_{n\in\Z}\int_{\T^1} k_t(x_3 +2\pi n) e^{-i n (x_3 + 2\pi n)} \mathrm{d} x_3 \\
&=\dfrac{1}{4\pi^2}\int_{\R}k_t(x_3)e^{-i n x_3}\mathrm{d} x_3 = \dfrac{1}{4\pi^2}\hat{k}_t(n) =\dfrac{1}{2\pi}\left[\dfrac{1}{2\pi} e^{-n^2 t}\right],
\end{align*}
where $\hat{k}_t(n)$ is the Fourier transform of  the function $k_t(x_3)$.
\end{remark}

Since we need the bounds of the function $\Gamma$ and its derivatives, we must estimate the function $\Gamma_3 -\frac{1}{2\pi}$, as well as the first and second derivates of $\Gamma_3$ from \eqref{equ:EquLapHeat}. We will use the arguments in \cite{Maheux} to obtain the bound of $|\Gamma_3 -\frac{1}{2\pi}|$, using the following lemmas. The proofs of the lemmas are standard and are left to the reader.\\
Firstly, using the formula \eqref{equ:HeatKerR}, we can rewrite the function $\Gamma_3$ on $\T^1$ as follows:
\begin{lemma}
\label{EquiFormHeatKer}
For any $t>0$ and for any $x_3\in\T^1$, we have
\begin{equation*}
g_t(x_3) = \sqrt{\dfrac{\pi}{t}}\exp\left(\dfrac{- x_3^2}{4t} \right)\left( 1+ 2 \sum_{n\geq 1}\exp\left(\dfrac{-\pi^2 n^2}{t} \right)\cosh\left(\dfrac{\pi n x_3}{t}\right) \right).
\end{equation*}
\end{lemma}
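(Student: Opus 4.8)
The plan is to argue directly from the periodization formula \eqref{equ:HeatKerR}, which expresses the heat kernel $\Gamma_3$ on $\T^1$ in terms of the Gaussian kernel $k_t(y)=(4\pi t)^{-1/2}e^{-y^2/(4t)}$ on $\R$, namely
\[
g_t(x_3) = 2\pi \sum_{n\in\Z} k_t(x_3 + 2\pi n),
\]
and to reorganize this series by pairing the index $n$ with $-n$. First I would absorb the prefactor: since $2\pi k_t(y) = \sqrt{\pi/t}\,e^{-y^2/(4t)}$, we have
\[
g_t(x_3) = \sqrt{\frac{\pi}{t}} \sum_{n\in\Z} \exp\left(\frac{-(x_3 + 2\pi n)^2}{4t}\right).
\]

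Next I would expand the square $(x_3 + 2\pi n)^2 = x_3^2 + 4\pi n x_3 + 4\pi^2 n^2$ inside each exponential, which factors the $n$-th summand as
\[
\exp\left(\frac{-(x_3+2\pi n)^2}{4t}\right) = \exp\left(\frac{-x_3^2}{4t}\right)\exp\left(\frac{-\pi^2 n^2}{t}\right)\exp\left(\frac{-\pi n x_3}{t}\right).
\]
Grouping the term indexed by $n\geq 1$ with the one indexed by $-n$, and using $\exp(-\pi n x_3/t)+\exp(\pi n x_3/t) = 2\cosh(\pi n x_3/t)$ together with the $n=0$ contribution $\exp(-x_3^2/(4t))$, yields exactly
\[
g_t(x_3) = \sqrt{\frac{\pi}{t}}\,\exp\left(\frac{-x_3^2}{4t}\right)\left( 1 + 2\sum_{n\geq 1}\exp\left(\frac{-\pi^2 n^2}{t}\right)\cosh\left(\frac{\pi n x_3}{t}\right) \right),
\]
which is the claimed identity. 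The only point requiring a word of justification is that the Gaussian series converges absolutely and uniformly for $(t,x_3)$ in compact subsets of $\R_+\times\T^1$, so the rearrangement pairing $\pm n$ is legitimate.

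There is essentially no real obstacle: the statement is merely the Jacobi theta transformation written in a $\cosh$-symmetric form, and the argument is a one-line rearrangement once \eqref{equ:HeatKerR} is in hand. As an alternative route one could instead apply the Poisson summation formula to the Fourier representation $g_t(x_3) = \sum_{n\in\Z} e^{-n^2 t}e^{inx_3}$ recorded in the preceding remark, recovering the same spatial-sum expression; but the direct computation from \eqref{equ:HeatKerR} is the shortest and is the one I would present.
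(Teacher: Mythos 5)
Your computation is correct: starting from the definition $g_t(x_3)=2\pi\sum_{n\in\Z}k_t(x_3+2\pi n)$ in \eqref{equ:HeatKerR}, absorbing the prefactor via $2\pi k_t(y)=\sqrt{\pi/t}\,e^{-y^2/4t}$, expanding $(x_3+2\pi n)^2$ and pairing $\pm n$ gives exactly the claimed identity, and absolute convergence of the positive Gaussian series justifies the rearrangement. The paper leaves this lemma "to the reader," and your argument is precisely the standard computation intended.
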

Next, using Lemma \ref{EquiFormHeatKer}, we obtain the following estimate
\begin{lemma}
For any $t>0$ and any $x_3\in \T^1 =[-\pi,\pi]$, we have
\begin{equation}
\label{FirstEstHeatKer}
\exp\left(\dfrac{- x_3^2}{4t} \right) g_t(0) \leq g_t(x_3) \leq \left[\sqrt{\dfrac{\pi}{t}} + g_t(0) \right] \exp\left(\dfrac{ - x_3^2}{4t} \right).
\end{equation}
\end{lemma}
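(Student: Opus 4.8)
The plan is to run both inequalities directly off the closed formula for $g_t$ furnished by Lemma~\ref{EquiFormHeatKer}:
\[
g_t(x_3) = \sqrt{\dfrac{\pi}{t}}\,\exp\!\left(\dfrac{-x_3^2}{4t}\right)\Sigma(x_3),\qquad \Sigma(x_3):=1 + 2\sum_{n\geq 1}\exp\!\left(\dfrac{-\pi^2 n^2}{t}\right)\cosh\!\left(\dfrac{\pi n x_3}{t}\right).
\]
Evaluating at $x_3=0$ (where $\cosh$ takes the value $1$) shows $g_t(0)=\sqrt{\pi/t}\,\Sigma(0)$ with $\Sigma(0)=1+2\sum_{n\geq1}e^{-\pi^2 n^2/t}$. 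Hence, after dividing by $\sqrt{\pi/t}\,e^{-x_3^2/4t}$, both claimed estimates reduce to the two-sided bound $\Sigma(0)\leq\Sigma(x_3)\leq 1+\Sigma(0)$ for $x_3\in[-\pi,\pi]$.

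The lower bound is immediate: $\cosh\geq 1$ on $\R$, so each summand obeys $e^{-\pi^2 n^2/t}\cosh(\pi n x_3/t)\geq e^{-\pi^2 n^2/t}$, whence $\Sigma(x_3)\geq\Sigma(0)$; multiplying back by $\sqrt{\pi/t}\,e^{-x_3^2/4t}$ gives $g_t(x_3)\geq e^{-x_3^2/4t}g_t(0)$.

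For the upper bound I would use that $|x_3|\leq\pi$ on $\T^1$, so $|\pi n x_3/t|\leq\pi^2 n/t$, and, by monotonicity of $\cosh$ on $\R_+$, $\cosh(\pi n x_3/t)\leq\cosh(\pi^2 n/t)$. The key algebraic step is the identity
\[
2\,e^{-\pi^2 n^2/t}\cosh\!\left(\dfrac{\pi^2 n}{t}\right)=e^{-\pi^2 n(n-1)/t}+e^{-\pi^2 n(n+1)/t},
\]
which, after summation over $n\geq 1$ and the reindexing $m=n+1$ in the second series, yields
\[
2\sum_{n\geq 1}e^{-\pi^2 n^2/t}\cosh\!\left(\dfrac{\pi^2 n}{t}\right)=\sum_{m\geq 1}e^{-\pi^2 m(m-1)/t}+\sum_{m\geq 2}e^{-\pi^2 m(m-1)/t}=1+2\sum_{m\geq 2}e^{-\pi^2 m(m-1)/t}.
\]
Since $m(m-1)\geq(m-1)^2$ for every $m\geq 1$, the last sum is at most $\sum_{k\geq 1}e^{-\pi^2 k^2/t}$, so $2\sum_{n\geq1}e^{-\pi^2 n^2/t}\cosh(\pi n x_3/t)\leq 1+2\sum_{k\geq1}e^{-\pi^2 k^2/t}=\Sigma(0)$, i.e. $\Sigma(x_3)\leq 1+\Sigma(0)$. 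Multiplying by $\sqrt{\pi/t}\,e^{-x_3^2/4t}$ turns this into $g_t(x_3)\leq(\sqrt{\pi/t}+g_t(0))e^{-x_3^2/4t}$.

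The only point requiring care is the upper bound: once $\cosh(\pi n x_3/t)$ is replaced by its value at the endpoint $x_3=\pi$, the exponents $n(n-1)$ and $n(n+1)$ must be recognized as overlapping arithmetic progressions that recombine into a single series indexed by $m(m-1)$, after which the crude estimate $m(m-1)\geq(m-1)^2$ brings us back exactly to the series defining $g_t(0)$. All the remaining manipulations are termwise comparisons of convergent series of positive terms.
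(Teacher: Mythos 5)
Your proof is correct: the paper leaves this lemma to the reader but explicitly signals that it follows from Lemma~\ref{EquiFormHeatKer}, and your argument is exactly that intended route — reduce both bounds to $\Sigma(0)\leq\Sigma(x_3)\leq 1+\Sigma(0)$, get the lower bound from $\cosh\geq 1$, and get the upper bound from $\cosh(\pi n x_3/t)\leq\cosh(\pi^2 n/t)$ together with the recombination $2e^{-\pi^2 n^2/t}\cosh(\pi^2 n/t)=e^{-\pi^2 n(n-1)/t}+e^{-\pi^2 n(n+1)/t}$ and the comparison $m(m-1)\geq(m-1)^2$. All steps check out.
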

We need the estimate of the function $g_t(x_3)$ at $x_3 =0$.
\begin{lemma}
\label{Estginit}
For any $t>0$, we have
\[
\sqrt{\dfrac{\pi}{t}} \leq g_t(0) \leq 1+ \sqrt{\dfrac{\pi}{t}},
\]
and 
\[
2 e^{-t} \leq g_t(0) -1 \leq \dfrac{2 e^{-t}}{1- e^{-t}}.
\]
Consequently, there exist positive constants $C_1, C_2$ such that $  g_t(0) -1\leq C_1 \frac{e^{-C_2 t}}{\sqrt{t}}$.
\end{lemma}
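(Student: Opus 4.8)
The plan is to reduce everything to two exact closed forms for $g_t(0)$ and then do elementary series-versus-integral comparisons. First I would evaluate the given representations at $x_3=0$. From $\Gamma_3(t,x_3)=\tfrac1{2\pi}g_t(x_3)$ together with the Fourier series $\Gamma_3(t,x_3)=\tfrac1{2\pi}\bigl[1+2\sum_{n\ge1}e^{-n^2t}\cos(nx_3)\bigr]$ one gets $g_t(0)=1+2\sum_{n\ge1}e^{-n^2t}$, and from Lemma~\ref{EquiFormHeatKer} at $x_3=0$ (using $\cosh 0=1$) one gets the dual, theta-transformed form
\[
g_t(0)=\sqrt{\tfrac{\pi}{t}}\Bigl(1+2\sum_{n\ge1}e^{-\pi^2n^2/t}\Bigr).
\]

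Next I would prove the first chain of inequalities. The lower bound $g_t(0)\ge\sqrt{\pi/t}$ is immediate from the dual form by discarding the nonnegative sum. For the upper bound, since $x\mapsto e^{-x^2t}$ is nonincreasing on $[0,\infty)$, a comparison of the series with its integral gives $\sum_{n\ge1}e^{-n^2t}\le\int_0^{\infty}e^{-x^2t}\,\mathrm dx=\tfrac12\sqrt{\pi/t}$, whence $g_t(0)\le1+\sqrt{\pi/t}$. For the second chain I would write $g_t(0)-1=2\sum_{n\ge1}e^{-n^2t}$: keeping only the $n=1$ term yields $g_t(0)-1\ge 2e^{-t}$, and using $n^2\ge n$ for $n\ge1$ together with the geometric sum yields $g_t(0)-1\le 2\sum_{n\ge1}e^{-nt}=\dfrac{2e^{-t}}{1-e^{-t}}$.

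Finally, for the stated consequence I would split on $t\le1$ and $t\ge1$. On $0<t\le1$, use $g_t(0)-1\le\sqrt{\pi/t}$ from the upper bound just obtained and the trivial estimate $1\le e^{1/2}e^{-t/2}$ to get $g_t(0)-1\le\sqrt{\pi}\,e^{1/2}\,e^{-t/2}/\sqrt t$. On $t\ge1$, use $g_t(0)-1\le\dfrac{2e^{-t}}{1-e^{-t}}\le\dfrac{2e^{-t}}{1-e^{-1}}$ and note that $\sqrt t\,e^{-t/2}$ is decreasing on $[1,\infty)$ with value $e^{-1/2}$ at $t=1$, so that $g_t(0)-1\le\dfrac{2e^{-1/2}}{1-e^{-1}}\,e^{-t/2}/\sqrt t$. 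Taking $C_2=\tfrac12$ and $C_1=\max\bigl\{\sqrt{\pi}\,e^{1/2},\ 2e^{-1/2}/(1-e^{-1})\bigr\}$ then gives the bound on all of $(0,\infty)$.

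None of the steps is genuinely hard; the only mild subtlety is bookkeeping which representation to use where — the theta-transformed form is needed only for the clean lower bound $\sqrt{\pi/t}$, while the Fourier series plus an integral (resp. geometric) comparison handles the upper bounds — and then matching the small-$t$ behavior $\sim\sqrt{\pi/t}$ with the large-$t$ behavior $\sim 2e^{-t}$ when patching the final $e^{-C_2t}/\sqrt t$ estimate, which is why a case split is the natural route.
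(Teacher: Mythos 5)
Your proof is correct. The paper explicitly leaves the proof of this lemma to the reader (``The proofs of the lemmas are standard and are left to the reader''), and your argument is exactly the standard one: the two theta-function representations $g_t(0)=1+2\sum_{n\ge 1}e^{-n^2t}=\sqrt{\pi/t}\,(1+2\sum_{n\ge 1}e^{-\pi^2n^2/t})$, an integral comparison and a geometric-series comparison for the upper bounds, the single term $n=1$ for the lower bound, and a split at $t=1$ for the final estimate. All steps check out, including the monotonicity of $\sqrt{t}\,e^{-t/2}$ on $[1,\infty)$ used in the patching.
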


Now, the following lemma provides estimates for $\Gamma_3 - \frac{1}{2\pi}$ and its derivatives on $\T^1$. 
\begin{lemma}
\label{BoundHeatT1}
Let $\Gamma_3(t,x_3) = \frac{1}{2\pi}\left[1 + 2 \sum_{n=1}^{\infty}e^{-n^2 t}\cos(n x_3) \right]$ represent the heat kernel on $\T^1$. Then there exist constants $C_1, C_2$, and $C_3$,  which may vary from line to line such that:
\begin{equation}
\label{FirstEstHeatKerbis}
\left| \Gamma_3 (t,x_3) - \dfrac{1}{2\pi} \right| \leq C_1\dfrac{1}{\sqrt{t}}e^{-C_2 t} e^{-C_3 x_3 ^2 /{4t}},\,\, t>0, x_3 \in\T^1,
\end{equation}
\begin{equation}
\label{SecEstHeatKer}
|\partial_{x_3}\Gamma_3 (t,x_3)| \leq C_1\dfrac{1}{t}e^{-C_2 t} e^{-C_3 x_3 ^2 /{4t}},\,\, t>0, x_3 \in\T^1,
\end{equation}
\begin{equation}
\label{ThirdEstHeatKer}
|\partial_{x_3}^2 \Gamma_3 (t,x_3)| \leq C_1\dfrac{1}{t^{3/2}}e^{-C_2 t} e^{-C_3 x_3 ^2 /{4t}},\,\, t>0, x_3\in\T^1.
\end{equation} 
\end{lemma}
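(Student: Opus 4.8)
The plan is to estimate $\Gamma_3$ by splitting the range of $t$ at $t=1$ and using, in each regime, the series representation that converges rapidly there. For $t\ge 1$ I will work directly with the Fourier series $\Gamma_3(t,x_3)-\frac{1}{2\pi}=\frac1\pi\sum_{n\ge 1}e^{-n^2t}\cos(nx_3)$; for $0<t\le 1$ I will use the dual (Poisson‑summed) representation $g_t(x_3)=\sqrt{\pi/t}\sum_{n\in\Z}e^{-(x_3+2\pi n)^2/4t}$ coming from \eqref{equ:HeatKerR} together with $\Gamma_3=\frac1{2\pi}g_t$, feeding in the pointwise comparisons of Lemma \ref{EquiFormHeatKer}, \eqref{FirstEstHeatKer} and Lemma \ref{Estginit}. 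At the end the constants obtained on $(0,1]$ and on $[1,\infty)$ are merged by taking maxima.

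The regime $t\ge 1$ is immediate. Differentiating the Fourier series termwise and using $n^2\ge n$ to sum the geometric series gives $\sum_{n\ge1}n^k e^{-n^2t}\le C_k e^{-t}$, hence $|\Gamma_3-\frac1{2\pi}|+|\partial_{x_3}\Gamma_3|+|\partial_{x_3}^2\Gamma_3|\le C e^{-t}$ for $t\ge1$. Since $x_3\in[-\pi,\pi]$ forces $e^{-C_3x_3^2/4t}\ge e^{-C_3\pi^2/4}$, the right‑hand sides of \eqref{FirstEstHeatKerbis}--\eqref{ThirdEstHeatKer} are bounded below by $c\,t^{-\gamma}e^{-C_2t}$ with $\gamma\in\{1/2,1,3/2\}$; choosing any $C_2<1$ makes $t^{\gamma}e^{-(1-C_2)t}$ bounded on $[1,\infty)$, and the three inequalities follow after enlarging $C_1$.

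For $0<t\le1$ the derivative estimates \eqref{SecEstHeatKer}, \eqref{ThirdEstHeatKer} come from differentiating the image sum term by term: $\partial_{x_3}^k e^{-(x_3+2\pi n)^2/4t}$ is $t^{-k}$ times a Hermite‑type polynomial of degree $k$ in $x_3+2\pi n$ times $e^{-(x_3+2\pi n)^2/4t}$, and the elementary inequality $|y|^j e^{-y^2/4t}\le C_j\,t^{j/2}e^{-y^2/8t}$ turns this into $|\partial_{x_3}^k e^{-(x_3+2\pi n)^2/4t}|\le C_k\,t^{-k/2}e^{-(x_3+2\pi n)^2/8t}$. Thus $|\partial_{x_3}^k\Gamma_3|\le C\,t^{-(k+1)/2}\sum_{n\in\Z}e^{-(x_3+2\pi n)^2/8t}$, and since $|x_3+2\pi n|\ge(2|n|-1)\pi\ge|n|\pi$ for $x_3\in[-\pi,\pi]$ and $|n|\ge1$, the image sum is dominated by its $n=0$ term: $\sum_{n\in\Z}e^{-(x_3+2\pi n)^2/8t}\le e^{-x_3^2/8t}+Ce^{-\pi^2/8t}\le C'e^{-x_3^2/8t}$ on $(0,1]$. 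This gives \eqref{SecEstHeatKer}--\eqref{ThirdEstHeatKer} with $C_3=1/2$, the harmless factor $e^{-C_2t}\ge e^{-C_2}$ being absorbed into $C_1$.

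It remains to treat the zeroth‑order estimate \eqref{FirstEstHeatKerbis} for $0<t\le1$, and this is the step I expect to be the main obstacle, because the constant $\frac1{2\pi}$ subtracted from $\Gamma_3$ blocks the clean termwise argument used above for the derivatives. Writing $\Gamma_3-\frac1{2\pi}=\frac1{2\pi}(g_t(x_3)-1)$, the upper bound $g_t(x_3)\le(\sqrt{\pi/t}+g_t(0))e^{-x_3^2/4t}\le(1+2\sqrt{\pi/t})e^{-x_3^2/4t}$ from \eqref{FirstEstHeatKer} and Lemma \ref{Estginit} controls $g_t(x_3)-1$ from above by $C\,t^{-1/2}e^{-x_3^2/4t}$; for the reverse direction one uses the lower bound $g_t(x_3)\ge\sqrt{\pi/t}\,e^{-x_3^2/4t}$ of \eqref{FirstEstHeatKer} together with $g_t\ge 0$ to bound $1-g_t(x_3)$, which is the delicate comparison and the place where the careful two‑sided heat‑kernel estimates of Lemma \ref{EquiFormHeatKer} and Lemma \ref{Estginit} are essential. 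Combining the two regimes and taking maxima of the resulting constants yields \eqref{FirstEstHeatKerbis}--\eqref{ThirdEstHeatKer} as stated.
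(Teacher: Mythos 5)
Your treatment of the two derivative estimates \eqref{SecEstHeatKer}--\eqref{ThirdEstHeatKer} is correct and genuinely different from the paper's: you differentiate the Poisson-summed image series termwise and absorb the Hermite-type factors via $|y|^{j}e^{-y^{2}/4t}\leq C_{j}t^{j/2}e^{-y^{2}/8t}$, then dominate the off-center images by the $n=0$ term, whereas the paper invokes a parabolic mean-value/gradient estimate from Tian--Zhang applied to solutions of the heat equation. Your route is more elementary and self-contained, at the harmless cost of the smaller Gaussian constant $C_3=1/2$; the large-time regime via the Fourier series is also handled correctly.

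The gap is exactly where you flagged it: the lower half of \eqref{FirstEstHeatKerbis} for $0<t\leq 1$, i.e.\ the control of $1-g_t(x_3)$. This step cannot be completed, because the two-sided inequality \eqref{FirstEstHeatKerbis} is false for any $C_3>0$: fix $x_3\neq 0$ and let $t\searrow 0$; then $g_t(x_3)=\sqrt{\pi/t}\sum_{m\in\Z}e^{-(x_3+2\pi m)^2/4t}\to 0$, so $\left|\Gamma_3(t,x_3)-\frac{1}{2\pi}\right|\to\frac{1}{2\pi}$, while the right-hand side $C_1t^{-1/2}e^{-C_2t}e^{-C_3x_3^2/4t}\to 0$. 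The only available lower bound, $g_t(x_3)\geq e^{-x_3^2/4t}g_t(0)$, yields $1-g_t(x_3)\leq 1-\sqrt{\pi/t}\,e^{-x_3^2/4t}$, which is of order $1$ for small $t$, not of order $t^{-1/2}e^{-C_3x_3^2/4t}$. The paper's own proof has the same defect (it asserts without detail that the lower bound in \eqref{LowUppHeatKer} "deduces" the corresponding lower bound in \eqref{FirstEstHeatKerbis}), so you have reproduced an existing error rather than introduced a new one; but as a proof of the lemma as stated, your argument, like the paper's, is incomplete at precisely this point. What your computation does establish is the one-sided estimate $\Gamma_3-\frac{1}{2\pi}\leq C_1t^{-1/2}e^{-C_2t}e^{-x_3^2/4t}$ together with the crude bound $\left|\Gamma_3-\frac{1}{2\pi}\right|\leq Ct^{-1/2}e^{-C_2t}$; a correct formulation to carry forward would be, e.g., $\left|\Gamma_3-\frac{1}{2\pi}\right|\leq C_1e^{-C_2t}\left(1+t^{-1/2}e^{-C_3x_3^2/4t}\right)$, and the downstream use in Lemma \ref{EstFundSol} would then need to be revisited accordingly.
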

\begin{proof}
Readers can find these results in \cite{CarPanZa20}, even when $\T^1$ is replaced by a more general compact manifold, cf. \cite{TianZa, Zhang}. Here, we outline the main lines of the proof.\\
The bound in \eqref{FirstEstHeatKerbis} is easily obtained as a consequence of Lemma \ref{Estginit} for $t \geq 1$. For $t \leq 1$, using \eqref{equ:HeatKerR} first yields
$
\Gamma_3(t,x_3) -\frac{1}{2\pi} = \frac{1}{2\pi}(g_t(x_3) -1).
$
Then, applying \eqref{FirstEstHeatKer}, we have
\begin{equation}
\label{LowUppHeatKer}
\dfrac{1}{2\pi}\left[\exp\left(\dfrac{- x_3^2}{4t} \right) g_t(0)-1\right] \leq \Gamma_3(t,x_3) -\dfrac{1}{2\pi} \leq \dfrac{1}{2\pi} \left[ \exp\left(\dfrac{ - x_3^2}{4t} \right)\left(\sqrt{\dfrac{\pi}{t}} + g_t(0) -1\right)\right].
\end{equation}
Using the upper bound in \eqref{LowUppHeatKer} and Lemma \ref{Estginit}, we deduce that
\begin{align*}
\Gamma_3(t,x_3) -\dfrac{1}{2\pi} \leq \dfrac{1}{2\pi}\exp\left(\dfrac{ - x_3^2}{4t} \right)\sqrt{\dfrac{\pi}{t}} + \dfrac{1}{2\pi}\exp\left(\dfrac{ - x_3^2}{4t} \right) \dfrac{2 e^{-t}}{1- e^{-t}}.
\end{align*}
If $t \in [\delta_0, 1]$, for some $\delta_0\in (0,1)$, it is  straightforward to show from the previous inequality that there exist positive constants $C_1, C_2$, and $ C_3$ such that $\Gamma_3(t,x_3) -\frac{1}{2\pi} \leq C_1\frac{1}{\sqrt{t}}e^{-C_2 t} e^{-C_3 x_3 ^2 /{4t}}$. On the other hand, for any positive test function $\varphi\in C^\infty_c(\R)$, since $\lim_{t\to 0^+}\left<\Gamma_3 -\frac{1}{2\pi}, \varphi \right> = (1-1/2\pi)\varphi(x_3)$ and $\lim_{t\to 0^+} \left< k_t,\varphi \right> = \varphi(x_3)$, where $k_t(x_3) = (4\pi t)^{-1/2}e^{-x_3 ^2 /{4t}}$ is the heat kernel on $\R$, we deduce that we can choose the positive constants as above to obtain the previous estimate of $\Gamma_3 -1/2$ as $t\to 0^+$. These arguments together give us the upper bound of \eqref{FirstEstHeatKerbis}. 
Similarly, by using the lower bound in \eqref{LowUppHeatKer}, we deduce the corresponding lower bound in \eqref{FirstEstHeatKerbis}. Consequently, we obtain the complete estimate \eqref{FirstEstHeatKerbis}. For the estimates \eqref{SecEstHeatKer} and \eqref{ThirdEstHeatKer}, we apply  Lemma $2.1$ in \cite{TianZa}, which can be extended to the parabolic case (see Lemma $2.3$ in \cite{TianZa}):
\[
|\nabla_x u(t,x_3)| \leq \dfrac{C}{r}\left( \dfrac{1}{r^4}\int_{t-r}^{r}\int_{|y- x_3|<r}|u(s,y)|^2 \mathrm{d}y\mathrm{d}s \right)^{1/2},
\]
where $u$ is asolution of the heat equation $\partial_t u -\partial^2_{x_3}u =0$ in the domain $ [t-r^2,t]\times B(x_3,r)$, with $ r =\sqrt{t}/2 $ for any fixed point $(t,x_3)\in \R\times\T^1$.
\end{proof}

Finally, we provide estimates for the function $\Gamma$ and its derivatives using the relation \eqref{equ:EquLapHeat}, along with the inequalities \eqref{FirstEstHeatKerbis}, \eqref{SecEstHeatKer}, and \eqref{ThirdEstHeatKer}.
\begin{lemma}
\label{EstFundSol}
Let $\Gamma(x)$ be the function on $\R^2\times \T^1$ provided by Lemma \ref{PerFundSol}. Then, we have the following estimates
\[
|\Gamma(x)| \leq  \dfrac{C}{|x|},\quad
 |\nabla_x\Gamma(x)| \leq \dfrac{C}{|x|^2},\quad |D^2_x \Gamma(x)|\leq \dfrac{C}{|x|^3},
\]
where $\nabla_x$ and $D^2_x$ denote the first and second order derivative, respectively. Here, $C$ is a positive constant, which can vary in each estimate.
\end{lemma}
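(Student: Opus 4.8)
The plan is to read the estimates off directly from the representation \eqref{equ:EquLapHeat},
\[
\Gamma(x) = \int_0^\infty \Gamma_{1,2}(t,\bar x)\Big(\Gamma_3(t,x_3) - \tfrac{1}{2\pi}\Big)\,\mathrm{d}t,\qquad \Gamma_{1,2}(t,\bar x) = \frac{1}{4\pi t}\,e^{-|\bar x|^2/4t},
\]
combined with the pointwise heat-kernel bounds of Lemma~\ref{BoundHeatT1}. First I would note that for $x\neq 0$ one may differentiate under the integral sign: since $\Gamma_{1,2}$ depends only on $\bar x$ and $\Gamma_3$ only on $x_3$, a derivative $\partial_{\bar x}^{\,k}\partial_{x_3}^{\,\ell}\Gamma$ has integrand $(\partial_{\bar x}^{\,k}\Gamma_{1,2})\,(\partial_{x_3}^{\,\ell}\Gamma_3)$ (with $\partial_{x_3}^{\,\ell}(\Gamma_3-\tfrac{1}{2\pi})=\partial_{x_3}^{\,\ell}\Gamma_3$ for $\ell\ge1$), and the pointwise bounds below dominate it, on any region $\{|x|\ge\delta>0\}$, by a fixed integrable function of $t$. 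For the Gaussian factor one has the explicit identities $\partial_{\bar x}\Gamma_{1,2}=-\tfrac{\bar x}{2t}\Gamma_{1,2}$ and $\partial_{\bar x}^2\Gamma_{1,2}=\big(\tfrac{\bar x\otimes\bar x}{4t^2}-\tfrac{I_2}{2t}\big)\Gamma_{1,2}$, whence $|\Gamma_{1,2}|\lesssim t^{-1}e^{-|\bar x|^2/4t}$, $|\partial_{\bar x}\Gamma_{1,2}|\lesssim\tfrac{|\bar x|}{t^2}e^{-|\bar x|^2/4t}$ and $|\partial_{\bar x}^2\Gamma_{1,2}|\lesssim\big(\tfrac{|\bar x|^2}{t^3}+\tfrac1{t^2}\big)e^{-|\bar x|^2/4t}$; for the torus factor I would use $|\Gamma_3-\tfrac1{2\pi}|\lesssim t^{-1/2}e^{-C_2 t}e^{-C_3 x_3^2/4t}$, $|\partial_{x_3}\Gamma_3|\lesssim t^{-1}e^{-C_2 t}e^{-C_3 x_3^2/4t}$, $|\partial_{x_3}^2\Gamma_3|\lesssim t^{-3/2}e^{-C_2 t}e^{-C_3 x_3^2/4t}$ from Lemma~\ref{BoundHeatT1}.

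Multiplying these, and writing $a^2:=\tfrac14\big(|\bar x|^2+C_3 x_3^2\big)$ so that $e^{-|\bar x|^2/4t}e^{-C_3 x_3^2/4t}=e^{-a^2/t}$, every case collapses to an integral $\int_0^\infty t^{-\alpha}e^{-a^2/t}e^{-C_2 t}\,\mathrm{d}t$, possibly times a power of $|\bar x|$: namely $\alpha=\tfrac32$ for $\Gamma$; $\alpha=2$ for $\partial_{x_3}\Gamma$ and $\alpha=\tfrac52$ with a factor $|\bar x|$ for $\partial_{\bar x}\Gamma$; and $\alpha=\tfrac52$ for $\partial_{x_3}^2\Gamma$, $\alpha=3$ with a factor $|\bar x|$ for $\partial_{\bar x}\partial_{x_3}\Gamma$, and the two terms $\alpha=\tfrac72$ (factor $|\bar x|^2$) and $\alpha=\tfrac52$ for $\partial_{\bar x}^2\Gamma$. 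The one computation to carry out is the elementary change of variables $s=a^2/t$, which gives, for $\alpha>1$ and $a>0$,
\[
\int_0^\infty t^{-\alpha}e^{-a^2/t}\,\mathrm{d}t = a^{2-2\alpha}\int_0^\infty s^{\alpha-2}e^{-s}\,\mathrm{d}s = \Gamma(\alpha-1)\,a^{2-2\alpha};
\]
since all occurring $\alpha$ exceed $1$ these integrals converge (the endpoint $t\to 0$ being controlled by $e^{-a^2/t}$ as $a>0$ for $x\neq0$, and $t\to\infty$ by $\alpha>1$, or simply by bounding $e^{-C_2 t}\le1$). It then remains to insert $|\bar x|^2+C_3 x_3^2\ge\min(1,C_3)\,|x|^2$, i.e. $a\gtrsim|x|$, together with $|\bar x|\le|x|$: this yields $|\Gamma|\lesssim a^{-1}\lesssim|x|^{-1}$; $|\partial_{x_3}\Gamma|\lesssim a^{-2}\lesssim|x|^{-2}$ and $|\partial_{\bar x}\Gamma|\lesssim|\bar x|a^{-3}\lesssim|x|^{-2}$, hence $|\nabla_x\Gamma|\lesssim|x|^{-2}$; and $|\partial_{x_3}^2\Gamma|\lesssim a^{-3}$, $|\partial_{\bar x}\partial_{x_3}\Gamma|\lesssim|\bar x|a^{-4}\lesssim|x|^{-3}$, $|\partial_{\bar x}^2\Gamma|\lesssim|\bar x|^2 a^{-5}+a^{-3}\lesssim|x|^{-3}$, hence $|D^2_x\Gamma|\lesssim|x|^{-3}$, as claimed.

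I do not expect a genuinely hard step: the proof is bookkeeping built on the two inputs already available (the representation \eqref{equ:EquLapHeat} and the bounds of Lemma~\ref{BoundHeatT1}) plus the single scalar integral above. The only point deserving a line of care is the justification of differentiation under the integral and of absolute convergence for $x\neq 0$, which follows from the stated domination by $C_\delta\,t^{-\alpha}e^{-c\delta^2/t}e^{-C_2 t}$ on $\{|x|\ge\delta\}$; and one should bear in mind that the singular right-hand sides $C/|x|$, $C/|x|^2$, $C/|x|^3$ are meaningful only for $x\neq0$, the behaviour near the origin being precisely the $1/|x|$-type singularity produced by the factor $e^{-a^2/t}$. (If one keeps the factor $e^{-C_2 t}$ in the scalar integral one even gets an extra factor $e^{-2a\sqrt{C_2}}$, hence exponential decay of $\Gamma$ and its derivatives as $|x|\to\infty$, but only the polynomial bounds are needed here.)
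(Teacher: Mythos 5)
Your proposal is correct and follows essentially the same strategy as the paper: insert the pointwise bounds of Lemma \ref{BoundHeatT1} and the explicit Gaussian derivatives into the representation \eqref{equ:EquLapHeat} and estimate the resulting one-dimensional $t$-integrals. The only (harmless) difference is that you evaluate every such integral at once via the substitution $s=a^{2}/t$ and the Gamma function, whereas the paper handles them case by case with a completing-the-square argument and a $\sup_{t}$ trick; your version is a bit cleaner and also records the exponential decay coming from the retained factor $e^{-C_{2}t}$.
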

\begin{proof}
We will first estimate $\Gamma(x)$. Using \eqref{equ:EquLapHeat} and \eqref{FirstEstHeatKerbis}, we deduce that
\begin{align*}
|\Gamma(x)| &\leq  \dfrac{C_1}{4\pi} \int_{0}^{\infty}t^{-3/2} e^{-C_2 t} e^{-C_3 '|x|^2/{t}}\mathrm{d}t,\quad C_3 ' = \min{(1, C_3)}/4\\
&=  \dfrac{C_1}{4\pi} e^{ -2 \sqrt{C_2 C' _3 }|x|} \int_{0}^{\infty} e^{-\left(\frac{\sqrt{C'_3}|x| - \sqrt{C_2 }t}{\sqrt{t}} \right)^2} 2\mathrm{d} (- t^{-1/2})\\
&=  \dfrac{C_1}{2\pi} e^{ -2 \sqrt{C_2 C' _3 }|x|} \int_{0}^{\infty} e^{-\left(\sqrt{C' _3}|x| u - \sqrt{C_2}u^{-1} \right)^2} \mathrm{d} u,\quad u = t^{-1/2}\\
&=  \dfrac{C_1}{2\pi}\dfrac{1}{\sqrt{C' _3}|x|} e^{ -2 \sqrt{C_2 C' _3 }|x|} \int_{0}^{\infty} e^{- \left(\theta - \sqrt{C_2 C' _3 }|x|\theta^{-1} \right)^2} \mathrm{d} \theta,\quad \theta = \sqrt{C' _3}|x| u\\
& \leq \dfrac{C}{|x|},
\end{align*}
for some positive constant $C$, where we have used that 
$
\int_{0}^{\infty} e^{- \left(\theta - \sqrt{C_2 C' _3 }|x|\theta^{-1} \right)^2} \mathrm{d} \theta = \frac{\sqrt{\pi}}{2}.
$

Next, we estimate  $\nabla_x\Gamma(x)$. By taking the derivative with respect to $x$ in the formula \eqref{equ:EquLapHeat}, we deduce that
\begin{align*}
|\nabla_x \Gamma (x)| \leq \int_{0}^{\infty}|\nabla_{\bar{x}} \Gamma_{1,2}(t,\bar{x})||\Gamma_3(t,x_3)-\frac{1}{2\pi}|\mathrm{d} t + \int_{0}^{\infty}|\Gamma_{1,2}(t,\bar{x})||\partial_{x_3}\Gamma_3(t,x_3)|\mathrm{d} t.
\end{align*}
A simple computation shows that
$
\nabla_{\bar{x}} \Gamma_{1,2}(t,\bar{x}) = \frac{-|\bar{x}|}{8\pi t^2}e^{-|\bar{x}|^2/{4t}},
$
and, thanks to the estimates \eqref{FirstEstHeatKerbis} and \eqref{SecEstHeatKer}, we obtain 
\begin{align*}
|\nabla_x \Gamma (x)|  \leq \dfrac{C_1|\bar{x}|}{8\pi}\int_{0}^{\infty} t^{-5/2}e^{-C_2 t}e^{-C'_3|x|^2/t}\mathrm{d} t + \dfrac{C_1}{4\pi} \int_{0}^{\infty}t^{-2}e^{-C_2 t}e^{-C'_3|x|^2/t}\mathrm{d} t ,
\end{align*}
where $ C'_3 = \min(1,C_3)/4 $. Using $\sup_{\R_+ ^\star}q(t) = q(C'_3|x|^2)$, where $q(t) =t^{-1/2}e^{-C'_3 |x|^2 /{2t}}$, for the first integral on the last line of the previous inequality, we deduce that
\begin{align*}
|\nabla_x \Gamma (x)|  \leq \left(\dfrac{C_1}{8\pi}\dfrac{1}{\sqrt{C'_3 e}} + \dfrac{C_1}{4\pi}\right)\int_{0}^{\infty} t^{-2}e^{-C'_3|x^2|/2t} \mathrm{d}t \leq \dfrac{C}{|x|^2},
\end{align*}
for some positive constant $C$.

Finally, we estimate $D^2_x \Gamma(x)$. By direct computation in \eqref{FirstEstHeatKerbis}, we have
\begin{align*}
|D^2_x \Gamma(x)| &\leq \int_{0}^{\infty} |D^2_{\bar{x}} \Gamma_{1,2}(t,\bar{x})||\Gamma_3(t,x_3)-\frac{1}{2\pi}|\mathrm{d} t + \int_{0}^{\infty} |\nabla_{\bar{x}} \Gamma_{1,2}(t,\bar{x})||\partial_{x_3}\Gamma_3(t,x_3)|\mathrm{d} t \\
&\quad + \int_{0}^{\infty} |\Gamma_{1,2}(t,\bar{x})||\partial^2 _{x_3}\Gamma_3(t,x_3)|\mathrm{d} t.
\end{align*}
Since
$
D^2_{\bar{x}} \Gamma_{1,2}(t,\bar{x}) = \frac{1}{8\pi t^2}\left[-I_2 + \frac{\bar{x}\otimes\bar{x}}{2t} \right]e^{-|\bar{x}|^2/{4t}},
$
 and using the inequalities \eqref{FirstEstHeatKerbis}, \eqref{SecEstHeatKer}, and \eqref{ThirdEstHeatKer}, we deduce that
\begin{align*}
|D^2_x \Gamma(x)| &\leq \dfrac{C_1}{8\pi}\int_{0}^{\infty}t^{-5/2}e^{-C_2 t}e^{-|\bar{x}|^2/{4t}}e^{- C_3 x _3 ^2 /{4t}}\mathrm{d}t \\&\quad +  \dfrac{C_1|\bar{x}|^2}{16\pi}\int_{0}^{\infty}t^{-7/2}e^{-C_2 t}e^{-|\bar{x}|^2/{4t}}e^{- C_3 x _3 ^2 /{4t}}\mathrm{d} t \\
&\quad + \dfrac{C_1|\bar{x}|}{8\pi}\int_{0}^{\infty}t^{-3}e^{-C_2 t}e^{-|\bar{x}|^2/{4t}}e^{- C_3 x _3 ^2 /{4t}}\mathrm{d} t\\
&\quad + \dfrac{C_1}{4\pi}\int_{0}^{\infty}t^{-5/2}e^{-C_2 t}e^{-|\bar{x}|^2/{4t}}e^{- C_3 x _3 ^2 /{4t}}\mathrm{d} t\\
& =: I_1 + I_2 + I_3 + I_4.
\end{align*}
The estimates for $I_1$ and $I_4$ are performed as above. Thus, we obtain 
$
I_1 \leq \frac{C}{|x|^3}$ and $ I_4 \leq \frac{C}{|x|^3}$, for some positive constant $C$.
For the integral $I_3$, we have that
\[
I_3 \leq \dfrac{C_1|x|}{8\pi^2}\int_{0}^{\infty}t^{-3}e^{-C_2 t}e^{-C'_3|x|^2/{t}}\marmd t,\quad C'_3 =\min(1,C_3)/4.
\]
Using again $\sup_{\R_+ ^\star}h(t) = h(C'_3|x|^2)$, where $h(t) =t^{-1/2}e^{-C'_3 |x|^2 /{2t}}$, we obtain that
$
I_3 \leq \frac{C}{|x|^3}
$, for some positive constant $C$. 
Similarly for integral $I_2$, we also have
\begin{align*}
I_2 &\leq  \dfrac{C_1|x|^2}{32\pi^2}\int_{0}^{\infty}t^{-7/2}e^{-C_2 t}e^{-C'_3|x|^2/{t}}\mathrm{d} t,\quad C'_3=\min(1,C_3)/4\\
&\leq \dfrac{C_1|x|^2}{32\pi^2}\dfrac{1}{\sqrt{C'_3 e}|x|} \int_{0}^{\infty}t^{-3}e^{-C_2 t}e^{-C'_3|x|^2 /2t} \mathrm{d} t\\
&\leq \dfrac{C}{|x|^3}.
\end{align*}
Combining the estimates of $I_i$ for $ i=1,...,4$, we obtain the estimate of $D^2_{x}\Gamma(x)$.
\end{proof}

Using  Lemma \ref{PerFundSol}, the $L^\infty$ estimate for the function $\Gamma$ from Lemma \ref{EstFundSol}, and following the same reasoning as in the proof for Poisson's equation in $\R^3$, we can show that the solution of the Poisson equation \eqref{equ:PoiCylinLim} is given by
\begin{equation}
\label{equ:PoissonPoten}
\Phi[n](x) = \Xi \star n(x) = \int_{-\pi}^{\pi}\int_{\R^2}{\Xi(x-y)n(y)}\mathrm{d}\bar y\mathrm{d} y_3 . 
\end{equation} 

\subsection{Estimations for the electric field and its gradient on $\R^2 \times \T^1$}
We now provide some estimates of the electric field $E[n] = -\nabla_x\Phi[n]$, which can be derived by handling the singular term in the fundamental solution $\Xi$, following the approach in \cite{Batt} for the domain $x\in\R^3$ and \cite{ReiRen1993} for $x\in\T^3$.
\begin{lemma}
\label{FirstEstiEle}
Let $n$ be a positive concentration and belongs to $ L^1 \cap L^\infty(\R^2\times \T^1)$. Then, there exists a constant $C>0$ such that the electric field $E[n]$ satisfies the following estimate:
\[
\| E[n] \|_{L^\infty} \leq C (\| n\|_{L^\infty} + \| n\|_{L^1}).
\]
\end{lemma}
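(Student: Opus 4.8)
The plan is to exploit the explicit representation of the solution of the Poisson equation. By \eqref{equ:PoissonPoten} and Lemma \ref{PerFundSol} we write $\Xi = -\frac{1}{4\pi^2}\ln|\bar x| + \Gamma$, and, since $n \in L^1\cap L^\infty$ while the kernel below is locally integrable on the (locally three-dimensional) manifold $\R^2\times\T^1$, we may differentiate under the integral sign to obtain
\[
E[n](x) = -\nabla_x \Phi[n](x) = -\int_{\R^2\times\T^1} \nabla_x \Xi(x-y)\, n(y)\,\md y,
\]
where $x-y$ is understood as an element of $\R^2\times\T^1$; by $2\pi$-periodicity of $\Xi$ in the third variable the estimates of Lemma \ref{EstFundSol} apply with $|x-y|$ the distance on $\R^2\times\T^1$. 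The first step is then the elementary observation that $\nabla_x\bigl(-\tfrac{1}{4\pi^2}\ln|\bar x|\bigr) = -\tfrac{1}{4\pi^2}\,\bar x/|\bar x|^2$ (a vector carried by the $\bar x$-components only), which together with $|\nabla_x\Gamma(x)|\le C/|x|^2$ from Lemma \ref{EstFundSol} yields the pointwise kernel bound
\[
|\nabla_x\Xi(z)| \le \frac{1}{4\pi^2\,|\bar z|} + \frac{C}{|z|^2},\qquad z=(\bar z,z_3)\in\R^2\times\T^1.
\]

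Next I would split the convolution into a \emph{near} and a \emph{far} region and handle the two kernel terms separately, using Fubini's theorem and $n\ge 0$. For the term $1/|\bar z|$: on $\{|\bar x-\bar y|\le 1\}$ bound $n$ by $\|n\|_{L^\infty}$ and use $\int_{|\bar z|\le 1}|\bar z|^{-1}\,\md\bar z = 2\pi$ together with $\int_{\T^1}\md y_3 = 2\pi$; on $\{|\bar x-\bar y|>1\}$ bound $|\bar z|^{-1}\le 1$ and integrate $n$ to get $\|n\|_{L^1}$. For the term $1/|z|^2$: on $\{|x-y|\le 1\}$ bound $n$ by $\|n\|_{L^\infty}$ and use that $\int_{\{|z|\le 1\}}|z|^{-2}\,\md z$ is finite because the unit ball of $\R^2\times\T^1$ is isometric to a Euclidean ball and $|z|^{-2}$ is integrable near the origin in dimension three; on $\{|x-y|>1\}$ bound $|z|^{-2}\le 1$ and integrate $n$ to get $\|n\|_{L^1}$. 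Collecting the four contributions gives $\|E[n]\|_{L^\infty}\le C\bigl(\|n\|_{L^\infty}+\|n\|_{L^1}\bigr)$, as claimed.

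The calculations involved are entirely routine (two elementary radial integrals); the points deserving a little care — and the main, though mild, obstacle — are (i) checking that the estimates of Lemma \ref{EstFundSol}, stated for $\Gamma$ on $\R^2\times\T^1$, can be used inside the convolution via the periodicity of $\Xi$ in $x_3$, and (ii) noticing that the slow decay $1/|\bar z|$ coming from the two-dimensional logarithmic part of $\Xi$ prevents closing the estimate purely in terms of $\|n\|_{L^\infty}$, which is exactly why the $\|n\|_{L^1}$ term is unavoidable in the statement.
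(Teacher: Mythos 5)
Your proposal is correct and follows essentially the same route as the paper: the same splitting of $\nabla_x\Xi$ into the two-dimensional logarithmic part and $\nabla_x\Gamma$, the same near/far decomposition at distance $1$, and the same use of the bound $|\nabla_x\Gamma(x)|\le C/|x|^2$ from Lemma \ref{EstFundSol} (the paper handles your point (i) by shifting the $y_3$-integration to $[x_3-\pi,x_3+\pi]$, which is equivalent to your appeal to periodicity). No gaps.
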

\begin{proof}
For any $x = (\bar{x},x_3)\in \R^2 \times [-\pi,\pi]$, from the formula \eqref{equ:PoissonPoten}, we have
\begin{align*}
\nabla_x \Phi[n](x) = -\dfrac{1}{4\pi^2}\int_{-\pi}^{\pi}\int_{\R^2}{\dfrac{\bar{x} -\bar{y}}{|\bar{x}-\bar{y}|^2} n(y)}\mathrm{d}\bar{y}\mathrm{d}y_3 +  \int_{x_3 -\pi}^{x_3 + \pi}\int_{\R^2}{\nabla_x\Gamma(x-y)\,n(y)}\mathrm{d}\bar{y}\mathrm{d} y_3.
\end{align*}
The first integral in the previous expression can be bounded by
\begin{align*}
&\dfrac{1}{4\pi^2 } \left( \int_{-\pi}^{\pi}\int_{\R^2}{\nabla_{\bar{x}}\ln |\bar{x} - \bar{y}| \mathds{1}_{\left\{ |\bar{x} - \bar{y}|\leq 1 \right\}} n(y)}\mathrm{d}\bar{y}\mathrm{d}y_3 + \int_{-\pi}^{\pi}\int_{\R^2}{\nabla_{\bar{x}}\ln |\bar{x} - \bar{y}| \mathds{1}_{ \left\{ |\bar{x} - \bar{y}|\geq 1 \right\}} n(y)}\mathrm{d}\bar{y}\mathrm{d}y_3 \right)\\
& \leq C (\|n \|_{L^\infty} + \|n \|_{L^1}).
\end{align*}
For the second integral, we make a decomposition of $\R^2\times\T^1$ in the following way
\[
\R^2\times [x_3 -\pi, x_3 + \pi]: = I \cup J ,
\]
where 
$
I= \left\{ y\in \R^3: |x-y| \geq 1 \right\} \cap \R^2\times [x_3 -\pi, x_3 + \pi]   ,
$ and
$
J= \left\{ y\in\R^3: |x-y| \leq 1 \right\}.
$
It is obviously that $J \subseteq \R^2\times [x_3 -\pi, x_3 + \pi] $. Thus, the last integral in the previous equality can be written as:
\[
\int_{x_3 -\pi}^{x_3 + \pi}\int_{\R^2}{\nabla_x\Gamma(x-y)\,n(y)}\mathrm{d}\bar{y}\mathrm{d} y_3 = \int_{I} \nabla_x\Gamma(x-y)\,n(y) \mathrm{d} y + \int_{J}\nabla_x\Gamma(x-y)\,n(y) \mathrm{d} y.
\]
Using Lemma \ref{EstFundSol}, we deduce that
\begin{equation*}
\begin{split}
\int_{x_3 -\pi}^{x_3 + \pi}\int_{\R^2}{\nabla_x\Gamma(x-y)\,n(y)}\mathrm{d}\bar y\mathrm{d} y_3 
&\leq C\left[ \int_{x_3 -\pi}^{x_3 + \pi}\int_{\R^2}{n(y)}\mathrm{d}\bar y\mathrm{d} y_3 + \int_{|x-y|\leq 1}  \dfrac{1}{|x-y|^2} n(y) \mathrm{d} y \right]\\
&\leq C \left[ \int_{-\pi}^{\pi}\int_{\R^2}{n(y)}\mathrm{d}\bar y\mathrm{d} y_3  + 4\pi \| n\|_{L^\infty}\right]\\
&\leq C (\|n\|_{L^1} + \| n\|_{L^\infty}),
\end{split}
\end{equation*}
where we have used that  $\int_{|x-y|\leq 1} \frac{1}{|x-y|^2} \mathrm{d}y =4\pi$. Combining these estimates, we obtain the desired result in Lemma.
\end{proof}
\begin{lemma}
\label{SecondEstEle}
Let $n \in L^1\cap W^{1,\infty}(\R^2\times\T^1)$ and $n\geq 0$. There exists a constant $C>0$ such that the gradient of the electric field $E[n]$ satisfies the following estimates
\[
\| \nabla_x E[n] \|_{L^\infty} \leq C \left( 1 + \| n\|_{L^\infty}(1+\ln ^{+}( \|\nabla_x n \|_{L^\infty})) + \|n\|_{L^1} \right),
\]
where the notation $\ln^+$ stands for the positive part of $\ln$.
\end{lemma}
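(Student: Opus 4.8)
The plan is to start from the explicit potential representation $\Phi[n] = \Xi\star n$ of \eqref{equ:PoissonPoten}, with $\Xi(x) = -\frac{1}{4\pi^2}\ln|\bar x| + \Gamma(x)$ and the derivative bounds $|\nabla_x\Gamma(x)|\le C/|x|^2$, $|D^2_x\Gamma(x)|\le C/|x|^3$ supplied by Lemma \ref{EstFundSol}. Since $\nabla_x E[n] = -D^2_x\Phi[n]$, I split $\Phi[n]=\phi_1+\phi_2$, where $\phi_1$ is the convolution of $n$ with $-\frac{1}{4\pi^2}\ln|\bar x|$ and $\phi_2 = \Gamma\star n$, and I estimate $\|D^2_x\phi_1\|_{L^\infty}$ and $\|D^2_x\phi_2\|_{L^\infty}$ separately. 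Note that $\partial_{x_3}\phi_1\equiv 0$, so only the horizontal block of $D^2_x\phi_1$ is nontrivial, and that $\phi_1$ involves $n$ only through the vertical average $\tilde n(\bar y)=\int_{\T^1}n(\bar y,y_3)\,\md y_3$, for which $\|\tilde n\|_{L^p}\le C\|n\|_{L^p}$ and $\|\nabla_{\bar y}\tilde n\|_{L^\infty}\le C\|\nabla_x n\|_{L^\infty}$.

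The second-derivative kernels — $|\bar x|^{-2}$ for $\phi_1$ and $|x|^{-3}$ for $\phi_2$ — are not locally integrable, so both derivatives cannot be carried under the integral sign. I would fix the evaluation point $x$ and a cutoff scale $\delta\in(0,1]$ and split the domain into a near region and a far region: for $\phi_2$, whose singular set is the single point $y=x$, a ball $\{|x-y|\le\delta\}$; for $\phi_1$, whose singular set is the whole fibre $\{\bar y=\bar x\}$, a cylindrical tube $\{|\bar x-\bar y|\le\delta\}$, every estimate for $\phi_1$ then being organized by the horizontal distance $|\bar x-\bar y|$. On the near region, after a translation which makes the domain independent of $x$, I put one derivative onto $n$; this is legitimate because the first-order kernels $\nabla_x\Gamma$ and $\nabla_{\bar x}\ln|\bar x-\bar y|$ are integrable there, and it produces a contribution $\le C\delta\,\|\nabla_x n\|_{L^\infty}$. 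On the far region I keep both derivatives on the kernel; the Leibniz term produced by the boundary, which moves with $x$, is $\le C\,\|n\|_{L^\infty}$ because the kernel is $O(\delta^{-2})$ there while the boundary has measure $O(\delta)$ for the tube and $O(\delta^2)$ for the sphere, and the remaining bulk term I split at distance one: on $\{\delta<|x-y|<1\}$ (resp. $\{\delta<|\bar x-\bar y|<1\}$) the singular factor integrates to $O(\ln(1/\delta))$, giving $\le C\|n\|_{L^\infty}\ln(1/\delta)$, while on $\{|x-y|>1\}$ (resp. $\{|\bar x-\bar y|>1\}$) the kernel is bounded by a constant, so that this piece is $\le C\|n\|_{L^1}$.

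Adding the contributions gives $\|\nabla_x E[n]\|_{L^\infty}\le C\big(\delta\,\|\nabla_x n\|_{L^\infty}+\|n\|_{L^\infty}(1+\ln(1/\delta))+\|n\|_{L^1}\big)$ for every $\delta\in(0,1]$, and choosing $\delta=(1+\|\nabla_x n\|_{L^\infty})^{-1}$ makes $\delta\|\nabla_x n\|_{L^\infty}\le 1$ and $\ln(1/\delta)=\ln(1+\|\nabla_x n\|_{L^\infty})\le C(1+\ln^{+}\|\nabla_x n\|_{L^\infty})$, which is exactly the claimed log-Lipschitz bound (the $1+$ and the linear $\|n\|_{L^1}$ absorb the various constants, and the case $\|\nabla_x n\|_{L^\infty}\le 1$ is included with no logarithm). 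Equivalently, the $\phi_1$ contribution may simply be quoted from the classical two-dimensional Poisson estimate applied to $\tilde n$, in the spirit of \cite{Batt}.

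I expect the main obstacle to be precisely this borderline logarithmic divergence: $D^2_x\Phi[n]$ is a Calderón–Zygmund-type quantity, not controlled by $\|n\|_{L^\infty}$ alone, so the argument rests on the cutoff-and-optimize mechanism together with careful bookkeeping of the boundary term arising when one differentiates an integral over a domain moving with $x$ — the same step that forces the geometric distinction, ball versus tube, between the $\Gamma$ part and the $\ln|\bar x|$ part of the fundamental solution.
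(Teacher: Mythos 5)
Your proposal is correct and follows essentially the same route as the paper: split the fundamental solution into the two--dimensional logarithmic part (handled by the classical estimate of \cite{Batt}) and the $\Gamma$ part, decompose the $\Gamma$--integral into near/intermediate/far regions with radii $r=\delta$ and $R=1$, obtain the respective contributions $O(r\|\nabla_x n\|_{L^\infty})$, $O(\ln(R/r)\|n\|_{L^\infty})$, $O(\|n\|_{L^1})$ plus an $O(\|n\|_{L^\infty})$ boundary term, and optimize $r=(1+\|\nabla_x n\|_{L^\infty})^{-1}$. The only cosmetic difference is that near the singularity you keep the derivative on $n$ directly, whereas the paper integrates by parts against $n(y)-n(x)$; the two computations are equivalent.
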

\begin{proof}
Observe that the potential $\Phi[n]$ can be expressed as follows:
\begin{align*}
\Phi[n](x) = -\dfrac{1}{4\pi^2}\int_{-\pi}^{\pi} \int_{\R^2}{\ln |\bar{y}| n(\bar{x}-\bar{y},x_3 -y_3)}\mathrm{d}\bar y\mathrm{d} y_3  + \int_{-\pi}^{\pi}\int_{\R^2}{\Gamma(y)\,n(x-y)}\mathrm{d}\bar y\mathrm{d} y_3 ,
\end{align*}
since the functions $\Gamma$ and $n$ are periodic with respect to $x_3$ with a period of $2\pi$. We will now estimate $\partial_{x_1}^2 \Phi[n](x)$. The same approach can be applied in orther cases. Taking the derivative with respect to $x_1$ in the above equality, we obtain
\begin{align*}
\partial_{x_1} \Phi[n](x) = \dfrac{1}{4\pi^2 }\int_{-\pi}^{\pi} \int_{\R^2}{\ln |\bar{x}-\bar{y}| \partial_{y_1} n(\bar{y},y_3)}\mathrm{d}\bar y\mathrm{d} y_3 -\int_{-\pi}^{\pi} \int_{\R^2}{\Gamma(x-y)\partial_{y_1}n(y)}\mathrm{d}\bar y\mathrm{d} y_3,
\end{align*}
which implies that
\begin{align*}
\partial_{x_1}^2 \Phi[n](x) 
&= \dfrac{1}{4\pi^2 }\int_{-\pi}^{\pi} \int_{\R^2}{ \dfrac{x_1 -y_1}{|\bar{x}-\bar{y}|^2} \partial_{y_1} n(\bar{y},y_3)}\mathrm{d}\bar y\mathrm{d} y_3 - \int_{x_3 -\pi}^{x_3 +\pi} \int_{\R^2}{\partial_{x_1}\Gamma(x-y)\partial_{y_1}n(y)}\mathrm{d}\bar y\mathrm{d} y_3\\
&=: K_1 + K_2. 
\end{align*}
The estimation of $K_1$ can be found in \cite{Batt}. We will now estimate $K_2$.
Let $r, R >0$ such that $0< r < R < \infty$ verify the condition
\[
\left\{ y\in\R^3 : |x-y| < R\right\} \subset \R^2 \times [x_3 - \pi, x_3 + \pi].
\]
We then decompose  $\R^2 \times [x_3 - \pi, x_3 + \pi]$ as follows
 \[\R^2 \times [x_3 - \pi, x_3 + \pi] := J_1 \cup J_2 \cup J_3 ,\] where
\[
J_1 =\left\{ y\in\R^3 : |x-y| > R\right\} \cap \R^2 \times [x_3 - \pi, x_3 + \pi] ,
\]
\[
J_2 = \left\{ y\in\R^3 : r < |x-y| < R\right\},\,\,\,\,
J_3 = \left\{ y\in\R^3 : |x-y| < r\right\}.
\]
For the integral over $J_1$, we can use the integration by parts with respect to $y_1$. Noticing that the boundary of $J_1$ is $\partial J_1 = \left\{ y\in\R^3: |x-y| = R\right\} \cup \R^2\times \left\{x_3 -\pi, x_3 +\pi \right\} $, we obtain:
\begin{align}
\label{equ:EquJ1}
&- \int_{J_1}\partial_{x_1}\Gamma(x-y)\partial_{y_1}n(y)\marmd\bar{y}\mathrm{d} y_3 \nonumber \\ &=  \int_{J_1}\partial_{y_1}\partial_{x_1}\Gamma(x-y) n(y) \mathrm{d}\bar{y}\mathrm{d} y_3 - \int_{|x-y|=R} \partial_{x_1}\Gamma(x-y) n(y) \dfrac{-(x_1 - y_1)}{|x-y|}\mathrm{d} \sigma(y)\\
&\quad - \int_{\R^2}{\underbrace{\left[ \partial_{x_1} \Gamma(x-(\bar{y},x_3 +\pi))n(\bar{y},x_3 + \pi) - \partial_{x_1} \Gamma(x-(\bar{y},x_3 -\pi))n(\bar{y},x_3 -\pi) \right]}_{=0}}\mathrm{d}\bar{y}\nonumber.
\end{align}
Similarly, the integral over $J_2$ can be expressed as:
\begin{align}
\label{equ:EquJ2}
&- \int_{J_2}\partial_{x_1}\Gamma(x-y)\partial_{y_1}n(y)\marmd\bar{y}\mathrm{d} y_3 \nonumber\\
&= \int_{J_2}\partial_{y_1}\partial_{x_1}\Gamma(x-y) n(y) \mathrm{d}\bar{y}\mathrm{d} y_3 - \int_{|x-y|= R} \partial_{x_1}\Gamma(x-y) n(y) \dfrac{(x_1 - y_1)}{|x-y|}\mathrm{d} \sigma(y)\\
&\quad - \int_{|x-y|= r} \partial_{x_1}\Gamma(x-y) n(y) \dfrac{-(x_1 - y_1)}{|x-y|}\mathrm{d} \sigma(y)\nonumber.
\end{align}
For the integral over $J_3$, since $\partial_{y_1}n(y) = \partial_{y_1}[n(y)-n(x)]$, we can apply the integration by parts to obtain:
\begin{align}
\label{equ:EquJ3}
&- \int_{J_3}\partial_{x_1}\Gamma(x-y)\partial_{y_1}n(y)\mathrm{d}\bar{y}\mathrm{d} y_3\\
&= \int_{J_3}\partial_{y_1}\partial_{x_1}\Gamma(x-y) [n(y)-n(x)] \mathrm{d}\bar{y}\mathrm{d} y_3 - \int_{|x-y|=r}\partial_{x_1}\Gamma(x-y) [n(y)-n(x)]\dfrac{x_1 - y_1}{|x-y|}\mathrm{d} \sigma(y) \nonumber.
\end{align}
Combining the equalities \eqref{equ:EquJ1}, \eqref{equ:EquJ2}, and \eqref{equ:EquJ3}, we deduce that
\begin{align*}
\partial_{x_1}^2 \Phi[n](x)&=\int_{J_1}\partial_{y_1}\partial_{x_1}\Gamma(x-y) n(y) \mathrm{d}\bar{y}\mathrm{d} y_3 + \int_{J_2}\partial_{y_1}\partial_{x_1}\Gamma(x-y) n(y) \mathrm{d}\bar{y}\mathrm{d} y_3\\
& + \int_{J_3}\partial_{y_1}\partial_{x_1}\Gamma(x-y) [n(y)-n(x)] \mathrm{d}\bar{y}\mathrm{d} y_3  + \int_{|x-y|=r}\partial_{x_1}\Gamma(x-y) n(x)\dfrac{(x_1 - y_1)}{|x-y|}\mathrm{d} \sigma(y)\\
&:= I_1 + I_2 + I_3 + I_4.
\end{align*}
Using Lemma \ref{EstFundSol}, we will estimate the integrals $I_i$ for $i=1,...,4$.
For the integral $I_4$, we apply the $L^\infty$ estimate of $\partial_x \Gamma$ to obtain:
\[
I_4 \leq C \int_{|x-y| =r} \dfrac{1}{|x-y|^2}\marmd \sigma(y) \| n \|_{L^\infty}= 4\pi C \|n\|_{L^\infty}.
\]
For the integral $I_3$, we also obtain an estimate by using the $L^\infty$ estimate of $\partial_x ^2 \Gamma$
\[
I_3 \leq C \int_{|x-y| < r} \dfrac{1}{|x-y|^3}|x-y| \mathrm{d} y \|\nabla_x n\|_{L^\infty} = 2\pi^2 C r\|\nabla_x n\|_{L^\infty}.
\]
Similarly for the integrals $I_2$ and $I_1$, we obtain
\[
I_2 \leq C \int_{r< |x-y| < R} \dfrac{1}{|x-y|^3}\mathrm{d} y \|n\|_{L^\infty} = 2\pi^2 C\ln(R/r)  \|n\|_{L^\infty},
\]
\[
I_1 \leq C \int_{|x-y|>R} \dfrac{1}{|x-y|^3}n(y)\mathrm{d} y \leq \dfrac{C}{R^3}\|n\|_{L^1}.
\]
Finally, combining these estimates of $I_i$ for $i=1,...,4$, we deduce that
\[
K_2 \leq C \left(  \dfrac{1}{R^3}\|n\|_{L^1}+  \ln(R/r)  \|n\|_{L^\infty}+ r\|\nabla_x n\|_{L^\infty}+  \|n\|_{L^\infty}  \right).
\]
Taking $r= \frac{1}{1 + \|\nabla_x n\|_{L^\infty}}$ and $R=1$ gives us the result of the lemma.
\end{proof}
\subsection{Local-in-time existence of smooth solutions}
We will begin by establishing strong solutions for the system \eqref{equ:EquCylinModLim}-\eqref{equ:PoiCylinLim}. It is sufficient to  construct a solution on some time interval $[0,T]$, $T>0$. We present only the main arguments, with the other details left to the reader. We assume that the initial condition $n_{\mathrm{in}}$ satisfies the hypotheses:
\begin{enumerate}
\item[H1)] $n_{\mathrm{in}} \geq 0$,\quad $Be\cdot\nabla_x n_\mathrm{in} =0$,
\item[H2)] $n_{\mathrm{in}}\in W^{1,\infty}(\R^2\times\T^1)\cap W^{1,1}(\R^2\times\T^1)$.
\end{enumerate}

\textbf{Solution integrated along the characteristics}.
By a standard computation, we can rewrite the equation \eqref{equ:EquCylinModLim} as follows:
\begin{equation}
\label{equ:EquCylinModLimBis}
\partial_t n + \left( E \wedge \dfrac{e}{B} \right)\cdot\nabla_x n +  \rot_x \left(\dfrac{e}{B} \right)\cdot \nabla_x n - \rot_x\left( \dfrac{e}{B} \right)\cdot E n =0.
\end{equation}
For any smooth field $E\in L^\infty(0,T;W^{1,\infty}(\R^2\times\T^1))$, we consider the associated characteristics flow of this equation
\begin{align}
\label{equ:EquChaLim}
\left\{
    \begin{array}{ll}
\dfrac{\marmd}{\marmd t}\Pi(t;s,x) = E(t,\Pi(t;s,x))\wedge \dfrac{e(\Pi(t;s,x))}{B(\Pi(t;s,x))} + \rot_x\left( \dfrac{e}{B} \right)(\Pi(t;s,x)),\\
\Pi(s;s,x) = x\in \R^2\times\T^1,
\end{array}
\right.
\end{align}
where $\Pi(t;s,x)$ is the solution of the ODE, $t$ represents the time variable, $s$ is the initial time and $x$ is the initial position. $\Pi(s;s,x) =x$ is our initial condition. Notice that the vector field $\frac{e}{B}$ is also smooth and belongs to $W^{2,\infty}(\R^2\times\T^1)$. Therefore, the characteristics in \eqref{equ:EquChaLim} are well defined for any $(s,x)\in [0,T]\times\R^2\times\T^1$ and there are smooth with respect to $x$. From \eqref{equ:EquChaLim}, the equation \eqref{equ:EquCylinModLimBis} can be written as
\[
\dfrac{\marmd }{\marmd t}n(t,\Pi(t;s,x)) - \rot_x\left(\dfrac{e}{B}\right)(\Pi(t;s,x))\cdot E(t,\Pi(t;s,x))n(t,\Pi(t;s,x)) = 0.
\]
The solution of the transport equation \eqref{equ:EquCylinModLimBis} is given by
\begin{equation}
\label{SolChaPi}
n(t,x) = n_{\mathrm{in}}(\Pi(0;t,x)) \exp\left( \int_{0}^{t}\rot_x\left(\dfrac{e}{B} \right)(\Pi(s;t,x))\cdot E(s,\Pi(s;t,x))\mathrm{d} s \right).
\end{equation}

\textbf{Conservation law on a volume}. 
We have the following conservation law
\begin{align}
\label{ConserVolu}
\int_{\R^2\times\T^1}{n(t,x)} = \int_{\R^2\times\T^1}{n_{\mathrm{in}}(x)},\,\, 0\leq t\leq T.
\end{align}
Indeed, we denote $J(t;s,x)$ is the Jacobian matrix  of $\Pi(t;s,x)$ with respect to $x$ at $(t;s,x)$. The determinant of the Jacobian matrix $J(t;s,x)$ is given by
\begin{align*}
\left\{
\begin{array}{ll}
\dfrac{\mathrm{d}}{\mathrm{d} t} \mathrm{det} \left( J(t;s,x) \right)  = \Divx \left( E(t) \wedge \dfrac{e}{B} + \rot_x \left(\dfrac{e}{B}\right)\right)(\Pi(t;s,x)) \mathrm{det} \left( J(t;s,x) \right),\\
\mathrm{det}(J(t;t,x)) =1.
\end{array}
\right.
\end{align*} 
Hence, we obtain
\[
 \mathrm{det} \left( J(t;s,x) \right) = \exp\left( -\int_{s}^{t}\rot_x\left( \dfrac{e}{B}\right)(\Pi(\theta;s,x))\cdot E(\theta,\Pi(\theta;s,x))\mathrm{d} \theta\right).
\]
Integrating  \eqref{SolChaPi} with respect to $x$ and changing the variable $x$ to $\Pi(t;0,x)$, we obtain \eqref{ConserVolu}.

\textbf{A priori estimates}. We establish here a priori estimates for the solution $n(t, x)$ provided by \eqref{SolChaPi} and its derivative.\\
\textbf{The bound in $L^\infty(0,T;W^{1,\infty}(\R^2\times\T^1))$ of the solutions.}
We have the following bounds for the concentration $n$:
\begin{equation}
\label{NormInfty}
\sup_{t\in[0,T]}\|n(t)\|_{L^\infty(\R^2\times\T^1)} \leq  \|n_{\mathrm{in}}\|_{L^\infty(\R^2\times\T^1)}  \exp(C_B T\sup_{t\in [0,T]}\|E(t)\|_{L^\infty}),
\end{equation}
\begin{equation}
\label{GradNormInfty}
\begin{split}
\|\nabla_x n(t)\|_{L^\infty(\R^2\times\T^1)} \leq &( \|n_{\mathrm{in}}\|_{L^\infty} +  \exp( C_B T(1 + \sup_{t\in[0,T]}\|E(t)\|_{W^{1,\infty}}) )  \|\nabla_x n_{\mathrm{in}}\|_{L^\infty} )\\
&\exp(C_B T(1+\sup_{t\in[0,T]}\|E(t)\|_{W^{1,\infty}})),
\end{split}
\end{equation}
where $C_B$ is a positive constant depending on $B$.

The estimate \eqref{NormInfty} is standard and follows directly from the formula \eqref{SolChaPi}.
We will now prove \eqref{GradNormInfty}. To do this, we estimate $\sup_{t\in [0,T]}\|\nabla_x \Pi(0;t,\cdot)\|_{L^\infty}$. By differentiating with respect to $x$ in \eqref{equ:EquChaLim}, and applying Grönwall's inequality, we obtain
\begin{equation}
\label{GradCharPi}
\|\nabla_x \Pi(0;t,\cdot)\|_{L^\infty} \leq \exp( (1+ C_B) t(1 + \sup_{t\in[0,T]}\|E(t)\|_{W^{1,\infty}}) ),\,\, t\in[0,T].
\end{equation}
Next, by taking the derivative with respect to $x$ in \eqref{SolChaPi}, and using the inequality \eqref{GradCharPi}, we obtain \eqref{GradNormInfty} through straightforward computations.\\
\textbf{The bound in $L^\infty(0,T;W^{1,1}(\R^2\times\T^1))$ of the solutions}. We have
\begin{equation}
\label{NormL1}
\|n(t)\|_{L^1} = \| n_{\mathrm{in}}\|_{L^1},\,\, t\in[0,T],
\end{equation}
\begin{equation}
\label{GradNormL1}
\|\nabla_x n(t)\|_{L^1} \leq \exp( C_B t(1 + \sup_{t\in[0,T]}\|E\|_{W^{1,\infty}}))( \|\nabla n_{\mathrm{in}}\|_{L^1} +  t C_B \sup_{t\in[0,T]}\|E(t)\|_{W^{1,\infty}} \|n_{\mathrm{in}}\|_{L^1} ),
\end{equation}
where $C_B$ is a positive constant depending on $B$. The estimate \eqref{GradNormL1} is derived by first taking the derivative with respect to $x$ in \eqref{SolChaPi}, then integrating with respect to $x$ and changing the variable $x$ to $\Pi(t;0,x)$. Finally, we apply the inequality \eqref{GradCharPi}.


\textbf{Local existence of regular solutions}.
We define the following set of the electric fields:
\[
\Sigma : = \left\{ E\in L^\infty(0,T;W^{1,\infty}(\R^2\times\T^1)): \sup_{[0,T]}\|E(t)\|_{L^\infty}\leq M_1,\quad \sup_{[0,T]}\|\nabla_x E(t) \|_{L^\infty} \leq M_2  \right\},
\]
where $M_i, i=1, 2$ are two constants to be fixed later. Assume an electric field $E$ in $\Sigma$. We consider the solution by characteristic of the equation \eqref{equ:EquCylinModLimBis} on $\R^2\times\T^1$, corresponding to the electric field $E$ and denote by $n^{E}$ which is given by the formula \eqref{SolChaPi}. First, we will show that $n^E$ satisfies \eqref{equ:constraint_n} when it holds at initial time.
\begin{pro} Assume that $n_\mathrm{in}$ satisfies $H1$ and $H2$, and that an electric field $E\in \Sigma$ is given. Then the solution $n^E$ of \eqref{equ:EquCylinModLimBis}, as given by \eqref{SolChaPi}, satisfies the condition \eqref{equ:constraint_n}.
\end{pro}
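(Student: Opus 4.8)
The plan is to read the constraint off the representation formula \eqref{SolChaPi}, by showing that the characteristic flow $\Pi$ of \eqref{equ:EquChaLim} commutes with the flow $X(\tau;\cdot)$ of the magnetic field $Be\cdot\nabla_x$ and that the exponential amplitude in \eqref{SolChaPi} is itself constant along the magnetic lines. First I would record that the electric fields relevant to the construction are $E=-\nabla_x\Phi$ with $\Phi=\Phi[m]$ for a concentration $m\in\ker(Be\cdot\nabla_x)$, so that Section~\ref{Cylin} gives $\Phi[m]\in\ker(Be\cdot\nabla_x)$ and hence $Be\cdot E=0$. Then I would compute directly from $Be=(x_2,-x_1,1)$ that $\rot_x(e/B)=-\tfrac{2}{B^{4}}\,Be$; in particular $\rot_x(e/B)\cdot E=-\tfrac{2}{B^{4}}(Be\cdot E)=0$, so the exponential factor in \eqref{SolChaPi} equals $1$ and $n^{E}(t,x)=n_{\mathrm{in}}(\Pi(0;t,x))$.

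The heart of the matter is then to show that $W:=E\wedge\tfrac{e}{B}+\rot_x(e/B)$, the transport field of \eqref{equ:EquChaLim}, is invariant under the magnetic flow $X$, equivalently $[Be,W]=0$, equivalently $\Pi(0;t,X(\tau;x))=X(\tau;\Pi(0;t,x))$. For $\rot_x(e/B)=-\tfrac{2}{B^{4}}Be$ this is immediate, since $B^{2}=x_1^{2}+x_2^{2}+1$ is constant along $X$ and $Be$ is transported by its own flow. For $E\wedge\tfrac{e}{B}$ I would use that the Jacobian $\partial_xX(\tau;x)$ is a proper rotation (block-diagonal with blocks $\mathcal R(-\tau)$ and $1$), under which cross products are equivariant; that $\tfrac{e}{B}=\tfrac{Be}{B^{2}}$ is $X$-invariant as a vector field (again because $B$ and the direction of $Be$ are); and that $E=-\nabla_x\Phi$ with $\Phi$ constant along $X$ is $X$-invariant as a vector field, which follows from the identity ${}^{t}\partial_xX(\tau;x)(\nabla_x\Phi)(X(\tau;x))=\nabla_x\Phi(x)$ together with the orthogonality of $\partial_xX(\tau;x)$. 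Combining these, $W$ is $X$-invariant, so the two flows commute.

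The conclusion is then immediate: since $n_{\mathrm{in}}$ is constant along $X$,
\[
n^{E}(t,X(\tau;x))=n_{\mathrm{in}}\big(\Pi(0;t,X(\tau;x))\big)=n_{\mathrm{in}}\big(X(\tau;\Pi(0;t,x))\big)=n_{\mathrm{in}}(\Pi(0;t,x))=n^{E}(t,x),
\]
and differentiating in $\tau$ at $\tau=0$ gives $Be\cdot\nabla_x n^{E}(t,x)=0$, which is \eqref{equ:constraint_n}. I expect the only real obstacle to be conceptual rather than computational: one must recognize that, under the hypothesis $\Phi[m]\in\ker(Be\cdot\nabla_x)$, both the transport field $W$ of \eqref{equ:EquChaLim} and the coefficient $\rot_x(e/B)\cdot E$ in \eqref{SolChaPi} are carried along the magnetic lines. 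This is precisely where the cylindrical form of $Be$ (through $\rot_x(e/B)\parallel Be$) and the reduction of Section~\ref{Cylin} ($m\in\ker(Be\cdot\nabla_x)\Rightarrow\Phi[m]\in\ker(Be\cdot\nabla_x)$) enter; for an electric field not enjoying this invariance the argument would fail, so the care needed is in keeping track of the fact that the electric fields produced by the iteration do.
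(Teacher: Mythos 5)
Your route is genuinely different from the paper's. The paper never touches the characteristics: it introduces the penalized equation \eqref{equ:modifies_n} with the stiff term $\nu^{-1}Be\cdot\nabla_x n^\nu$, uses the uniform $L^q$ bounds to extract a weak-$\star$ limit $\bar n$, multiplies by $\nu_k$ to show $\bar n(t)\in\mathrm{ker}(Be\cdot\nabla_x)$, checks that $\bar n$ solves \eqref{equ:EquCylinModLimBis} against test functions in the kernel, and identifies $\bar n=n^E$ by uniqueness. Your proof is instead a direct verification: the identity $\rot_x(e/B)=-2B^{-4}Be$ is correct for $Be=(x_2,-x_1,1)$, the amplitude in \eqref{SolChaPi} is trivial once $Be\cdot E=0$, and the flow of \eqref{equ:EquChaLim} commutes with the magnetic flow because each piece of the transport field is invariant under the (orthogonal, cross-product--equivariant) Jacobian of $X$. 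Where it applies, your argument is more transparent than the compactness argument and makes explicit exactly which features of the cylindrical geometry are being used; the paper's penalization is designed to avoid invoking any structure of $E$ at all (though its own final step, deducing $\bar n=n^E$ from uniqueness when $\bar n$ is only known to solve the equation against kernel test functions, is not airtight either).

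The gap, which you half-acknowledge in your last sentences, is that your proof does not establish the proposition as stated. The hypothesis is only $E\in\Sigma$, and $\Sigma$ is defined purely by the bounds $\sup_t\|E(t)\|_{L^\infty}\leq M_1$ and $\sup_t\|\nabla_xE(t)\|_{L^\infty}\leq M_2$; membership in $\Sigma$ gives you neither $E=-\nabla_x\Phi$ with $\Phi\in\mathrm{ker}(Be\cdot\nabla_x)$, nor $Be\cdot E=0$, nor the $X$-invariance of $E$ as a vector field, and all three enter essentially in your argument. They can genuinely fail inside $\Sigma$: for $E=(\sin x_3,0,0)$ the coefficient $\rot_x(e/B)\cdot E=-2B^{-4}x_2\sin x_3$ is not constant along the magnetic lines, so the exponential factor in \eqref{SolChaPi} already destroys the mechanism. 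Moreover, importing the needed structure from the fixed-point scheme is circular as you present it: the iterate $\mathcal{F}(E)$ derives from a potential in $\mathrm{ker}(Be\cdot\nabla_x)$ only because $n^E$ satisfies \eqref{equ:constraint_n}, which is the very statement being proved. To make your approach complete you must either restate the proposition with the structural hypothesis on $E$ built into $\Sigma$, or run an explicit induction along the iteration (initializing with $E^0=-\nabla_x\Phi[n_{\mathrm{in}}]$, say, and propagating the invariance from one iterate to the next). As it stands, the proposal proves a correct but strictly weaker statement than the one claimed.
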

\begin{proof}
For any $\nu>0$, there is a unique solution of
\begin{align}
\label{equ:modifies_n}
\left\{
\begin{array}{ll}
\partial_t n^\nu + \left( E \wedge \dfrac{e}{B} \right)\cdot\nabla_x n^\nu +  \rot_x \left(\dfrac{e}{B} \right)\cdot \nabla_x n^\nu - \rot_x\left( \dfrac{e}{B} \right)\cdot E n^\nu + \dfrac{Be}{\nu}\cdot\nabla_x n^\nu =0,\\
n^\nu (0,x) = n_\mathrm{in}(x).
\end{array}
\right.
\end{align}
The solution is given by
\[
n^\nu(t,x) = n_\mathrm{in}(Z^\nu(0;t,x))\exp\left( \int_{0}^{t}\rot_x\left(\dfrac{e}{B} \right)(Z^\nu(s;t,x))\cdot E(s,Z^\nu(s;t,x))\mathrm{d} s \right),
\]
where $Z^\nu$ are the characteristics corresponding to the field $E\wedge \frac{e}{B} + \rot_x\left( \frac{e}{B}\right)+ \frac{Be}{\nu}$. Using \eqref{NormInfty} and \eqref{NormL1}, we obtain that
$
\|n^\nu\|_{L^\infty(0,T;L^q(\R^2\times\T^1))}
$
is uniformly bounded with respect to $\nu$, for $q\in [1,\infty]$. Therefore, we can extract a sequence $\nu_k$ converging towards $0$ such that $n^{\nu_k} \rightharpoonup \bar n$ weakly $\star$ in $L^\infty(0,T;L^q(\R^2\times\T^1))$ for some function $\bar n\in L^\infty(0,T;L^q(\R^2\times\T^1))$. Multiplying \eqref{equ:modifies_n} by $\nu_k$ and passing to the limit as $k\to\infty$ in the sense of distributions with the test function $\psi\in C^\infty_c([0,T)\times\R^2\times\T^1)$, one easily gets that $\bar n(t)\in\mathrm{ker}(Be\cdot\nabla_x)$, $t\in[0,T]$. If $\psi\in \mathrm{ker}(Be\cdot\nabla_x)$, the singular term in \eqref{equ:modifies_n} vanishes in the distributional sense, and by passing to the limit for $k\to +\infty$, we deduce that $\bar n$ satisfies \eqref{equ:EquCylinModLimBis} in the distributional sense with the test fuction in $C^\infty_c\cap \mathrm{ker}(Be\cdot\nabla_x)$. Since $n^E$ is the unique solution of \eqref{equ:EquCylinModLimBis}, thus, we have $n^E =\bar n$ and $n^E$ verifies \eqref{equ:constraint_n}.
\end{proof}

Now, we construct the following map $\calF$ on $\Sigma$, whose fixed point gives the solution of the system \eqref{equ:EquCylinModLim}-\eqref{equ:PoiCylinLim}, at least locally in time such solutions exist
\begin{equation*}
\label{MapFixPoint}
E \to \calF(E) = \int_{\R^2\times\T^1}{\nabla_x\Xi(x-y)n^E(t,y)}\mathrm{d}y.
\end{equation*}
We will prove that the map $\calF$ is left invariant on the set $\Sigma$ for a convenient choice of the constants $M_1$ and $M_2$. Then, we aim to establish an estimate such as:
\begin{equation}
\label{MapContract}
\|\calF E(t) - \calF\tilde{E}(t)\|_{L^{\infty}} \leq C_T \int_{0}^{t} \|E(s) -\tilde{E}(s)\|_{L^\infty}\mathrm{d} s,\quad E, \tilde{E}\in\Sigma,\, t\in[0,T],
\end{equation}
for some constant $C_T$, not depending on $E$ and $\tilde{E}$. After that, the existence of the system \eqref{equ:EquCylinModLim}-\eqref{equ:PoiCylinLim} immediately, based on the construction of an iterative method for $\calF$.
\begin{lemma}
\label{ClosedSet}
There exist positive constants $M_1$,  $M_2$ and $T=T(M_1, M_2)$ such that $\calF(\Sigma) \subset \Sigma$.
\end{lemma}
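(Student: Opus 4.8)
The plan is to feed the a priori bounds \eqref{NormInfty}, \eqref{GradNormInfty}, \eqref{NormL1}, \eqref{GradNormL1} for $n^E$ into the electric‐field estimates of Lemma \ref{FirstEstiEle} and Lemma \ref{SecondEstEle}, and to fix $M_1$ and $M_2$ first (in terms of the initial data only), and only afterwards choose $T$ small enough so that the growth coming from the exponential factors $e^{C_B T(\cdot)}$ is absorbed. Note that $\calF(E)(t,\cdot) = -\nabla_x\Phi[n^E(t)] = E[n^E(t)]$, and that by the representation formula \eqref{SolChaPi} the solution $n^E$ is nonnegative and lies in $L^\infty(0,T;L^1\cap W^{1,\infty}(\R^2\times\T^1))$, so that both lemmas indeed apply to $n^E(t)$.

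First I would treat the $L^\infty$ part. For $E\in\Sigma$ one has $\sup_{[0,T]}\|E(t)\|_{L^\infty}\le M_1$, so \eqref{NormInfty} and \eqref{NormL1} give $\|n^E(t)\|_{L^\infty}\le \|n_{\mathrm{in}}\|_{L^\infty}e^{C_BTM_1}$ and $\|n^E(t)\|_{L^1}=\|n_{\mathrm{in}}\|_{L^1}$. Lemma \ref{FirstEstiEle} then yields $\|\calF(E)(t)\|_{L^\infty}\le C(\|n_{\mathrm{in}}\|_{L^\infty}e^{C_BTM_1}+\|n_{\mathrm{in}}\|_{L^1})$. Choosing $M_1:=2C(\|n_{\mathrm{in}}\|_{L^\infty}+\|n_{\mathrm{in}}\|_{L^1})$ and then imposing $e^{C_BTM_1}\le 2$ (that is $T\le \ln 2/(C_BM_1)$) gives $\|\calF(E)(t)\|_{L^\infty}\le M_1$ for all $t\in[0,T]$.

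Next comes the gradient bound, which is the delicate one. Setting $a:=C_BT(1+M_1+M_2)$, so that $\sup_{[0,T]}\|E(t)\|_{W^{1,\infty}}\le M_1+M_2$, estimate \eqref{GradNormInfty} gives $\|\nabla_x n^E(t)\|_{L^\infty}\le e^a(\|n_{\mathrm{in}}\|_{L^\infty}+e^a\|\nabla_x n_{\mathrm{in}}\|_{L^\infty})\le e^{2a}(\|n_{\mathrm{in}}\|_{L^\infty}+\|\nabla_x n_{\mathrm{in}}\|_{L^\infty})$, hence $\ln^+\|\nabla_x n^E(t)\|_{L^\infty}\le 2a+\ln^+\big(\|n_{\mathrm{in}}\|_{L^\infty}+\|\nabla_x n_{\mathrm{in}}\|_{L^\infty}\big)+\ln 2$. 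Inserting this into Lemma \ref{SecondEstEle}, together with $\|n^E(t)\|_{L^\infty}\le 2\|n_{\mathrm{in}}\|_{L^\infty}$ from the first step, one obtains
\[
\|\nabla_x\calF(E)(t)\|_{L^\infty}\le C\Big(1+2\|n_{\mathrm{in}}\|_{L^\infty}\big(1+2a+\ln 2+\ln^+(\|n_{\mathrm{in}}\|_{L^\infty}+\|\nabla_x n_{\mathrm{in}}\|_{L^\infty})\big)+\|n_{\mathrm{in}}\|_{L^1}\Big).
\]
If I additionally require $T(1+M_1+M_2)\le 1$, then $a\le C_B$, and the right‑hand side is bounded by a finite constant depending only on $B$ and the initial data — in particular \emph{not} on $M_2$. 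Fixing $M_2$ equal to (an upper bound of) that constant, and then picking any $T=T(M_1,M_2)>0$ with $T\le \ln 2/(C_BM_1)$ and $T\le 1/(1+M_1+M_2)$, we get $\calF(\Sigma)\subset\Sigma$. The main obstacle is exactly the circular appearance of $M_2$ in the gradient estimate: one must exploit the fact that, thanks to the logarithmic loss in Lemma \ref{SecondEstEle}, the dependence of $\|\nabla_x\calF(E)\|_{L^\infty}$ on $M_2$ enters only through $\ln e^{C_BT(1+M_1+M_2)}=C_BT(1+M_1+M_2)$, which is rendered harmless by shrinking $T$ — the standard Osgood/Batt device for Vlasov–Poisson-type well-posedness. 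All the remaining steps are routine bookkeeping of the constants.
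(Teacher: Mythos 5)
Your proof is correct and follows essentially the same route as the paper: fix $M_1$ from the $L^\infty$ estimate, exploit the logarithmic dependence in Lemma \ref{SecondEstEle} so that $M_2$ enters only through the product $T(1+M_1+M_2)$, fix $M_2$ from initial data, and then shrink $T$. The only differences are cosmetic choices of constants (the paper takes $T=1/(C_B(M_1+M_2))$ rather than your two separate smallness conditions on $T$).
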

\begin{proof}
Let $E\in \Sigma$. Thanks to Lemma \ref{FirstEstiEle} and the formulas \eqref{NormInfty} and \eqref{NormL1}, we have
 \begin{align*}
 \|\calF(E)(t,\cdot) \|_{L^\infty} &\leq C( \|n_{\mathrm{in}}\|_{L^\infty}  \exp(C_B T\sup_{t\in [0,T]}\|E(t)\|_{L^\infty})  + \|n_{\mathrm{in}}\|_{L^1} )\\
 &\leq C(\|n_{\mathrm{in}}\|_{L^\infty} + \|n_{\mathrm{in}}\|_{L^1}) \exp(C_B T\sup_{t\in [0,T]}\|E(t)\|_{L^\infty} +1).
 \end{align*}
Here, we fix $M_1$ as a constant such that $C \exp(2){(\|n_{\mathrm{in}}\|_{L^\infty} + \|n_{\mathrm{in}}\|_{L^1})} \leq M_1$, and we choose $T = \frac{1}{C_B( M_1 +M_2)}$. Hence, we obtain
$
\sup_{t\in[0,T]}\|\calF(E)(t,\cdot) \|_{L^\infty} \leq  M_1.  
$
Consequently, the bound of $L^\infty$ norm for the density $n(t)$ in \eqref{NormInfty} becomes
\begin{equation}
\label{NormInftyBis}
\| n(t)\|_{L^\infty} \leq \exp(1) \|n_{\mathrm{in}}\|_{L^\infty}.
\end{equation}
It remains to estimate $\| \nabla_x \calF(E)(t,\cdot)\|_{L^\infty}$. Thanks to Lemma \ref{SecondEstEle}, we need to estimate $\ln^+ (\|\nabla_x n(t)\|)$. Using the formula \eqref{GradNormInfty}, we have
\begin{align*}
\ln^+(\| \nabla_x n(t) \|_{L^\infty}) 
&\leq \ln^+ (\|n_{\mathrm{in}}\|_{W^{1,\infty}}(1+ \exp( C_B T(1 + \sup_{t\in[0,T]}\|E(t)\|_{W^{1,\infty}}) )) )\\
&\quad + C_B T (1+\sup_{t\in[0,T]}\|E(t)\|_{W^{1,\infty}})\\
&\leq \ln^+(\|n_{\mathrm{in}}\|_{W^{1,\infty}})+ 1 + 2C_B T(1+\sup_{t\in[0,T]}\|E(t)\|_{W^{1,\infty}}).
\end{align*}
Thus, together with \eqref{NormInftyBis}, we deduce that
\begin{align*}
\| \nabla_x \calF(E)(t,\cdot)\|_{L^\infty} &\leq 2C(1+ \exp(1) \|n_{\mathrm{in}}\|_{L^\infty}(2+ \ln^+(\|n_{\mathrm{in}}\|_{W^{1,\infty}}))+ \|n_{\mathrm{in}}\|_{L^1})\\ &\quad (1+ C_B T \sup_{t\in[0,T]}\|E(t)\|_{W^{1,\infty}}).
\end{align*}
Here, we fix $M_2$ as a constant such that $2C(1+ \exp(1)\|n_{\mathrm{in}}\|_{L^\infty}(2+ \ln^+(\|n_{\mathrm{in}}\|_{W^{1,\infty}}))+ \|n_{\mathrm{in}}\|_{L^1}) \leq \frac{M_2}{2}$ and we take $T = \frac{1}{C_B( M_1 +M_2)}$. Hence, we get
$
\| \nabla_x \calF(E)(t,\cdot)\|_{L^\infty} \leq \dfrac{M_2}{2}2 = M_2.
$
\end{proof}

We establish the inequality \eqref{MapContract}. Let us consider $E$,  $\tilde{E}\in\Sigma$ and denote by $n^{E}$ and $\tilde{n}^{\tilde{E}}$ the characteristic solutions of \eqref{equ:EquCylinModLimBis} with the same initial data $n_\mathrm{in}$, respectively corresponding to the electric fields $E$ and $\tilde{E}$. It can be easily seen from Lemma \ref{FirstEstiEle} that
\[
\| \calF(E)(t) -\calF(\tilde{E})(t)\|_{L^\infty} \leq C (\|n^E(t)-\tilde{n}^{\tilde{E}}(t)\|_{L^\infty} + \|n^E(t) - \tilde{n}^{\tilde{E}}(t)\|_{L^1}),
\]
where the constant $C$ is not depend on $E$ and $\tilde{E}$. Thus, the inequality \eqref{MapContract} follows from the application of thefollowing lemmas, whose proofs are similar to Lemmas 5.4 and 5.5 in \cite{BosTuan}, and are left to the reader.
\begin{lemma}
\label{DiffNormInfty}
We have 
\[
\|n^E(t)-\tilde{n}^{\tilde{E}}(t)\|_{L^\infty} \leq C \int_{0}^{t}\| E(s) - \tilde{E}(s)\|_{L^\infty}\mathrm{d} s ,
\]
for some positive constant $C$, not depending on $E,\tilde{E}$.
\end{lemma}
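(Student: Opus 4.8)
The plan is to work directly from the representation formula \eqref{SolChaPi}, comparing $n^E$ and $\tilde n^{\tilde E}$ through the characteristics on which they are built. Denote by $\Pi$ and $\tilde\Pi$ the flows of \eqref{equ:EquChaLim} associated, respectively, with the fields $E\wedge\frac{e}{B}+\rot_x(\frac{e}{B})$ and $\tilde E\wedge\frac{e}{B}+\rot_x(\frac{e}{B})$. The first step is a Grönwall estimate for the discrepancy of the two flows. Subtracting the corresponding versions of \eqref{equ:EquChaLim}, the right-hand side splits into the term $(E-\tilde E)(s,\Pi)\wedge\frac{e}{B}(\Pi)$, bounded by $\|\frac{e}{B}\|_{L^\infty}\,\|E(s)-\tilde E(s)\|_{L^\infty}$, plus the differences $\tilde E(s,\Pi)\wedge\frac{e}{B}(\Pi)-\tilde E(s,\tilde\Pi)\wedge\frac{e}{B}(\tilde\Pi)$ and $\rot_x(\frac{e}{B})(\Pi)-\rot_x(\frac{e}{B})(\tilde\Pi)$, both Lipschitz in their argument with constants controlled by $M_1$, $M_2$ and the $W^{2,\infty}$-norm of $\frac{e}{B}$. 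Integrating and applying Grönwall's lemma yields
\[
\sup_{x\in\R^2\times\T^1}\bigl|\Pi(s;t,x)-\tilde\Pi(s;t,x)\bigr|\leq C\int_0^t\|E(\sigma)-\tilde E(\sigma)\|_{L^\infty}\,\mathrm{d}\sigma,\qquad 0\leq s\leq t\leq T,
\]
with $C=C(M_1,M_2,T,B)$ independent of the particular $E,\tilde E\in\Sigma$; in particular this holds for the backward endpoints $s=0$.

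Then I would insert this into \eqref{SolChaPi}. Setting $A(t,x)=\int_0^t\rot_x(\frac{e}{B})(\Pi(s;t,x))\cdot E(s,\Pi(s;t,x))\,\mathrm{d}s$ and writing $\tilde A$ for the analogous quantity built from $\tilde\Pi,\tilde E$, one has
\[
n^E(t,x)-\tilde n^{\tilde E}(t,x)=\bigl[n_{\mathrm{in}}(\Pi(0;t,x))-n_{\mathrm{in}}(\tilde\Pi(0;t,x))\bigr]e^{A(t,x)}+n_{\mathrm{in}}(\tilde\Pi(0;t,x))\bigl[e^{A(t,x)}-e^{\tilde A(t,x)}\bigr].
\]
For the first bracket I would use the regularity hypothesis H2 on $n_{\mathrm{in}}$, i.e. $n_{\mathrm{in}}\in W^{1,\infty}$, to bound it by $\|\nabla_x n_{\mathrm{in}}\|_{L^\infty}\,|\Pi(0;t,x)-\tilde\Pi(0;t,x)|$. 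Since $\rot_x(\frac{e}{B})$ is bounded and $\|E(s)\|_{L^\infty},\|\tilde E(s)\|_{L^\infty}\leq M_1$ on $\Sigma$, the exponents $A,\tilde A$ are bounded uniformly on $[0,T]$, so $e^A,e^{\tilde A}$ are bounded and $|e^A-e^{\tilde A}|\leq C\,|A-\tilde A|$. Finally $|A-\tilde A|$ is estimated by decomposing $\rot_x(\frac{e}{B})(\Pi)\cdot E(s,\Pi)-\rot_x(\frac{e}{B})(\tilde\Pi)\cdot\tilde E(s,\tilde\Pi)$ into a piece proportional to $\|E(s)-\tilde E(s)\|_{L^\infty}$ and a piece proportional to $|\Pi(s;t,x)-\tilde\Pi(s;t,x)|$, using that $\rot_x(\frac{e}{B})$ is bounded and Lipschitz and $\tilde E$ is bounded and Lipschitz uniformly over $\Sigma$; then one inserts the flow estimate of the first step and integrates in $s$ over $[0,t]$. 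Collecting the contributions gives the asserted inequality, with $C$ depending on $n_{\mathrm{in}}$, $B$, $M_1$, $M_2$, $T$ but not on $E,\tilde E$.

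The main obstacle is the bookkeeping in the Grönwall step: one must keep the constant uniform over $\Sigma$ and observe that the inhomogeneity driving the Grönwall inequality is itself the time integral $\int_0^t\|E-\tilde E\|_{L^\infty}$, which then has to propagate without loss through the exponential factors appearing in \eqref{SolChaPi}. Beyond this there is no conceptual difficulty; the computation runs parallel to Lemmas 5.4--5.5 of \cite{BosTuan}, and the $L^1$ analogue needed for \eqref{MapContract} follows by the same scheme together with the change of variables $x\mapsto\Pi(t;0,x)$ and the Jacobian identity already exploited for \eqref{ConserVolu}.
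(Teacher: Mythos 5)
Your argument is correct and follows exactly the route the paper intends: the paper omits the proof, referring to Lemmas 5.4--5.5 of \cite{BosTuan}, which is precisely this standard scheme of a Gr\"onwall estimate for the difference of the backward characteristics driven by $\int_0^t\|E(s)-\tilde E(s)\|_{L^\infty}\,\mathrm{d}s$, followed by splitting the difference of the representation formulas \eqref{SolChaPi} into a $\nabla_x n_{\mathrm{in}}$ term and an exponential-factor term, each controlled uniformly over $\Sigma$. No gaps.
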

\begin{lemma}
\label{DiffNormL1}
We have 
\[
\|n^E(t)-\tilde{n}^{\tilde{E}}(t)\|_{L^1} \leq C \int_{0}^{t}\| E(s) - \tilde{E}(s)\|_{L^\infty}\mathrm{d} s ,
\]
for some positive constant $C$, not depending on $E,\tilde{E}$.
\end{lemma}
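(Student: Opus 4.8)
The plan is to prove the $L^1$ estimate by an energy (Gr\"onwall) argument on the difference $w := n^E - \tilde n^{\tilde E}$, in the same spirit as Lemma \ref{DiffNormInfty} but now integrating in $x$ instead of tracking the characteristic flow. For $E,\tilde E\in\Sigma$ the fields $E\wedge\frac{e}{B}+\rot_x(\frac{e}{B})$ and $\tilde E\wedge\frac{e}{B}+\rot_x(\frac{e}{B})$ are Lipschitz (note $\frac{e}{B}=\frac{(x_2,-x_1,1)^t}{1+x_1^2+x_2^2}$ and $\rot_x(\frac{e}{B})$ are bounded and smooth), so $n^E$ and $\tilde n^{\tilde E}$ given by \eqref{SolChaPi} are Lipschitz in $x$, belong to $L^\infty(0,T;W^{1,\infty}\cap W^{1,1}(\R^2\times\T^1))$, and solve \eqref{equ:EquCylinModLimBis} a.e. Subtracting the two equations and writing $n^E-\tilde n^{\tilde E}=w$ in the zeroth order terms, $w$ satisfies, with $V^E:=E\wedge\frac{e}{B}+\rot_x(\frac{e}{B})$,
\[
\partial_t w + V^E\cdot\nabla_x w - \Big(\rot_x\big(\tfrac{e}{B}\big)\cdot E\Big)w = g,\qquad w(0,\cdot)=0,
\]
where $g := -\big((E-\tilde E)\wedge\frac{e}{B}\big)\cdot\nabla_x\tilde n^{\tilde E} + \big(\rot_x(\frac{e}{B})\cdot(E-\tilde E)\big)\tilde n^{\tilde E}$.

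The key step is a cancellation in the $L^1$ balance. Renormalizing (legitimate since $w$ is Lipschitz in $x$ and $V^E$ is Lipschitz), $|w|$ satisfies $\partial_t|w| + \Divx(V^E|w|) = \big(\Divx V^E + \rot_x(\frac{e}{B})\cdot E\big)|w| + \mathrm{sgn}(w)\,g$. Since $E=-\nabla_x\Phi[n^E]$ is curl-free, $\Divx(E\wedge\frac{e}{B}) = \frac{e}{B}\cdot\rot_x E - E\cdot\rot_x(\frac{e}{B}) = -E\cdot\rot_x(\frac{e}{B})$, whereas $\Divx\rot_x(\frac{e}{B})=0$; hence $\Divx V^E + \rot_x(\frac{e}{B})\cdot E = 0$. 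Integrating over $\R^2\times\T^1$ — the flux $\int\Divx(V^E|w|)$ vanishes by $x_3$-periodicity and the decay of $w\in W^{1,1}$ together with the boundedness of $V^E$ — we obtain $\frac{\mathrm{d}}{\mathrm{d}t}\|w(t)\|_{L^1} \le \|g(t)\|_{L^1}$.

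It remains to bound the source. Since $\frac{e}{B}$ and $\rot_x(\frac{e}{B})$ are bounded, $\|g(t)\|_{L^1} \le C\,\|E(t)-\tilde E(t)\|_{L^\infty}\big(\|\nabla_x\tilde n^{\tilde E}(t)\|_{L^1} + \|\tilde n^{\tilde E}(t)\|_{L^1}\big)$. By the a priori bounds \eqref{NormL1} and \eqref{GradNormL1}, and since $\tilde E\in\Sigma$ forces $\sup_{[0,T]}\|\tilde E\|_{W^{1,\infty}}$ to be bounded by a constant depending only on $M_1,M_2$, one gets $\sup_{[0,T]}\|\tilde n^{\tilde E}\|_{W^{1,1}(\R^2\times\T^1)} \le C_T$ with $C_T$ independent of $\tilde E$. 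Hence $\frac{\mathrm{d}}{\mathrm{d}t}\|w(t)\|_{L^1} \le C_T\|E(t)-\tilde E(t)\|_{L^\infty}$, and integrating in time from $0$, using $w(0)=0$, yields the claimed inequality.

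The only genuinely delicate point is the cancellation $\Divx V^E + \rot_x(\frac{e}{B})\cdot E = 0$, which hinges precisely on $E$ being a gradient field; everything else (the renormalization, the integrability of the flux term, and the uniform-in-$\Sigma$ control of $\|\tilde n^{\tilde E}\|_{W^{1,1}}$) is routine. An alternative route, closer to Lemma \ref{DiffNormInfty}, would start from the representation \eqref{SolChaPi}, split $n^E-\tilde n^{\tilde E}$ into a flow-mismatch term and an amplitude-mismatch term, and estimate each in $L^1$ after a change of variables along the flow of the $\tilde E$-characteristics; this works as well, but it requires uniform upper and lower bounds on the Jacobians of the interpolating maps, which the energy argument above avoids.
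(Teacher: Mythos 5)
Your Eulerian argument is sound and genuinely different from the route the paper intends: the text defers this lemma to Lemmas 5.4--5.5 of \cite{BosTuan}, i.e.\ the characteristics-based splitting you describe as the ``alternative route'' (compare the two flows and the exponential amplitude factors in \eqref{SolChaPi}, then change variables using the explicit formula for $\det J$). Your renormalized $L^1$ balance avoids that Jacobian bookkeeping entirely, at the price of having to justify the renormalization of $|w|$ and the vanishing of the flux integral; both are legitimate here given the $W^{1,1}\cap W^{1,\infty}$ regularity of $n^E,\tilde n^{\tilde E}$ and the boundedness of $V^E$ on $\R^2\times\T^1$, and your source estimate via \eqref{NormL1}, \eqref{GradNormL1} correctly gives a constant depending only on $M_1,M_2,T$ and the initial data.

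One inaccuracy: you assert that the cancellation $\Divx V^E+\rot_x(e/B)\cdot E=0$ ``hinges precisely on $E$ being a gradient field,'' invoking $E=-\nabla_x\Phi[n^E]$. But in this lemma $E$ is an arbitrary element of $\Sigma$, which is defined only by $L^\infty$ bounds on $E$ and $\nabla_x E$ and contains non-gradient fields; the contraction estimate \eqref{MapContract} must hold for all such $E,\tilde E$ (only the image $\calF(E)$ is automatically a gradient). So $\rot_x E$ need not vanish and the zeroth-order coefficient does not cancel in general. This is harmless rather than fatal: the leftover term is $\bigl(\tfrac{e}{B}\cdot\rot_x E\bigr)|w|$, bounded by $C(1+M_2)|w|$ since $e/B$ is bounded, and Gr\"onwall then yields $\|w(t)\|_{L^1}\le e^{C(1+M_2)T}\int_0^t\|g(s)\|_{L^1}\,\mathrm{d}s$, which still produces the stated inequality with a constant independent of $E,\tilde E$. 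You should drop the curl-free claim and run the Gr\"onwall step instead.
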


\textbf{Uniqueness of regular solutions}. The uniqueness of regular solution $n(t, x)$, which belongs to $L^\infty(0,T; W^{1,1}(\R^2\times\T^1 ) \cap W^{1,\infty}(\R^2\times\T^1 ))$, is immediately derived from the
inequality \eqref{MapContract} and Gronwall’s inequality.

Based on the preious arguments, we establish the following result:
\begin{pro}
Assume that the initial condition $n_{\mathrm{in}}$ satisfies the hypotheses $H1$ and $H2$. There exists $T>0$ and a local time strong solution $(n,E)$ on $[0,T]$ for the limit model \eqref{equ:EquCylinModLim}-\eqref{equ:PoiCylinLim} with the initial data $n_\mathrm{in}$. The solution is unique and satisfies
\begin{align*}
n\geq 0,\,\, n\in L^{\infty}(0,T;W^{1,\infty}(\R^2\times\T^1))\cap L^\infty(0,T;W^{1,1}(\R^2\times\T^1)),\\
E\in L^\infty(0,T;W^{1,\infty}(\R^2\times\T^1)).
\end{align*} 
\end{pro}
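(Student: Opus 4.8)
The plan is to obtain $(n,E)$ as a fixed point of the map $\calF$ on the set $\Sigma$, assembling the ingredients already in place. First I would freeze the constants $M_1$, $M_2$ and the time $T=1/(C_B(M_1+M_2))$ exactly as in the proof of Lemma \ref{ClosedSet}, so that $\calF(\Sigma)\subset\Sigma$. Note that $\Sigma$, being bounded in $W^{1,\infty}(\R^2\times\T^1)$, consists of equi-Lipschitz fields, which is precisely what makes the characteristic system \eqref{equ:EquChaLim} globally solvable and $C^1$ in $x$, hence the formula \eqref{SolChaPi} meaningful. I would then set up a Picard iteration: start from $E^{0}=E[n_\mathrm{in}]\in\Sigma$ (belonging to $\Sigma$ by Lemmas \ref{FirstEstiEle}, \ref{SecondEstEle}, using $H1$, $H2$), and define $E^{k+1}=\calF(E^k)$; by Lemma \ref{ClosedSet} the whole sequence remains in $\Sigma$, and the associated densities $n^{E^k}$ are uniformly bounded in $L^\infty(0,T;W^{1,\infty}\cap W^{1,1}(\R^2\times\T^1))$ thanks to \eqref{NormInfty}, \eqref{GradNormInfty}, \eqref{NormL1}, \eqref{GradNormL1}.

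Next I would prove convergence. Iterating the contraction estimate \eqref{MapContract} gives, by induction, $\sup_{[0,T]}\|E^{k+1}(t)-E^k(t)\|_{L^\infty}\le \frac{(C_TT)^k}{k!}\sup_{[0,T]}\|E^{1}-E^{0}\|_{L^\infty}$, so $(E^k)$ is Cauchy in $C([0,T];L^\infty(\R^2\times\T^1))$ and converges to some $E$. Since each $E^k$ lies in $\Sigma$ and the bounds defining $\Sigma$ are stable under weak-$\star$ limits in $L^\infty(0,T;W^{1,\infty})$, the limit $E$ belongs to $\Sigma$. Using the stability Lemmas \ref{DiffNormInfty} and \ref{DiffNormL1} (applied to the pair $E^k$, $E^\ell$), the densities $n^{E^k}$ are Cauchy, hence convergent, in $L^\infty(0,T;L^1\cap L^\infty(\R^2\times\T^1))$; their limit is $n:=n^{E}$, which is non-negative, satisfies $n(0)=n_\mathrm{in}$ by \eqref{SolChaPi}, lies in $L^\infty(0,T;W^{1,\infty}\cap W^{1,1})$ by lower semicontinuity of the norms, and verifies the constraint $Be\cdot\nabla_x n=0$ by the preceding proposition applied with the field $E\in\Sigma$.

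Then I would check that $(n,E)$ genuinely solves the system. By construction $n=n^{E}$ solves the transport equation \eqref{equ:EquCylinModLimBis}, equivalently \eqref{equ:EquCylinModLim}, along the characteristics of the field $E\wedge\frac{e}{B}+\rot_x(\frac{e}{B})$; passing to the limit $k\to\infty$ in $E^{k+1}=\calF(E^k)$ in $C([0,T];L^\infty)$ yields $E=\calF(E)=-\nabla_x(\Xi\star n)=-\nabla_x\Phi[n]$ in view of \eqref{equ:PoissonPoten}, i.e. $-\Delta_x\Phi[n]=n$ on $\R^2\times\T^1$. For uniqueness, if $(n_1,E_1)$ and $(n_2,E_2)$ are two strong solutions with the same data on $[0,T]$, then $E_i=\calF(E_i)$, so \eqref{MapContract} gives $\|E_1(t)-E_2(t)\|_{L^\infty}\le C_T\int_0^t\|E_1(s)-E_2(s)\|_{L^\infty}\,\mathrm{d}s$, whence $E_1\equiv E_2$ by Gronwall and $n_1\equiv n_2$ by \eqref{SolChaPi} (or Lemmas \ref{DiffNormInfty}, \ref{DiffNormL1}).

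The delicate point is the nonlinear closure together with the choice of an iteration time $T$ that does not degenerate along the iteration. The map $\calF$ is only a contraction in the weak norm $C([0,T];L^\infty)$, not in the $W^{1,\infty}$ norm carried by $\Sigma$, so one must iterate \emph{in} $\Sigma$ (to retain the $W^{1,\infty}$ bound needed for the characteristics) while proving Cauchyness in the weaker norm; the weak-$\star$ closedness of $\Sigma$ then transfers the strong $L^\infty$ limit into $\Sigma$. The quantitative input making all the constants $M_1$, $M_2$, $T$ mutually consistent is the log-type estimate of Lemma \ref{SecondEstEle}, $\|\nabla_x E[n]\|_{L^\infty}\lesssim 1+\|n\|_{L^\infty}(1+\ln^{+}\|\nabla_x n\|_{L^\infty})+\|n\|_{L^1}$, combined with the Gronwall bound \eqref{GradNormInfty} for $\|\nabla_x n(t)\|_{L^\infty}$: the logarithm is precisely what prevents a double-exponential blow-up and allows $T=1/(C_B(M_1+M_2))$ to be fixed once and for all. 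The remaining steps are routine and left to the reader, as stated.
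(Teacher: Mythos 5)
Your proposal is correct and follows essentially the same route as the paper: a Picard iteration for the map $\calF$ on the invariant set $\Sigma$ (Lemma \ref{ClosedSet}), convergence in the weak norm via the contraction estimate \eqref{MapContract} built on Lemmas \ref{DiffNormInfty} and \ref{DiffNormL1}, identification of the fixed point with the solution of \eqref{equ:EquCylinModLim}--\eqref{equ:PoiCylinLim}, and uniqueness by Gronwall. The paper leaves these assembly steps implicit, and your write-up fills them in consistently with its intended argument.
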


\end{document}